\documentclass[reqno]{amsart}%
\usepackage{palatino, mathpazo}
\usepackage{amsfonts}
\usepackage{amsmath}
\usepackage{amssymb,latexsym,xcolor}
\usepackage{graphicx}
\usepackage{harpoon}
\usepackage[mathscr]{eucal}
\usepackage{amssymb}%
\usepackage[%backref=page,
linktocpage=true,colorlinks,citecolor=magenta,linkcolor=blue,urlcolor=magenta]{hyperref}
\hypersetup{
 colorlinks=true,
 linkcolor=blue,
 citecolor=blue,
 urlcolor=blue,
 pdftitle={Exact counting of spherical metrics with one conical singularity on rectangular tori},
 pdfsubject={Exact counting of spherical metrics with one conical singularity on rectangular tori},
 pdfkeywords={Rectangular torus, spherical metric, Lam\'e equation,
 Hill discriminant, commutator trace formula}
}

\providecommand{\U}[1]{\protect \rule{.1in}{.1in}}

\newtheorem{theorem}{Theorem}[section]

\newtheorem{corollary}[theorem]{Corollary}

\newtheorem{lemma}[theorem]{Lemma}
\newtheorem{proposition}[theorem]{Proposition}
\newtheorem{Theorem}{Theorem}

\theoremstyle{remark}
\newtheorem{remark}[theorem]{Remark}

\numberwithin{equation}{section}

\newcommand{\C}{\mathbb C}
\newcommand{\R}{\mathbb R}
\newcommand{\N}{\mathbb N}
\newcommand{\ii}{\mathrm i}
\newcommand{\e}{\mathrm e}
\newcommand{\tr}{\operatorname{tr}}
\newcommand{\dd}{\,\mathrm d}

\begin{document}
\title[Exact counting of spherical metrics]{Exact counting of spherical metrics with one conical singularity on rectangular tori}
%{A rectangular torus admits exactly $n$ spherical metrics with a cone singularity of angle  $2\pi\vartheta$ for any $\vartheta\in (2n-1, 2n+1)$}

\author{Zhijie Chen}
\address{Department of Mathematical Sciences, Yau Mathematical Sciences Center,
Tsinghua University, Beijing, 100084, China }
\email{zjchen2016@tsinghua.edu.cn}
\author{Shihong Zhang}
\address{Yau Mathematical Sciences Center,
Tsinghua University, Beijing, 100084, China}
\email{shihong-zhang@tsinghua.edu.cn}

%\date{August 2026}
\subjclass[2020]{35J61, 34B30, 53C21}
\keywords{Rectangular torus, spherical metric, Lam\'e equation,
 Hill discriminant, commutator trace formula}

\begin{abstract}
We prove that for every integer $n\geq 2$ and
$8\pi(n-1)<\rho<8\pi n$, the singular Liouville equation
$\Delta u+\e^u=\rho\delta_0$ on a rectangular torus $E_{\ii b}=\mathbb{C}/(\mathbb Z+\ii b\mathbb Z)$ has exactly $n$ solutions, which are all axisymmetric. 
Together with previous results by Chen-Lin and Lin-Wang, this yields that 
\begin{itemize}
\item $E_{\ii b}$ admits no spherical metrics with a conical singularity of angle $2\pi\vartheta$ as long as $\vartheta$ is a positive odd integer.
\item For every integer $n\geq 1$, $E_{\ii b}$ admits exactly $n$ spherical metrics with a conical singularity of angle $2\pi\vartheta$ for each $\vartheta\in (2n-1, 2n+1)$.
\end{itemize}
The basic idea is to prove that the linearized equation
has only trivial solutions in the space of axisymmetric functions. The previous method of analysing nodal domains via Bol's isoperimetric inequality only works for $\rho\leq 8\pi$. We develop a unified approach for all $\rho\in (0,+\infty)\setminus 8\pi\mathbb{N}_{\geq 1}$ by exploring the deep connection with the monodromy of the classical Lam\'{e} equation.
\end{abstract}

\maketitle

\section{Introduction and main results}\label{sec:introduction}

 \subsection{Main results}
Let $E_{\tau}:=\mathbb{C}/(\mathbb Z+\mathbb Z\tau)$ be a flat torus in the plane, where $\tau \in\mathbb{C}$ with $\operatorname{Im}\tau>0$. Consider the following singular Liouville equation with parameter $\rho>0$
\begin{equation}\label{eq:mfe}
 \Delta u+\e^u=\rho\delta_0\qquad\text{on }E_{\tau}.
\end{equation}
Here we use the complex variable $z=x+iy$, $\Delta=\partial_x^2+\partial_y^2=4\partial^2_{\bar z z}$ is the Laplace operator, and $\delta_{0}$ is the Dirac measure at $0$. Define
\begin{equation}\label{eq:parameters}
 \eta:=\frac{\rho}{8\pi},\qquad
 \vartheta:=2\eta+1.
\end{equation}

Equation \eqref{eq:mfe} is related to various research subjects. First,
 \eqref{eq:mfe} has its origin in the prescribed curvature problem in conformal geometry. Given any solution $u(z)$ of \eqref{eq:mfe}, the new metric
\begin{equation}\label{eq:metric}
 g=\frac12\e^u|\dd z|^2
\end{equation}
has constant Gaussian curvature $K_g=1$ on $E_{\tau}\setminus\{0\}$, hence is a spherical metric with a conical singularity at $0$ of angle $2\pi\vartheta$.  Spherical metrics with conical singularities on compact Riemann surfaces have received great interest for a long time; see e.g. \cite{BMM, Kuo, MZ-IMRN, MP-IMRN, MP-GAFA, WWX} and the references therein for recent developments on this subject. Equation \eqref{eq:mfe} is also a special case of the following general mean field equations on a compact Riemann surface $M$:
\begin{equation}\label{eq-01}
\Delta u+\rho\left(\frac{he^u}{\int he^u}-\frac{1}{|M|}\right)=4\pi\sum_{j=1}^n\alpha_j\left(\delta_{p_j}-\frac{1}{|M|}\right),
\end{equation}
where $h$ is a positive function on $M$, $\alpha_j>-1$ for all $j$ and $\rho>0$. Equation \eqref{eq-01} arises not only from geometry, but also from
 statistical physics as the equation for
the mean field limit of the Euler flow in Onsager's vortex model (cf.
\cite{CLMP}), hence its name. Equation \eqref{eq-01} has been widely studied, and we refer the readers to \cite{BCLT, BartolucciTarantelloCMP, CM, ChenLinCPAM, DKM, JWY, MR, Tarantello} and the references therein for this subject. Besides, equation \eqref{eq:mfe} was shown to be related to the self-dual condensation of the Chern-Simons-Higgs model (cf. \cite{CLW4, LY, NT1}).

The existence of structure of solutions for equation \eqref{eq:mfe} is very challenging from
the viewpoints of geometry and PDE,
and has been studied extensively by Chai-Lin-Wang \cite{ChaiLinWang}, Chen-Lin \cite{ChenLinAJM, ChenLinJDG}, Chen-Lin-Wang \cite{CLW4}, Eremenko \cite{Eremenko}, Eremenko-Gabrielov \cite{EG}, Eremenko-Mondello-Panov \cite{EMP}, Lin \cite{LinJDG} and Lin-Wang \cite{LinWangAnnals, LinWang, LinWangJEP}. 
Note that
\begin{equation}\label{ulog}
 u(z)=4\eta\log|z|+v(z)\quad\text{near $z=0$,}
\end{equation}
where $v$ is $C^2$ at $0$ by the elliptic regularity theory \cite{BM,GilbargTrudinger}. Let \(G(z)\) be the Green function on $E_{\tau}$ with the singularity at $0$, defined by
\begin{equation}\label{eq:Green}
 -\Delta G(z)=\delta_0-\frac{1}{|E_{\tau}|},\qquad \int_{E_{\tau}}G\dd x\dd y=0.
\end{equation}
Then
\[
 G(z)=-\frac1{2\pi}\log|z|+R(z)\quad\text{near $z=0$}
\]
with \(R\) smooth. Thus, $u+\rho G$ is $C^2$ on $E_\tau$.
By applying the compactness result \cite[Corollary 4]{BartolucciTarantelloCMP} of Bartolucci-Tarantello to \eqref{eq:mfe}, it is known that solutions of \eqref{eq:mfe} have uniform a priori estimates in $C^2(E_{\tau})$.

\begin{Theorem}\label{thm A} \cite[Corollary 4]{BartolucciTarantelloCMP}
For any closed interval $K\Subset (0,+\infty)\setminus 8\pi \mathbb{N}_{\geq 1}$, there exists a constant $C_{K}$ such that for any solution $u(z)$ of \eqref{eq:mfe} with $\rho\in K$, 
$$\big\|u+\rho G\big\|_{C^2(E_{\tau})}\leq C_{K}.$$
\end{Theorem}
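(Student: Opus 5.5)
We argue by contradiction combined with a blow-up analysis. Set $w:=u+\rho G$, which is $C^2$ on $E_\tau$ and solves
\[
\Delta w+|E_\tau|^{-1}\rho=\e^{-\rho G}\e^{w}=:h\,\e^{w}\qquad\text{on }E_\tau .
\]
Near $0$ we have $h=|z|^{4\eta}\e^{-\rho R}$. Integrating \eqref{eq:mfe} over $E_\tau$ gives $\int_{E_\tau}\e^u=\rho$, so the total mass is fixed. Therefore \eqref{eq:mfe} is an instance of \eqref{eq-01} with a single singular source of strength $\alpha=2\eta$ and $M=E_\tau$.

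Suppose no bound $C_K$ exists. Then there are $\rho_k\in K$ and solutions $u_k$ with $\|w_k\|_{C^2}\to\infty$. Passing to a subsequence, $\rho_k\to\rho_\ast\in K$, so $\rho_\ast\notin 8\pi\mathbb N_{\ge1}$.

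\textbf{Step 1: reduction to an $L^\infty$ bound.} An upper bound $\sup w_k\le C$ yields a $C^2$ bound. Indeed, $\Delta w_k$ is then bounded in $L^\infty$, and we normalise using $\int h\e^{w_k}=\rho_k$. This bounds the oscillation, hence $\|w_k\|_{L^\infty}$ by Jensen-type arguments, and then $C^{2}$ by Schauder estimates applied to the H\"older continuous right-hand side. So we may assume $\max w_k\to+\infty$, i.e.\ blow-up occurs.

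\textbf{Step 2: Brezis--Merle alternative.} By the Brezis--Merle alternative (with the Li--Shafrir and Bartolucci--Tarantello extensions to singular weights), the blow-up set $S$ is finite. Moreover $h\e^{w_k}\rightharpoonup\sum_{p\in S}m_p\delta_p$, and $w_k\to-\infty$ uniformly on compacts of $E_\tau\setminus S$. The total mass is therefore $\rho_\ast=\sum_p m_p$.

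\textbf{Step 3: local mass quantization.} This is the key input and the main obstacle. At a regular blow-up point $p\neq0$, Li--Shafrir's quantization together with Li's Harnack-type estimate gives $m_p=8\pi$. At the singular point $p=0$, the local mass is $8\pi(1+\alpha)=8\pi(1+2\eta)$ when $\alpha=2\eta$ is not an integer, by classification of entire solutions of $\Delta v+|z|^{2\alpha}\e^v=0$ with finite mass. When $\alpha$ is an integer, non-radial bubbles may appear. Bartolucci--Tarantello's sharp estimates show the local mass there is still $8\pi(1+\alpha)$, possibly with additional multiples of $8\pi$ from concentration at distinct sub-scales.

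Hence $\rho_\ast=8\pi N$, or $\rho_\ast=8\pi(1+2\eta_\ast)+8\pi N'$ with $\eta_\ast=\rho_\ast/8\pi$. In the latter case $\rho_\ast=8\pi+2\rho_\ast+8\pi N'$, which is impossible since $\rho_\ast>0$. So blow-up at $0$ is excluded outright, and blow-up away from $0$ forces $\rho_\ast\in8\pi\mathbb N_{\ge1}$. This contradicts $\rho_\ast\in K$ and completes the argument.

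I expect the delicate step to be the exclusion of residual mass, i.e.\ showing that no $L^1$ mass is lost in necks between bubbles. For regular points I would cite the sup+inf / Harnack estimate. For the singular point I would directly invoke \cite{BartolucciTarantelloCMP}.
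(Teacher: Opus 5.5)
Your argument is correct and follows essentially the paper's route. The paper simply applies Bartolucci--Tarantello's compactness theorem, whose proof is exactly this Brezis--Merle alternative plus mass quantization, and the only problem-specific check is the one you make: a blow-up at $0$ would carry mass $8\pi(1+2\eta_\ast)=8\pi+2\rho_\ast>\rho_\ast$, so the critical set reduces to $8\pi\mathbb N_{\ge 1}$.

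Two minor corrections. First, the regularized equation should read $\Delta w+h\,\e^{w}=\rho/|E_\tau|$; you have the sign flipped. Second, no neck or residual-mass analysis is needed on a closed surface: the Green representation gives $w_k-\bar w_k\to\sum_{p\in S}m_pG(\cdot-p)$ in $C^1_{\mathrm{loc}}(E_\tau\setminus S)$, and the local Pohozaev identity then forces $m_p=8\pi$ for $p\neq0$ and $m_0=8\pi(1+2\eta_\ast)$ exactly. There are no extra multiples of $8\pi$ at $0$, whether or not $2\eta_\ast$ is an integer.
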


Let $n\in\mathbb{N}_{\geq 1}$ and $\rho\in (8\pi (n-1), 8\pi n)$. Then by Theorem \ref{thm A}, the topological Leray-Schauder degree $d_{\rho}$ of \eqref{eq:mfe} is well-defined. Chen-Lin \cite[Theorem 1.1]{ChenLinCPAM} proved an explicit degree counting formula for the general mean field equation \eqref{eq-01}, which has the following consequence for \eqref{eq:mfe}.

\begin{Theorem}\cite[Theorem 1.1]{ChenLinCPAM} \label{thm B}
Let $n\in\mathbb{N}_{\geq 1}$ and $\rho\in (8\pi (n-1), 8\pi n)$. Then $d_{\rho}=n>0$, so \eqref{eq:mfe} always has solutions.
\end{Theorem}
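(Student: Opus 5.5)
This statement is the Chen--Lin degree formula for the general mean field equation \eqref{eq-01}, specialized to the torus with one singular point. I would not reprove it from scratch. The plan is to recall the structure of the Chen--Lin argument, then check that the specialization gives $d_\rho=n$.

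\emph{Step 1: the degree is locally constant.} By Theorem \ref{thm A}, solutions of \eqref{eq:mfe} with $\rho\in K\Subset(0,+\infty)\setminus 8\pi\mathbb N_{\geq1}$ are uniformly bounded in $C^2$ after subtracting $\rho G$. Hence the Leray--Schauder degree of the map $w\mapsto w-\rho\,\Delta^{-1}(\ldots)$, written in the normalized mean-field form \eqref{eq-01}, is well defined. By homotopy invariance it is constant on each interval $(8\pi(n-1),8\pi n)$. The value for $\rho\in(0,8\pi)$ is $1$: for small $\rho$ the equation is a small perturbation of $-\Delta w=\text{const}$, which has a unique nondegenerate solution.

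\emph{Step 2: the jump across $8\pi(n-1)$.} As $\rho$ crosses $8\pi k$, blow-up occurs. Since the singular source has $\alpha=2\eta>0$ at $0$ and $\rho=8\pi k$ cannot be matched by blow-up at $0$ alone unless the quantization permits it, the blow-up points are $k$ regular points. Chen--Lin compute the jump of the degree through a refined local analysis of bubbling solutions. The jump equals (up to sign) the degree of a finite-dimensional reduced map on configurations of $k$ points. Summing over $k$ gives their generating-function formula
\[
\sum_{k\ge 0} d_{\rho_k} x^k=(1+x+x^2+\cdots)^{-\chi(M)+N}\prod_{j=1}^N(1-x^{1+\alpha_j}),
\]
where $\rho_k\in(8\pi k,8\pi(k+1))$ and $\alpha_j$ is non-integer. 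The one-point torus case has $\chi=0$ and $N=1$. The weight $\alpha=\rho/(4\pi)$ depends on $\rho$, so one must redo the bookkeeping with $\alpha$ varying. The point is that the factor $1-x^{1+\alpha}$ contributes only at powers $x^{k}$ with $k>1+\alpha$. Since $\rho=4\pi\alpha\in(8\pi(n-1),8\pi n)$, we have $\alpha>2(n-1)\ge n-1$, so this factor is irrelevant for the coefficient $x^{n-1}$. That coefficient is therefore the coefficient of $(1-x)^{-1}\cdot(1-x)^{-1}$, namely $n$.

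\emph{Main obstacle.} The main obstacle is Step 2: the sharp local estimates for bubbling sequences and the identification of the jump. That is exactly the content of \cite{ChenLinCPAM}, so I would cite it. The real check is that the singularity's exponent, which moves with $\rho$, never reaches a quantized value in the open interval. Such a crossing would produce an additional jump. Since $\rho$ never equals $8\pi k$ inside the interval, the a priori bound of Theorem \ref{thm A} already excludes it. Positivity $d_\rho=n>0$ then gives existence.
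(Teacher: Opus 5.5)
Your route is the same as the paper's: cite Chen--Lin and specialize to the one-point torus with $\alpha=\rho/(4\pi)$. The paper gives no further proof. Your observation that the singular critical values $8\pi k+8\pi(1+\alpha)=8\pi(k+1)+2\rho$ always exceed $\rho$, so that only $\rho\in 8\pi\mathbb N$ matters, is correct and is what the specialization needs.

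\textbf{The gap is in the one computation that carries the proof.} You state the formula as $\sum_k d_{\rho_k}x^k=(1+x+x^2+\cdots)^{-\chi(M)+N}\prod_j(1-x^{1+\alpha_j})$, i.e.\ the degree is the coefficient of $x^k$. For $\chi=0$, $N=1$ this is $(1-x)^{-1}(1-x^{1+\alpha})$, and its coefficient of $x^{n-1}$ is $1$, not $n$. The factor $(1-x)^{-1}\cdot(1-x)^{-1}$ you then use does not follow from the formula you wrote. The same reading also fails basic sanity checks:
\begin{itemize}
\item for the torus without sources ($N=0$) it gives degrees $1,0,0,\ldots$ instead of the known $d_\rho\equiv 1$;
\item for $S^2$ it gives $1,-2,1,0,\ldots$ instead of $1,-1,0,\ldots$;
\item it gives $d_\rho=1$ on $(8\pi,16\pi)$, whereas the paper itself notes $d_\rho=2$ there.
\end{itemize}

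\textbf{The fix is the correct convention.} Write $g(x)=1+b_1x^{n_1}+b_2x^{n_2}+\cdots$ with $n_1<n_2<\cdots$, for the same generating function you wrote with exponent $-\chi(M)+N$. Chen--Lin's degree for $8\pi n_k<\rho<8\pi n_{k+1}$ is the partial sum $b_0+b_1+\cdots+b_k$, not the single coefficient $b_k$. When all exponents below $\rho/(8\pi)$ are integers, this partial sum is the coefficient of $x^k$ in $g(x)/(1-x)$. That is exactly where your second factor $(1-x)^{-1}$ comes from.

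With this convention, take $\rho\in(8\pi(n-1),8\pi n)$. The exponents of $g(x)=(1-x)^{-1}(1-x^{1+\alpha})$ lying below $\rho/(8\pi)=\alpha/2$ are $0,1,\ldots,n-1$, each with coefficient $1$. The negative terms only start at $x^{1+\alpha}$, and $1+\alpha>\alpha/2$. Hence $d_\rho=n$. Restate the formula with the partial-sum convention and your argument goes through.
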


On the other hand, the critical case $\rho\in 8\pi \mathbb{N}_{\geq 1}$ is quite different, and whether solutions of \eqref{eq:mfe} exist or not
essentially depends on the geometry of $E_{\tau}$, or equivalently, the choice of $\tau$. This
phenomenon was first discovered by Lin-Wang \cite{LinWangAnnals} when
they studied 
\begin{equation}
\Delta u+e^{u}=8\pi \delta_{0}\  \  \text{on}\ E_{\tau}. \label{eq1-1}%
\end{equation}
Note that the three half periods of the torus are always critical points of the Green function $G(z)$ because $G(z)$ is even.
Lin-Wang \cite{LinWangAnnals} proved that $G(z)$ has either three or five critical points, and
\eqref{eq1-1} has solutions if and only if $G(z)$ has five critical points.
For example, when $\tau \in \ii\mathbb{R}_{>0}$, i.e., $E_{\tau}$
is a rectangular torus, equation \eqref{eq1-1} has no solutions;
while for $\tau=\frac{1}{2}+\frac{\sqrt{3}}{2}\ii$, i.e., $E_{\tau}$ is a rhombus
torus, equation \eqref{eq1-1} has infinitely many solutions. 
For the general case $\rho=8\pi n$ with $n\in\mathbb{N}_{\geq 2}$, 
\begin{equation}
\Delta u+e^{u}=8n\pi \delta_{0}\  \  \text{on}\ E_{\tau}, \label{eq0-2}%
\end{equation}
Chai-Lin-Wang \cite{ChaiLinWang} and subsequently Lin-Wang \cite{LinWangJEP} studied \eqref{eq0-2}
from the viewpoint of algebraic geometry, and developed a theory to connect
\eqref{eq0-2} with hyperelliptic curves and modular forms. In particular, they proved that once \eqref{eq0-2} has a solution, then \eqref{eq0-2} has infinitely many solutions. Define
\begin{align}\label{eq-en}\mathcal{E}_n:=
\{\tau\in\mathbb C\;:\; \operatorname{Im}\tau>0,\; \text{equation \eqref{eq0-2} has solutions}\}.
\end{align}
Recently, Lin \cite{LinJDG} proved for $\mathcal{E}_n\neq \emptyset$ is a unbounded open set for any $n\in\mathbb N_{\geq 1}$.

On the other hand, Lin-Wang proved in \cite[Theorem 1.4]{LinWang} that the solution of \eqref{eq:mfe} is unique for any $\rho\in (0,8\pi)$. A natural but challenging problem is how many solutions \eqref{eq:mfe} might have when $\rho\in (8\pi (n-1), 8\pi n)$ for $n\in\mathbb{N}_{\geq 2}$.   
Theorem \ref{thm B} shows that the Leray-Schauder degree $d_{\rho}=n$ for such $\rho$. However, generally the degree does not reflect the actual number of solutions. For example, for $\tau\in\mathcal{E}_1$, Lin-Wang  proved in \cite[Theorem 1.4]{LinWang} that \eqref{eq:mfe} has four solutions for $\rho-8\pi>0$ small (Note the degree $d_{\rho}=2$ for such $\rho$), while Chai-Lin-Wang proved in \cite[Corollary 3.5.1]{ChaiLinWang} that \eqref{eq:mfe} with $\rho=12\pi$ has at most two solutions. These results indicates that the bifurcation diagram of \eqref{eq:mfe} is already very complicate for $\rho$ varying from $8\pi$ to $12\pi$.
Therefore, to count the exact number of solutions of \eqref{eq:mfe} seems impossible for general torus when $\rho\in (8\pi (n-1), 8\pi n)$ with $n\in\mathbb{N}_{\geq 2}$. The best result on this aspect is \cite[Corollary 2]{Eremenko}, where Eremenko proved that the number of solutions is finite for arbitrary torus and $\rho\in (0,+\infty)\setminus8\pi\mathbb N_{\geq 1}$.

The main result of this paper is the following result concerning the exact number of solutions for \eqref{eq:mfe} when $E_{\tau}$ is a rectangular torus.

\begin{theorem}\label{thm:main} Let $\tau=\ii b$ with $b>0$, i.e., $E_{\tau}=E_{\ii b}$ is a rectangular torus. 
Let \(n\in\N_{\geq 2}\).  Then for every
\(\rho\in(8\pi(n-1),8\pi n)\), equation \eqref{eq:mfe} has exactly
\(n\) solutions. Furthermore, all solutions are axisymmetric (i.e., invariant under the rectangular reflections):
$$u(z)=u(-z)=u(\bar z).$$ 
\end{theorem}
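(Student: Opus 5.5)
The strategy is to combine the degree count of Theorem \ref{thm B} with a nondegeneracy statement. We show that every solution of \eqref{eq:mfe} on $E_{\ii b}$ is nondegenerate, so that each contributes local index $\pm1$. We also identify the solution set precisely enough to know that all indices equal $+1$. Since $d_\rho=n$, this forces exactly $n$ solutions. The argument is organized in the following steps.

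\textbf{Step 1: the symmetric problem is nondegenerate.} Work first in the space $X$ of axisymmetric functions, those satisfying $u(z)=u(-z)=u(\bar z)$. Since the torus is rectangular, the reflections $z\mapsto -z$ and $z\mapsto\bar z$ preserve $E_{\ii b}$ and fix $0$, so \eqref{eq:mfe} restricts to $X$. By Theorem \ref{thm A} the Leray--Schauder degree $d^X_\rho$ of the restricted problem is well defined and constant on each interval $(8\pi(n-1),8\pi n)$.

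The main analytic claim is that for every axisymmetric solution $u$ and every $\rho\notin 8\pi\N$, the linearized equation
\[
\Delta\phi+\e^u\phi=0
\]
has only the trivial solution in $X$. The plan is to go through the Lam\'e equation. Write $\e^u$ via a developing map $f$:
\[
\e^u=\frac{8|f'|^2}{(1+|f|^2)^2},\qquad f=\frac{y_1}{y_2},
\]
where $y_1,y_2$ solve $y''=(\eta(\eta+1)\wp(z)+B)y$ and the monodromy is unitary. Kernel elements of the linearization are then generated by infinitesimal deformations of $f$: M\"obius rotations, variations of the accessory parameter $B$, and variations of $\eta$.

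Axisymmetry should rule out the M\"obius directions, or make them odd. It should also force the monodromy to be real, i.e.\ diagonal up to conjugation with real traces, governed by the Hill discriminants
\[
\Delta_1(B)=\tr M_1,\qquad \Delta_2(B)=\tr M_2.
\]
Unitarity means both traces lie in $[-2,2]$ with a commutator condition; this is where a commutator trace formula for $\tr(M_1M_2M_1^{-1}M_2^{-1})$ in terms of $e^{2\pi \ii\eta}$ enters. A nontrivial kernel element in $X$ would then correspond to $\partial_B$ of the monodromy data vanishing in a way that violates monotonicity or simplicity of $B\mapsto\Delta_j(B)$ on its band structure. The needed input is a Hill-discriminant statement: on the relevant intervals $\Delta_j'(B)\neq0$ and the two conditions are transversal.

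\textbf{Step 2: counting symmetric solutions.} By Step 1 each symmetric solution has index $\pm1$ in $X$, and the solutions vary along smooth branches in $\rho$ with no turning points. Continuing from $\rho\to 8\pi(n-1)^+$, or directly locating the real $B$ satisfying the monodromy conditions, one checks that there are exactly $n$ symmetric solutions. For $n=1$ this reduces to Lin--Wang uniqueness.

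\textbf{Step 3: all solutions are symmetric and nondegenerate in the full space.} The linearization decomposes under the group $\{\pm1\}\times\{\text{conj}\}$ into four isotypic pieces. The odd pieces should be excluded by the same monodromy argument; there, kernels correspond to M\"obius or translation directions, which are killed because $0$ is fixed and $\rho\notin8\pi\N$. Hence every axisymmetric solution is nondegenerate in the full space, with local index equal to $+1$ by a continuity or sign argument, and contributes $n$ in total, matching $d_\rho=n$.

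To exclude non-symmetric solutions, use the following argument. If $u$ is a solution, then $u(-z)$, $u(\bar z)$ and $u(-\bar z)$ are solutions too. A non-symmetric solution therefore comes in an orbit of size $2$ or $4$, whose members all share the same index. These indices must cancel to keep the total degree $n$, so the orbit must have negative index, which in turn requires degeneracy somewhere along a continuation path. The cleanest way I would attempt this is to run the continuation in $\rho$ from $0$, using Theorem \ref{thm A} for compactness, combined with Eremenko's finiteness result. A non-symmetric branch must bifurcate from a symmetric one, or from blow-up at $\rho\in8\pi\N$. Bifurcation from a symmetric branch requires a kernel in the non-$X$ sectors, which Step 3 excludes. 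Blow-up at $\rho=8\pi k$ must be excluded using $\tau\notin\mathcal E_k$ for rectangular tori, which is Lin--Wang for $k=1$ and Chen--Lin in general. I expect Step 1, the Hill-discriminant nondegeneracy, to be the main obstacle.
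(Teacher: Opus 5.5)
There is a genuine gap, above all in how you obtain axisymmetry and the exact count. Your exclusion of non-symmetric solutions does not work, for three reasons.

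First, the claim that a non-symmetric orbit must carry negative index ``to keep the total degree $n$'' is circular. It presupposes that the symmetric solutions already contribute exactly $n$, each with index $+1$, and you never prove this. The Leray--Schauder index of a nondegenerate solution is $(-1)^{\text{Morse index}}$, and nothing in your argument controls that parity.

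Second, nondegeneracy of the \emph{symmetric} solutions does not prevent a pair of non-symmetric solutions from being born at a fold.

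Third, continuation from $\rho=0$ cannot cross $8\pi k$. Since $E_{\ii b}$ has no solutions at $\rho=8\pi k$, every sequence of solutions with $\rho_j\to 8\pi k$ must blow up. The non-existence result therefore forces blow-up rather than excluding it.

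The missing idea is a direct monodromy rigidity argument. For any solution, $\mathcal X(B_0)=\Delta_h(-B_0)\in[-2,2]$ forces $B_0\in\R$, by Hill theory for the real potential $q_h(t)=\eta(\eta+1)\wp(z_b+t)$. Since $\overline{\wp(\bar z)}=\wp(z)$, the solution $u(\bar z)$ has the same projective connection $-2[\eta(\eta+1)\wp+B_0]$. Two solutions with the same accessory parameter coincide. Their Wronskian-one bases with $\mathrm{SU}(2)$ monodromy differ by some $A\in\mathrm{SL}(2,\C)$, and $\bar A^{\mathsf T}A$ is positive Hermitian and commutes with the whole monodromy group. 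That group is non-abelian because $\tr[M_h,M_v]=2\cos(2\pi\eta)\neq2$, so $\bar A^{\mathsf T}A=I_2$ and $u(\bar z)=u(z)$. With Lin--Wang's evenness, every solution is axisymmetric, with no degree or bifurcation argument.

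The count then needs an anchor, which your ``one checks'' and ``index $+1$ by a continuity or sign argument'' do not supply. Nondegeneracy in $C^{2,\beta}_{\rm ax}$ plus the Bartolucci--Tarantello compactness make the number of axisymmetric solutions constant on $(8\pi(n-1),8\pi n)$. The known exact count at $\rho=8\pi n-4\pi$ (Chai--Lin--Wang) or near the endpoints (Chen--Lin) then fixes it to be $n$.

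Your Step 1 is likewise only a sketch, and its proposed mechanism is off. The monodromy is never diagonal up to conjugation, since the commutator would then be $I_2$. The index $\eta$ is fixed, so its variation plays no role. The M\"obius directions are excluded by non-commutativity of the monodromy, not by symmetry. Monotonicity of each $\Delta_j$ on its bands does not address the relevant locus: for real $B$ in a common band, unitarizability is the single real condition $\operatorname{Im}\mathcal Z(B)=0$, equivalently $P(B)=C_\eta$.

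What is actually needed has three parts.

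(a) For an axisymmetric kernel element $\phi\neq0$, $Q_\phi=\phi_{zz}-u_z\phi_z$ is holomorphic with a removable singularity at $0$, hence a real constant $c$. Moreover $c\neq0$: otherwise $\e^{-u}\phi_{\bar z}$ is a single-valued solution of the symmetric-square equation and yields a nonzero traceless matrix commuting with all monodromy.

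(b) Realize $\phi$ as the metric variation $\mathcal V_f[v]$ of a developing-map deformation with $D\mathcal S_f[v]=c$. This shows that the exact Lam\'e path $B(t)=B_0-\tfrac{c}{2}t$ has monodromy tangent to $\mathrm{SU}(2)$ at $t=0$. Since $c\neq0$, this gives $\frac{\dd}{\dd B}\operatorname{Im}\mathcal Z(B)\big|_{B=B_0}=0$.

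(c) On the other hand, use the commutator trace identity $G^2=P-C_\eta$, with $G=2\mathcal Z-\mathcal X\mathcal Y$, $P=(4-\mathcal X^2)(4-\mathcal Y^2)$ and $C_\eta=16\sin^2(\pi\eta)$. Combine it with the strict log-concavity of $4-\Delta^2$ on bands, which follows from the Hadamard product over periodic and antiperiodic eigenvalues. This gives $P\le C_\eta$ on each common-stability component, with equality only at a nondegenerate maximum. Hence $2G'(B_0)^2=P''(B_0)<0$ and $\operatorname{Im}\mathcal Z'(B_0)\neq0$, contradicting (b).
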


Together with previous results from Lin-Wang \cite{LinWangAnnals, LinWang} and Chen-Lin \cite{ChenLinAJM}, we obtain the following complete description of the solution number of \eqref{eq:mfe} for rectangular tori.

\begin{corollary}\label{coro:main}
Let $\tau=\ii b$ with $b>0$, i.e., $E_{\tau}=E_{\ii b}$ is a rectangular torus. 
\begin{itemize}
\item[(1)] If $\rho\in 8\pi\mathbb N_{\geq 1}$, then \eqref{eq:mfe} has no solutions. Equivalently, $E_{\ii b}$ admits no spherical metrics with a conical singularity of angle $2\pi\vartheta$ as long as $\vartheta$ is a positive odd integer.
\item[(2)] If $n\in\mathbb{N}_{\geq 1}$ and \(\rho\in(8\pi(n-1),8\pi n)\), then \eqref{eq:mfe} has exactly
\(n\) solutions. Equivalently, $E_{\ii b}$ admits exactly $n$ spherical metrics with a conical singularity of angle $2\pi\vartheta$ for each $\vartheta\in (2n-1, 2n+1)$.
\end{itemize}

\end{corollary}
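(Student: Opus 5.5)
The corollary is assembled from Theorem \ref{thm:main} and the earlier results quoted in the introduction. No new analysis is needed beyond bookkeeping of the cases $\rho\in 8\pi\mathbb N_{\geq1}$, $\rho\in(0,8\pi)$ and $\rho\in(8\pi(n-1),8\pi n)$ with $n\ge 2$. Throughout, the translation between solutions and metrics goes through \eqref{eq:metric}, with $\rho=8\pi\eta$ and $\vartheta=2\eta+1$:
\[
\rho\in 8\pi\mathbb N_{\ge1}\iff \vartheta\in\{3,5,7,\dots\},\qquad
\rho\in(8\pi(n-1),8\pi n)\iff\vartheta\in(2n-1,2n+1).
\]
Conversely, a spherical metric with one cone point of angle $2\pi\vartheta$ at $0$, written as $\frac12 e^{u}|dz|^2$, gives a solution of \eqref{eq:mfe} by the curvature equation and the local behaviour \eqref{ulog}. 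We place the cone point at $0$ using a translation of $E_{\ii b}$; this is an isometry, so it does not change the count. Hence the geometric statements are equivalent to the counting statements for \eqref{eq:mfe}.

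For part (1), the case $\rho=8\pi$ is the Lin--Wang result \cite{LinWangAnnals} quoted above: on a rectangular torus, $G$ has only the three half-period critical points, so \eqref{eq1-1} has no solution. For $\rho=8\pi n$ with $n\ge2$, i.e. $\tau=\ii b\notin\mathcal E_n$, I would invoke Chen--Lin \cite{ChenLinAJM}, which the text attributes to this purpose. If a self-contained argument were needed, one could use the Chai--Lin--Wang/Lin--Wang dichotomy: a solution of \eqref{eq0-2} forces infinitely many solutions, i.e. a one-parameter family. That family should contradict the reflection symmetry $z\mapsto\bar z$ of $E_{\ii b}$ together with the real structure of the Lamé monodromy. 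This reflection/monodromy input is the only delicate point; here I would cite it rather than reprove it.

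Part (2) with $n=1$ is exactly Lin--Wang \cite[Theorem 1.4]{LinWang}: uniqueness for $\rho\in(0,8\pi)$. Existence also follows from $d_\rho=1$ in Theorem \ref{thm B}. For $n\ge2$, part (2) is precisely Theorem \ref{thm:main}. Finally, the word ``exactly'' in the metric formulation requires that distinct solutions $u$ give distinct metrics, which is immediate since $g$ determines $u$. So the only real obstacle is making sure the cited critical-case nonexistence covers all $n\ge1$ for every $b>0$, and I expect \cite{ChenLinAJM} to state this directly.
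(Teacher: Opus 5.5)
Your proposal is correct and follows the paper's own argument: (1) is cited from Lin--Wang \cite{LinWangAnnals} for $\rho=8\pi$ and from Chen--Lin \cite[Theorem 1.1]{ChenLinAJM} for $\rho=8\pi n$, $n\ge2$, and (2) is \cite[Theorem 1.4]{LinWang} for $n=1$ and Theorem \ref{thm:main} for $n\ge2$. Your tentative ``self-contained'' reflection/monodromy sketch for the critical case is not needed and is not an argument as written, so citing \cite{ChenLinAJM} is the right move; the one case your $\rho\leftrightarrow\vartheta$ translation leaves unaddressed is $\vartheta=1$ (i.e.\ $\rho=0$), which the paper also does not discuss and which is excluded trivially by integrating $\Delta u+\e^u=0$ over the torus.
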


\begin{proof}
The assertion (1) was proved by Lin-Wang \cite{LinWangAnnals} for $n=1$ and Chen-Lin \cite[Theorem 1.1]{ChenLinAJM} for $n\geq 2$. For the assertion (2), the case $n=1$ was proved in \cite[Theorem 1.4]{LinWang}, and the case $n\geq 2$ follows from Theorem \ref{thm:main}.
\end{proof}

Here are some comments for Theorem \ref{thm:main} and Corollary \ref{coro:main}.
\begin{remark}\label{rmk1-3}\
\begin{itemize}
\item[(1)] The previous argument already indicates that the condition of $E_{\tau}=E_{\ii b}$ being a rectangular torus is essential for the validity of Theorem \ref{thm:main} and Corollary \ref{coro:main}, namely we can not expect such statements for arbitrary tori.
\item[(2)] Theorem \ref{thm:main} was proved by Chai-Lin-Wang \cite[Corollary 0.4.2]{ChaiLinWang} for $\rho=8\pi n-4\pi$, and by Chen-Lin \cite[Theorem 1.3]{ChenLinJDG} for \begin{equation}\label{rhoend}\rho\in (8\pi(n-1), 8\pi(n-1)+\varepsilon_{b,n})\cup (8\pi n-\varepsilon_{b,n}, 8\pi n),\end{equation} where $\varepsilon_{b,n}>0$ is a small unknown constant depending on $\tau=ib$ and $n$. The approach of \cite[Corollary 0.4.2]{ChaiLinWang} will be explained in Remark \ref{rmk1-5} below, which only work for $\rho=8\pi n-4\pi$. The approach of \cite[Theorem 1.3]{ChenLinJDG} essentially relies on the bubbling phenomena for $\rho\to 8\pi n$ or $8\pi (n-1)$ and hence only work for $\rho$ close to these two values. 
Whether Theorem \ref{thm:main} holds for all $\rho\in (8\pi(n-1), 8\pi n)$ remained open as a conjecture there; see \cite[Conjecture C]{ChenLinJDG}. Theorem \ref{thm:main} solves \cite[Conjecture C]{ChenLinJDG} completely. 
\end{itemize}
\end{remark}

Corollary \ref{coro:main} has an interesting application to the classical Lam\'{e} equation
\begin{equation}\label{eq:Lame1}
 Y''(z)=\bigl[\eta(\eta+1)\wp(z)+B\bigr]Y(z),
\end{equation}
where the parameter $\eta>0$ is called the index and $B\in\mathbb C$ is called the accessory parameter, and $\wp(z)=\wp(z;\ii b)$ is the Weierstrass $\wp$-function with double periods $1$ and $\ii b$. see e.g. \cite{ChaiLinWang} for detailed introductions on this equation.

By the Liouville theorem, it is easy to prove that if $u(z)$ is a solution of \eqref{eq:mfe}, then there is an accessory parameter $B_0\in\mathbb C$ such that 
$$u_{zz}-\frac12 u_z^2=-2[\eta(\eta+1)\wp(z)+B_0],$$
and moreover, the monodromy of \eqref{eq:Lame1} with $B=B_0$ is unitarizable, namely up to a common conjugation, the monodromy group of \eqref{eq:Lame1} is a subgroup of $\mathrm{SU}(2)$.
We will briefly review this fact in Section \ref{Sec 4}. In general, the monodromy of \eqref{eq:Lame1} is difficult to determine when $\eta\notin\frac12\mathbb Z$. Chen-Lin proved in \cite[Theorem 1.6]{ChenLinJDG}
that if $\eta=n\in\mathbb{N}_{\geq1}$, then the monodromy of \eqref{eq:Lame1} can not be unitarizable for any $B$.
Here we have the following result for $\eta\notin\mathbb N_{\geq 1}$. 

\begin{theorem}\label{thm:Lame}
Let $n\in\mathbb{N}_{\geq 1}$ and $\eta\in (n-1,n)$. Then there are exactly $n$ $B$'s such that the monodromy of \eqref{eq:Lame1} is unitarizable.
\end{theorem}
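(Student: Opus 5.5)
Theorem \ref{thm:Lame} follows from Corollary \ref{coro:main}(2) once we have a bijection, for fixed $\eta\in(n-1,n)$ and $\rho=8\pi\eta$, between solutions $u$ of \eqref{eq:mfe} and accessory parameters $B$ for which the monodromy of \eqref{eq:Lame1} is unitarizable. One direction is already stated above: each solution $u$ gives a $B_0$ with
\[
u_{zz}-\tfrac12u_z^2=-2\bigl[\eta(\eta+1)\wp(z)+B_0\bigr],
\]
and the monodromy at $B_0$ is unitarizable. So we have a map $u\mapsto B_0(u)$. It remains to show that this map is surjective and injective.

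\textbf{Surjectivity.} Let $B$ have unitarizable monodromy. Choose a basis $(Y_1,Y_2)$ of solutions of \eqref{eq:Lame1} in which the monodromy representation takes values in $\mathrm{SU}(2)$, normalized so that the Wronskian is $1$. Set $f=Y_1/Y_2$, a multivalued developing map, and define
\[
u=\log\frac{8|f'|^2}{(1+|f|^2)^2}.
\]
Since $\mathrm{SU}(2)$ acts on the Riemann sphere by isometries of the Fubini–Study metric, $u$ is single-valued on $E_{\ii b}\setminus\{0\}$. A direct computation shows $\Delta u+\e^u=0$ there. At $z=0$ the local exponents of \eqref{eq:Lame1} are $-\eta$ and $\eta+1$, so $f'=W/Y_2^2$ behaves like $z^{2\eta}$ up to a holomorphic unit. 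Here one must check that $f$ has no logarithmic terms at $0$. This holds because $2\eta+1\notin\mathbb Z$, and, more to the point, the local monodromy around $0$ is trivial in $\mathrm{PSL}$ up to the phase $\e^{2\pi \ii\eta}$. Hence $u=4\eta\log|z|+O(1)$ near $0$, and $u$ solves \eqref{eq:mfe} with $\rho=8\pi\eta$. The same computation recovers $u_{zz}-\frac12u_z^2=-2(\eta(\eta+1)\wp+B)$, so $B_0(u)=B$.

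\textbf{Injectivity.} Suppose $u_1$ and $u_2$ give the same $B$. Their developing maps $f_1,f_2$ are ratios of solutions of the same equation, so $f_2=\gamma\circ f_1$ for some $\gamma\in\mathrm{PSL}(2,\C)$. Each $f_i$ has its monodromy in $\mathrm{PSU}(2)$, which means $\gamma$ conjugates one unitary monodromy group into another. If the monodromy group is irreducible, in the sense of not fixing a point pair, then $\gamma$ must itself be in $\mathrm{PSU}(2)$ up to the appropriate normalization. In that case $u_1=u_2$.

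\textbf{Main obstacle: the reducible case.} If the unitary monodromy is reducible, it is abelian and diagonal, and then $f\mapsto\lambda f$ with $\lambda>0$ would generate a one-parameter family of solutions. This is the phenomenon behind the infinitely many solutions when $\rho\in8\pi\mathbb N$. I would rule it out using Corollary \ref{coro:main}(2): finiteness of the solution set forbids a continuous family.

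Alternatively, one can argue directly. The local monodromy at $0$ is $-\e^{\pm2\pi\ii\eta}$ times a scalar, so it is the identity in $\mathrm{PSL}$ only up to sign. Abelian monodromy would then force $f$ to be an elliptic-type function with a zero or pole of order $2\eta+1$ at $0$, which is impossible when $\eta\notin\frac12\mathbb Z$. For $\eta=n-\tfrac12$, reducibility would give a continuum of solutions, again contradicting finiteness.

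Therefore the correspondence is bijective, and the number of $B$'s with unitarizable monodromy equals the number of solutions, which is exactly $n$.
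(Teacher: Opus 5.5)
Your argument is correct and follows the paper's route. The paper deduces Theorem \ref{thm:Lame} from two ingredients: the bijection of Corollary \ref{coro:11}, and the exact count (Theorem \ref{thm:main}, or Corollary \ref{coro:main}(2) to include $n=1$). In that bijection, surjectivity comes from the Liouville formula. Injectivity comes from the proof of Lemma \ref{symmetric lem}, where $H=\bar A^{\mathsf T}A$ commutes with the monodromy; this is your Schur-type step.

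The only real difference is how you dispose of the reducible case. Your finiteness argument is valid and not circular. The family $u_\lambda$, $\lambda>0$, consists of pairwise distinct solutions because $|f|$ is not constant, and Theorem \ref{thm:main} does not use Theorem \ref{thm:Lame}. The paper's argument, however, is local and shows the case never arises. Simultaneously diagonal monodromy is commutative, so by \eqref{eqMhMnu} $M_{\gamma_0}=[M_h,M_v]=I_2$. This contradicts $\tr M_{\gamma_0}=2\cos(2\pi\eta)\neq2$ from \eqref{eq:commutator}. That route makes the bijection independent of any counting result.

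Your ``direct'' alternative should be dropped. $M_{\gamma_0}$ has eigenvalues $\e^{\pm2\pi\ii\eta}$, which are distinct unless $\eta\in\frac12+\mathbb Z$, so it is not a scalar matrix. The ``elliptic-type function'' reasoning also does not hold up.

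One more correction in the surjectivity step: $2\eta+1\notin\mathbb Z$ fails at $\eta=n-\frac12$. There the absence of logarithmic terms comes from unitarity instead. $M_{\gamma_0}$ is then unitary with double eigenvalue $-1$, hence equals $-I_2$.
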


\begin{remark}\label{rmk1-5}
Theorem \ref{thm:Lame} for $n=1$ follows directly from Lin-Wang's uniqueness result \cite[Theorem 1.4]{LinWang} for $\rho\in (0,8\pi)$.
Theorem \ref{thm:Lame} for $\eta=n-\frac12$ was proved in \cite[Corollary 0.4.2]{ChaiLinWang}. Since $\eta=n-\frac{1}{2}$ is a half integer, the local exponent difference $2\eta+1$ is an \emph{even integer}, which infers the existence of a polynomial $P_n(B)$ of degree $n$ in $B$ such that solutions of \eqref{eq:Lame1} with $\eta=n-\frac{1}{2}$ have no logarithmic singularities if and only if $P_n(B)=0$. It was proved in \cite[Corollary 0.4.2]{ChaiLinWang} that, (i) for each such $B$, the monodromy is unitarizable and indeed the projective monodromy group is the Klein four group; (ii) there is a one-to-one correspondence between solutions of \eqref{eq:mfe} with $\rho=8\pi (n-\frac{1}{2})$ and zeros of $P_n(B)$. Thus the conclusions of Theorem \ref{thm:Lame} for $\eta=n-\frac12$ and Theorem \ref{thm:main} for $\rho=8\pi (n-\frac{1}{2})$  follow from the fact that $P_n$ has exactly $n$ different roots because $\tau=\ii b$. Clearly this idea can not apply for $\rho\neq8\pi (n-\frac{1}{2})$ due to the essential difference between $\eta\notin\frac{1}{2}\mathbb{Z}$ and $\eta\in\frac{1}{2}\mathbb{Z}$ for the Lam\'{e} equation.
\end{remark}

\subsection{Ideas of the proof}

Thanks to the previous results recalled in Remark \ref{rmk1-3}-(2), we will use the continuity method to prove Theorem \ref{thm:main}.
The proof consists of three main ingredients: the axisymmetry of all
solutions, the nondegeneracy in the axisymmetric class, and the
continuation of the solution count across the whole parameter interval.

First,
Lin--Wang already proved that every solution of \eqref{eq:mfe} is even whenever
\(\rho\notin8\pi\N_{\geq 1}\); see \cite[Lemma~4.3]{LinWang}.  It remains to prove the
reflection symmetry across a rectangular axis.  The unitarizability of the monodromy of 
the Lam\'e equation forces the accessory parameter
\(B_0\) to be real, by using the Floquet theory for the associated real Hill
operator.  Comparing the projective connections of \(u(z)\) and
\(u(\bar z)\), and using the non-coaxiality of the monodromy due to $\eta\notin\mathbb Z$, then gives
\(u(\bar z)=u(z)\).  Thus every solution is axisymmetric. See Lemma \ref{symmetric lem}.

Our second ingredient is to prove the following nondegeneracy in the axisymmetric class.

\begin{theorem}[=Theorem \ref{thm:ax-nondeg}]\label{thm:zeroj}
Let $n\in\mathbb{N}_{\geq 1}$, $\rho\in (8\pi(n-1), 8\pi n)$ and $u(z)$ be a solution of \eqref{eq:mfe}, which is axisymmetric. Then 
\begin{equation}\label{eq:Jacobi}
\begin{cases}
\Delta \phi+e^u\phi=0\quad\text{on }E_{\ii b}\\
\phi(z)=\phi(-z)=\phi(\bar z)
\end{cases}
\end{equation}
has only trivial solutions.
\end{theorem}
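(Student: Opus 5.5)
The plan is to translate a nontrivial axisymmetric kernel element $\phi$ into a first-order deformation of the accessory parameter $B_0$ that keeps the monodromy of \eqref{eq:Lame1} unitarizable. We then show that no such deformation exists, using Floquet theory of the real Hill operator attached to $B_0$.

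\textbf{Step 1 (kernel element $\to$ variation of $B$).} Write $u=\log\frac{8|f'|^2}{(1+|f|^2)^2}$, where $f$ is the multivalued developing map whose Schwarzian is $-2[\eta(\eta+1)\wp+B_0]$. Its monodromy lies in $\mathrm{PSU}(2)$ and is non-coaxial because $\eta\notin\mathbb Z$ (as in Lemma \ref{symmetric lem}). Differentiating the identity $u_{zz}-\tfrac12u_z^2=-2[\eta(\eta+1)\wp+B]$ along the linearized equation, a direct computation using $\Delta\phi+e^u\phi=0$ shows that $q:=\phi_{zz}-u_z\phi_z$ is holomorphic on $E_{\ii b}\setminus\{0\}$. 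From \eqref{ulog} and the local behaviour of $\phi$ at $0$ (bounded, since the kernel is in $C^2$ away from the cone point, with $\phi=O(1)$ and $\phi_z=O(|z|^{-1})$), we get that $q$ has at most a simple pole at $0$. Since $q$ is even, it has no pole there, so $q\equiv -2\dot B$ is a constant.

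\textbf{Step 2 (the case $\dot B=0$).} Then $\phi_{zz}=u_z\phi_z$, so $\phi_z e^{-u}$ is antiholomorphic in a suitable sense. Equivalently, $\phi$ is the infinitesimal variation of $u$ under an infinitesimal Möbius transformation of $f$. Locally the solutions are the three functions
\[
\frac{1-|f|^2}{1+|f|^2},\qquad \frac{2\operatorname{Re} f}{1+|f|^2},\qquad \frac{2\operatorname{Im} f}{1+|f|^2},
\]
i.e.\ an element $\xi\in\mathfrak{su}(2)$. Single-valuedness of $\phi$ forces $\xi$ to be fixed by the adjoint action of the whole monodromy group. Non-coaxiality of the monodromy then gives $\xi=0$, hence $\phi\equiv0$.

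\textbf{Step 3 (the case $\dot B\neq0$, main obstacle).} By Lemma \ref{symmetric lem}'s argument $B_0\in\mathbb R$, and axisymmetry of $\phi$ keeps the deformation in real $B$. Let $M_1,M_2$ be the monodromies along the periods $1$ and $\ii b$. The loop around $0$ has fixed trace $-2\cos(2\pi\eta)$, so the Fricke relation ties $t_1=\tr M_1$, $t_2=\tr M_2$ and $t_3=\tr M_1M_2$. For real $B$ and a rectangular lattice, $t_1(B)$ and $t_2(B)$ are the Hill discriminants of the real Hill operators obtained by restricting \eqref{eq:Lame1} to the horizontal and vertical lines through a half period. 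Using the commutator trace formula, unitarizability becomes a single real equation $F(B):=\Phi(t_1(B),t_2(B))=0$ with $t_1,t_2\in(-2,2)$.

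A kernel element with $\dot B\ne0$ exists exactly when $F'(B_0)=0$: one constructs $\phi$ from $\partial_B$ of the unitarizing hermitian form, and conversely reads off $\dot B$ from $\phi$. The hardest point is therefore proving $F'(B_0)\neq0$. The first thing to try is the classical fact that inside a stability band the Hill discriminant is strictly monotone, $t_j'(B)\neq0$. The signs are opposite for the two directions, because along $\ii\mathbb R$ the equation becomes a Hill operator in $-B$ with potential $\wp(\ii y)$. Combining this opposite monotonicity with the explicit form of $\Phi$ coming from the commutator trace identity should show that $\nabla\Phi\cdot(t_1',t_2')$ has a definite sign at every zero of $F$. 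This would give $F'(B_0)\ne0$, hence $\dot B=0$, and Step 2 then finishes the proof.
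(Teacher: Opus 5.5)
Your Steps 1--2 essentially reproduce the paper's Propositions~\ref{prop:Qconstant} and~\ref{prop:c-nonzero}. Your fixed-axis argument in $\mathrm{SO}(3)$ is a valid substitute for the paper's symmetric-square argument: a common fixed axis puts all $\mathrm{SU}(2)$ lifts in one maximal torus, which forces $[M_h,M_v]=I_2$.

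\textbf{The gap is Step~3: the criterion you propose cannot work.} For real $B$ with $\mathcal X,\mathcal Y\in(-2,2)$, the commutator trace identity reads $G(B)^2=P(B)-C_\eta$. Here $G=2\mathcal Z-\mathcal X\mathcal Y$ is entire and $P=(4-\mathcal X^2)(4-\mathcal Y^2)$, and unitarizability at $B_0$ means $G(B_0)=0$. Hence $P-C_\eta=G^2$ vanishes to second order at $B_0$. Geometrically, the real curve $B\mapsto(\mathcal X(B),\mathcal Y(B))$ is tangent at $B_0$ to the locus $\{(4-t_1^2)(4-t_2^2)=C_\eta\}$; in fact it stays on one side of it, by Lemma~\ref{lem:no-positive}. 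Consequently, for every $C^1$ function $\Phi(t_1,t_2)$ whose zero set is the unitarizable locus, $F'(B_0)=\nabla\Phi\cdot(\mathcal X',\mathcal Y')=0$ automatically, since the gradient is normal and the velocity is tangent. So your target $F'(B_0)\neq0$ is false at every solution. Worse, your equivalence ``a kernel element with $\dot B\neq0$ exists iff $F'(B_0)=0$'' would predict a nontrivial axisymmetric kernel at every solution, contradicting the statement you are proving.

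The quantity that actually detects transversality involves the third trace. One has $\frac{\dd}{\dd B}\operatorname{Im}\tr(M_hM_v)\big|_{B_0}=\frac12\operatorname{Im}G'(B_0)$, and since $G(B_0)=0$ gives $P''(B_0)=2G'(B_0)^2$, the condition $\operatorname{Im}G'(B_0)\neq0$ amounts to $P''(B_0)<0$. This is a second-order condition on $P$. Strict monotonicity of $\Delta_h,\Delta_v$ inside bands is only first-order information and cannot fix the sign of $P''$: the second derivative of $\log(4-t^2)$ along $t(B)$ contains the uncontrolled term $t\,t''$. The missing ingredient is the strict log-concavity of $4-\Delta_q^2$ on stability intervals (Lemma~\ref{lem:Hill-concavity}), which comes from the Hadamard product over the real periodic and antiperiodic eigenvalues. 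It makes $\log P$ strictly concave, forces $P\le C_\eta$ with a unique strict contact at $B_0$, and gives $2G'(B_0)^2=P''(B_0)<0$.

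On the other side, you assert but do not prove that $\phi$ produces a first-order deformation along the exact path $B(t)=B_0-\frac c2t$ whose monodromy velocities lie in $T\,\mathrm{SU}(2)$. That is the content of Theorem~\ref{Key thm}. What it yields is $\frac{\dd}{\dd B}\operatorname{Im}\mathcal Z(B)\big|_{B_0}=0$, which is the statement that contradicts transversality; it is not a statement about a function of $(t_1,t_2)$.

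There is also a smaller point in Step~1. The bound $\phi_z=O(|z|^{-1})$ would allow $q$ to have a double pole, and evenness does not exclude that. You need $\phi\in C^2$ at $0$, by elliptic regularity as in the paper, so that $\phi_{zz}$ is bounded and $\phi_z(0)=0$.
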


Before introducing our new unified approach of proving this result for all $\rho\in (0,+\infty)\setminus8\pi\mathbb{N}_{\geq 1}$, we want to explain why the method of analysing nodal domains via Bol's isoperimeter inequality, which has been widely used in the literature, can not work here. Let us take the reference \cite{LinWangAnnals} for example. Suppose $u(z)$ is an even solution of \eqref{eq:mfe}, Lin-Wang proved in \cite[Section 4]{LinWangAnnals} that 
\begin{equation}\label{eq:Jacobi1}
\begin{cases}
\Delta \phi+e^u\phi=0\quad\text{on }E_{\tau}\\
\phi(z)=\phi(-z)
\end{cases}
\end{equation}
has only trivial solutions for $\rho\in [4\pi,8\pi]$, where $E_{\tau}$ can be arbitary torus. Their method is as follows. Suppose $\phi\neq 0$ is a solution of \eqref{eq:Jacobi1}, then $\phi$ changes sign and hence has at least two nodal domains. By using Bol's isoperimetric inequality, they can prove that the integral of $e^u$ on each simply-connected nodal domain is at least $4\pi$. 
Then, they can proved that
$
\int_{E_{\tau}}e^u>8\pi,$
which contradicts \(\rho\leq 8\pi\). 
However, determining the precise number of nodal domains appears virtually impossible in our problem, since \(\rho\) can be arbitrarily large.

Our new unified approach explores the surprising effort of a nontrivial axisymmetric Jacobi field $\phi$ on the monodromy of the Lam\'{e} equation. The first key point of our second ingredient is Theorem \ref{thm:transversality}.  Combining the commutator trace formula with the strict log-concavity of the horizontal and vertical
Hill stability factors, we show that every real unitarizable Lam\'e
value is transverse to the unitarizable character locus.  In
particular, the mixed monodromy trace has a nonzero first variation
along the real accessory-parameter direction.

Section \ref{sec:axisymmetric-nondegeneracy} converts this transversality into the nondegeneracy.  
A nontrivial
axisymmetric Jacobi field \(\phi\) would determine a nonzero variation
of the accessory parameter $B_0$.  Theorem \ref{Key thm} shows, however, that the
corresponding exact Lam\'e deformation is tangent, to first order, to
the spherical monodromy locus.  This contradicts Theorem \ref{thm:transversality}.
Therefore no nontrivial axisymmetric Jacobi field exists, and hence completes the proof of Theorem \ref{thm:zeroj}. One can see that the condition $\rho\notin 8\pi \mathbb{N}_{\geq 1}$ plays a crucial role in the proof.

Finally, the nondegeneracy and the a priori compactness theorem imply, via
the implicit function theorem, that the number of axisymmetric
solutions is constant on \((8\pi(n-1),8\pi n)\).  The previous results mentioned in Remark \ref{rmk1-3}-(2) give exactly \(n\) solutions for $\rho=8\pi n-4\pi$ or $\rho$ belongs to \eqref{rhoend}, and
the axisymmetry proved above therefore yields exactly \(n\) solutions
throughout the whole interval.

\section{Floquet theory for Hill's equations}\label{sec:floquet}

Let $q\in C(\mathbb R, \mathbb R)$ be a \emph{real-valued} periodic function with minimal period $L>0$. In this section, we first briefly review the basic Floquet theory (see e.g. \cite[Chapter 2]{MagnusWinkler} or \cite[Chapter~1]{Eastham}) for Hill's equation
\begin{equation}\label{eq:Hill}
 -Y''(t)+q(t)Y(t)=EY(t),\quad t\in\mathbb R,
\end{equation}
where $E\in\mathbb C$ is a parameter.

Let $Y_{1}(t)$ and $Y_{2}(t)$ be any two linearly independent solutions of
equation \eqref{eq:Hill}. Then so do $Y_{1}(t+L)$ and $Y_{2}(t+L)$,
hence there exists a monodromy matrix $M(E)\in\mathrm{SL}(2,\mathbb{C})$ such that
\begin{equation}\label{eq:monodromyH}
\begin{pmatrix}Y_{1}(t+L)\\Y_{2}(t+L)\end{pmatrix}=M(E)\begin{pmatrix}Y_{1}(t)\\Y_{2}(t)\end{pmatrix}.
\end{equation}
Here $\det M(E)=1$ follows from the simple fact that the Wronskian $Y_1Y_2'-Y_2Y_1'$ is a constant independent of $t$.

A solution of Hill's equation \eqref{eq:Hill} is called \textit{a Floquet
solution} if it is a eigenfunction of the monodromy matrix $M(E)$.
Define the \emph{Hill's discriminant} $\Delta (E)$ by
\begin{equation}
\label{trace}\Delta_q(E):=\text{tr}M(E).
\end{equation}
Since the monodromy matrices of different bases of solutions are conjugate to each other,
the $\Delta_q(E)$ is an invariant of \eqref{eq:Hill}, i.e., does not depend on the choice of the basis of solutions. This $\Delta_q(E)$ is a holomorphic function of $E\in\mathbb{C}$.
It is easy to see that \eqref{eq:Hill} has a periodic solution with period $L$ if and only if $\Delta_q(E)=2$ (i.e., eigenvalues are $1,1$), and such $E$ is called a periodic eigenvalue; \eqref{eq:Hill} has an antiperiodic solution with period $L$ (i.e., $Y(t+L)=-Y(t)$) if and only if $\Delta_q(E)=-2$ (i.e., eigenvalues are $-1,-1$), and such $E$ is called a antiperiodic eigenvalue.
We collect the following result for later usage.

\begin{theorem}\label{thm: Hill}\
\begin{itemize}
\item[$(i)$] If $E\in\mathbb{R}$, then $\Delta_q(E)\in\mathbb{R}$. If $E\in\mathbb{C}\setminus \mathbb{R}$, then $\Delta_q(E)\notin [-2,2]$.
\item[$(ii)$] There are infinitely many periodic eigenvalues $\{E_0\} \cup\{E_{4n-1}, E_{4n}\}^{+\infty}_{n=1}$ and
infinitely many antiperiodic eigenvalues $\{E_{4n-3}, E_{4n-2}\}_{n=1}^{+\infty}$ that are all
real and can be ordered such that
\[
E_0 < \cdots < E_{2n-1} \leq E_{2n} < E_{2n+1} \leq E_{2n+2}<\cdots. \]
\item[$(iii)$] The spectrum $\sigma$ of the linear operator $-\frac{d^2}{dt^2}+q(t)$ in $L^2(\mathbb{R}, \mathbb{C})$ is given by
\[\sigma=\Delta_q^{-1}([-2,2])=\bigcup_{n=0}^{+\infty}[E_{2n}, E_{2n+1}].\]
\item[$(iv)$] $\Delta_q(E)>2$ for any $E<E_0$.
\item[$(v)$] For $n$ large, 
\begin{equation}\label{eq:Hill-zero-asymptotics}E_{2n-1}, E_{2n}=\frac{n^2\pi^2}{L^2}+O(1).\end{equation}
\end{itemize}
\end{theorem}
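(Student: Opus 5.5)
The plan is to prove the five items in the order (i), (iv), (ii), (iii), (v), using the normalized fundamental system and standard Sturm–Liouville oscillation theory, as in \cite[Chapter~2]{MagnusWinkler} and \cite[Chapter~1]{Eastham}.

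\textbf{Normalized solutions.} Fix the basis $Y_1(t;E)$, $Y_2(t;E)$ with $Y_1(0)=1$, $Y_1'(0)=0$, $Y_2(0)=0$, $Y_2'(0)=1$. Then
\[
\Delta_q(E)=Y_1(L;E)+Y_2'(L;E).
\]
This is entire in $E$, since solutions of a linear ODE depend holomorphically on parameters.

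\textbf{Item (i).} For real $E$ the coefficients and initial data are real, so $\Delta_q(E)\in\mathbb R$. For the second claim, suppose $E\notin\mathbb R$ and $\Delta_q(E)\in[-2,2]$. Then the eigenvalues of $M(E)$ are $\e^{\pm\ii\theta}$ with $\theta$ real. Hence there is a Floquet solution with $|Y(t+L)|=|Y(t)|$, i.e. a nontrivial bounded solution of \eqref{eq:Hill}. (In the Jordan-block case $\Delta=\pm2$ there is still a periodic or antiperiodic eigenvector.) Multiply the equation by $\bar Y$ and integrate over $[0,kL]$ to get
\[
\int_0^{kL}\bigl(|Y'|^2+q|Y|^2\bigr)\,\dd t-\bigl[\bar Y Y'\bigr]_0^{kL}=E\int_0^{kL}|Y|^2\,\dd t .
\]
Take imaginary parts. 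The boundary term is bounded, because $Y$ and $Y'$ are quasi-periodic with unimodular multiplier. The integral $\int_0^{kL}|Y|^2$ grows linearly in $k$. Letting $k\to\infty$ gives $\operatorname{Im}E=0$, a contradiction.

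\textbf{Item (iv).} For $E$ very negative, $q-E>0$, and a comparison argument shows $Y_1$ and $Y_2'$ are at least $1$ and increasing, so $\Delta_q>2$. More precisely, for $E<E_0$ I would use the periodic principal eigenfunction and a Riccati/Prüfer argument to show that the Floquet multipliers are real and positive with product $1$, and not both equal to $1$. The cleaner route is to define $E_0$ as the bottom of the periodic Dirichlet form on $[0,L]$. For $E<E_0$ the quadratic form $\int(|Y'|^2+(q-E)|Y|^2)$ is positive on periodic functions, which rules out $\Delta_q(E)\in[-2,2]$ by the same integration argument applied to Floquet solutions. Continuity and the large-negative-$E$ limit then force $\Delta_q>2$.

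\textbf{Items (ii) and (iii).} Apply the Prüfer angle. The periodic eigenvalues are the eigenvalues of the self-adjoint periodic problem on $[0,L]$, and the antiperiodic eigenvalues are those of the antiperiodic problem. Both have compact resolvent, so their eigenvalues are real, discrete and tend to $+\infty$. Oscillation theory says an eigenfunction for the $k$-th eigenvalue pair has $2m$ zeros in $[0,L)$ (periodic) or $2m+1$ zeros (antiperiodic). This gives the interlacing order $E_0<E_1\le E_2<E_3\le\cdots$.

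The key analytic fact for (iii) is: whenever $|\Delta_q(E)|<2$ with $E$ real, $\Delta_q'(E)\neq0$. This follows from (i). If $\Delta_q'(E)=0$ at such a point, $\Delta_q$ would map a complex neighbourhood onto a neighbourhood in a 2-to-1 fashion, so some nonreal $E$ would have real $\Delta_q\in(-2,2)$, contradicting (i). Hence $\Delta_q$ is strictly monotone between consecutive crossings of $\pm2$, which yields the band structure $\Delta_q^{-1}([-2,2])=\bigcup[E_{2n},E_{2n+1}]$.

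That this set is the $L^2(\mathbb R)$-spectrum is the standard Floquet–Bloch decomposition. The operator is unitarily equivalent to a direct integral over $\theta\in[0,2\pi)$ of the $\theta$-quasiperiodic problems. The $\theta$-problem has eigenvalue $E$ exactly when $\Delta_q(E)=2\cos\theta$.

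\textbf{Item (v).} Compare with $q\equiv0$ via the Volterra integral equation
\[
Y(t)=Y^{(0)}(t)+\int_0^t\frac{\sin\sqrt E(t-s)}{\sqrt E}\,q(s)Y(s)\,\dd s .
\]
This gives $\Delta_q(E)=2\cos(\sqrt E L)+O(|E|^{-1/2})$. A Rouché argument on circles around $n^2\pi^2/L^2$ localizes exactly two eigenvalues within $O(1)$. Since the perturbation is $O(1)$ relative to $\sqrt E$, the shift in $E$ is $O(1)$.

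\textbf{Main obstacle.} I expect the delicate step to be the oscillation count giving the precise ordering in (ii), namely the alternation of periodic and antiperiodic pairs. Here I would rely on Prüfer-angle monotonicity in $E$, as in \cite{Eastham}, rather than reprove it.
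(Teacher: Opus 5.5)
The paper gives no proof of this theorem: (i)--(iv) are quoted from Magnus--Winkler and Eastham, and (v) from Gesztesy--Weikard and McLaughlin--Nabelek. Your sketches of (i)--(iii) are sound, and the open-mapping argument that $\Delta_q'\neq 0$ wherever $|\Delta_q|<2$ is a neat way to get monotonicity from (i). The genuine gap is in (v): your argument does not give the stated $O(1)$.

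The zeros of $4\cos^2(\sqrt{E}L)-4=-4\sin^2(\sqrt{E}L)$ are \emph{double}. So from $\Delta_q(E)=2\cos(\sqrt{E}L)+O(|E|^{-1/2})$ you only get $\Delta_q(E)^2-4=-4\sin^2(\sqrt{E}L)+O(n^{-1})$ near $E=n^2\pi^2/L^2$. On the circle $|\sqrt{E}-n\pi/L|=r$ the main term has modulus about $4L^2r^2$, so Rouch\'e needs $r\gtrsim n^{-1/2}$. In the $E$-variable this is a disc of radius $\asymp n^{1/2}$. Thus you obtain only $E_{2n-1},E_{2n}=n^2\pi^2/L^2+O(n^{1/2})$. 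The heuristic ``an $O(|E|^{-1/2})$ perturbation shifts $E$ by $O(1)$'' holds for simple roots, not double ones.

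To reach $O(1)$ you must keep the next term of the Volterra expansion,
\[
\Delta_q(E)=2\cos(\sqrt{E}L)+\frac{\sin(\sqrt{E}L)}{\sqrt{E}}\int_0^L q\dd t+O(|E|^{-1})=2\cos\Bigl(\sqrt{E}L-\frac{1}{2\sqrt{E}}\int_0^L q\dd t\Bigr)+O(|E|^{-1}).
\]
Its first correction vanishes at the double roots, so Rouch\'e then works on circles of radius $O(1/n)$ in $\sqrt{E}$, i.e.\ $O(1)$ in $E$.

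A shorter route is min--max. On $[0,L]$ with periodic (resp.\ antiperiodic) boundary conditions, $q$ is a bounded perturbation of $-\frac{\dd^2}{\dd t^2}$. Hence each eigenvalue, counted with multiplicity, lies within $\|q\|_\infty$ of the corresponding free eigenvalue $(2m\pi/L)^2$ (resp.\ $((2m-1)\pi/L)^2$). Together with the ordering in (ii), this is exactly (v). For the paper's only use of (v), the convergence of the Hadamard product in Lemma~\ref{lem:Hill-concavity}, your weaker bound would actually suffice, but it is not what is asserted.

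A smaller slip is in (iv). Positivity of $\int_0^L(|Y'|^2+(q-E)|Y|^2)\dd t$ on \emph{periodic} functions does not directly apply to a Floquet solution with $\Delta_q(E)\in[-2,2)$, which is antiperiodic or only quasi-periodic. The fix is to note that $|Y|$ is $L$-periodic and Lipschitz with $\bigl||Y|'\bigr|\le|Y'|$ a.e., so
\[
0=\int_0^L\bigl(|Y'|^2+(q-E)|Y|^2\bigr)\dd t\ \ge\ \int_0^L\bigl(\bigl||Y|'\bigr|^2+(q-E)|Y|^2\bigr)\dd t\ \ge\ (E_0-E)\int_0^L|Y|^2\dd t>0,
\]
a contradiction. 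The left-hand equality is your integration identity over a single period, whose boundary term cancels because the multiplier is unimodular.
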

Here, $(i)$-$(iv)$ are classical results that can be found in \cite[Chapter 2]{MagnusWinkler} or \cite[Chapter~1]{Eastham}, and $(v)$ can be found in \cite[(3.27)]{GW-Acta} or
\cite[Theorem~2.3]{McLaughlinNabelek}. Remark that  $(i)$-$(iv)$ can not hold if $q(t)$ is not real-valued.

Now we prove the following log-concavity of $4-\Delta_q(E)^2$ whenever $4-\Delta_q(E)^2>0$.

\begin{lemma}[Log-concavity]\label{lem:Hill-concavity}
On every open interval $I\subset\mathbb{R}$ where $\Delta_q(E)\in (-2,2)$,
\begin{equation}\label{eq:Hill-concavity}
 \frac{\dd^2}{\dd E^2}\log\bigl(4-\Delta_q(E)^2\bigr)<0.
\end{equation}
\end{lemma}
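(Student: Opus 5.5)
The plan is to use the Hadamard factorization of the entire function $F(E):=4-\Delta_q(E)^2$, which reduces the claim to the elementary fact that $-\sum_j (E-\lambda_j)^{-2}<0$. Concretely, I will show that $F$ is an entire function of order at most $\tfrac12$ whose zeros are all real. Then, writing its zeros as $\lambda_j$ with multiplicity and $m$ for the order of a possible zero at $E=0$, Hadamard's theorem gives genus $0$ and
\[
F(E)=C\,E^{m}\prod_{j}\Bigl(1-\frac{E}{\lambda_j}\Bigr),\qquad \sum_j\frac1{|\lambda_j|}<\infty .
\]
Logarithmic differentiation then gives, on any real interval free of zeros,
\[
\frac{\dd^2}{\dd E^2}\log|F(E)|=-\frac{m}{E^2}-\sum_j\frac{1}{(E-\lambda_j)^2}<0 .
\]
The sum is nonempty because $F$ has infinitely many zeros by Theorem \ref{thm: Hill}(ii). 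On an interval $I$ where $\Delta_q(E)\in(-2,2)$ we have $F>0$, so $\log F=\log|F|$ there, and \eqref{eq:Hill-concavity} follows.

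\textbf{Step 1: growth.} Let $c(t,E),s(t,E)$ be the fundamental solutions normalized at $t=0$. Then $\Delta_q(E)=c(L,E)+s'(L,E)$. Writing $E=\kappa^2$, I convert \eqref{eq:Hill} into the Volterra integral equations
\[
c(t)=\cos\kappa t+\int_0^t\frac{\sin\kappa(t-s)}{\kappa}\,q(s)c(s)\dd s,
\]
and the analogous one for $s$. Gronwall's inequality gives the standard bound $|c|,|s'|\le C\e^{|\operatorname{Im}\kappa| L}\le C\e^{L\sqrt{|E|}}$. Since $\Delta_q$ is entire in $E$ (already noted in the paper), $F$ is entire of order $\le\tfrac12$. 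In particular its genus is $0$, so no exponential factor appears in the factorization.

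\textbf{Step 2: zeros are real.} By Theorem \ref{thm: Hill}(i), for $E\in\C\setminus\R$ we have $\Delta_q(E)\notin[-2,2]$, so $\Delta_q(E)\neq\pm2$ and $F(E)\ne0$. Hence every zero of $F$ is real; these are exactly the periodic and antiperiodic eigenvalues $E_j$ of Theorem \ref{thm: Hill}(ii). Note also that $F\not\equiv0$, by Theorem \ref{thm: Hill}(iv). With Steps 1 and 2, the computation in the first paragraph applies verbatim, and the real constant $C$ plays no role after taking $\log|\cdot|$.

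\textbf{Main obstacle.} The only nontrivial analytic input is the order estimate in Step 1. This is standard, but it must be stated with $\sqrt{|E|}$ in the exponent, because order exactly $1$ would allow an $\e^{aE}$ factor. Such a factor does not affect the second derivative of the logarithm, though, so even a cruder bound of order $<2$ would suffice. This makes the argument robust.

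A possible alternative route is via the quasimomentum $\Delta_q=2\cos k$, but the sign of $k''$ is awkward to control. The factorization route avoids this entirely, and it also shows the inequality is strict everywhere off the zeros, not only on the bands.
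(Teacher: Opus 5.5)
Your proposal is correct and follows the same route as the paper. Both arguments use the Hadamard factorization of $4-\Delta_q(E)^2$ with only real zeros, then term-by-term logarithmic differentiation to get $-m_0/E^2-\sum_n (E-E_n)^{-2}<0$ on the stability interval. The one difference is how the factorization is justified: the paper cites the eigenvalue asymptotics in Theorem \ref{thm: Hill}$(v)$ to get convergence of $\sum 1/|E_n|$, while your Volterra/Gronwall bound $|\Delta_q(E)|\le C\e^{L\sqrt{|E|}}$ also gives the order estimate that excludes an extra factor $\e^{g(E)}$, a point the paper leaves implicit.
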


\begin{proof}
Theorem \ref{thm: Hill} $(ii)$ shows that the zeros of $\Delta_q(E)^2-4$ are precisely $\{E_n\}_{n=0}^{+\infty}$. Consequently, it follows from \eqref{eq:Hill-zero-asymptotics} that the Hadamard factorization of $4-\Delta_q(E)^2$ is given by
\begin{equation}\label{eq:Hadamard}
4-\Delta_q(E)^2=C E^{m_0}\prod_{n\geq 0, E_n\neq 0}
 \left(1-\frac{E}{E_n}\right),
\end{equation}
where $C\neq 0$ is a constant, and $m_0=0$ if $0\notin \{E_n\}_{n=0}^{+\infty}$ (resp. $m_0\in\{1,2\}$ if $0\in \{E_n\}_{n=0}^{+\infty}$).

Let \(I\subset\mathbb R\) be an open interval where $\Delta_q(E)\in (-2,2)$.  Then \(4-\Delta_q(E)^2>0\) on
\(I\).
Fix a compact subinterval \(I_0\Subset I\).  By
\eqref{eq:Hill-zero-asymptotics}, the series
\[
 \sum_{n\geq 0}\frac1{E-E_n},
 \qquad
 \sum_{n\geq 0}\frac1{(E-E_n)^2}
\]
converge uniformly for \(E\in I_0\). Thus, we can differentiate
the logarithm of \eqref{eq:Hadamard} term by term and obtain
\begin{align*}
 \frac{\dd}{\dd E}\log(4-\Delta_q(E)^2)
 &=\frac{m_0}{E}+\sum_{n\geq 0, E_n\neq 0}\frac1{E-E_n},\\
 \frac{\dd^2}{\dd E^2}\log(4-\Delta_q(E)^2)
 &=-\frac{m_0}{E^2}-\sum_{n\geq 0, E_n\neq 0}
   \frac1{(E-E_n)^2}<0.
\end{align*}
This proves \eqref{eq:Hill-concavity}.
\end{proof}

\section{Monondromy of the Lam\'e equation}\label{Sec 3}

In this section, first we briefly review some basic facts about the monodromy of the Lam\'e equation
\begin{equation}\label{eq:Lame}
 Y''(z)=\bigl[\eta(\eta+1)\wp(z)+B\bigr]Y(z),
\end{equation}
where $\eta>0$ satisifes $\eta\notin \mathbb{N}_{\geq 1}$, and $B\in\mathbb{C}$ is an accessory parameter.

Equation \eqref{eq:Lame} has a regular singularity at $0$ with local exponents $-\eta$ and $\eta+1$.  The monodromy representation of (\ref{eq:Lame}) is a group homomorphism $\rho:\pi_1(E_{\ii b}\setminus\{0\})\rightarrow
\mathrm{SL}(2,\mathbb{C})$ as follows.
Fix a base point $p_0\in E_{\tau}\setminus\{0\}$
and take $(Y_1(z), Y_2(z))$ to be a basis of local solutions of \eqref{eq:Lame} in a small neighborhood of $p_0$.
For any $\gamma\in\pi_1(E_{ib}\setminus\{0\})$,
we denote by $\gamma^*Y_j(z)$ to be the analytic continuation 
of $Y_j(z)$ along $\gamma$, then there is a matrix $M_{\gamma}(B)\in \mathrm{SL}(2,\mathbb{C})$
such that
\begin{equation}\label{eq:monodromyL}\gamma^*\begin{pmatrix}Y_{1}(z)\\Y_{2}(z)\end{pmatrix}=M_{\gamma}(B)\begin{pmatrix}Y_{1}(z)\\Y_{2}(z)\end{pmatrix},
\end{equation}
where $\det M_{\gamma}(B)=1$ follows from the simple fact that the Wronskian $Y_1Y_2'-Y_2Y_1'$ is a constant independent of $z$. The monodromy group is the image of $\rho:\pi_1(E_{\ii b}\setminus\{0\})\rightarrow
\mathrm{SL}(2,\mathbb{C})$, and is unique up to a common conjugation. In particular, $\tr M_{\gamma}(B)$ is independent of the choice of the basis of local solutions for any $\gamma\in \pi_1(E_{\ii b}\setminus\{0\})$.

Let $\gamma_h\in \pi_1(E_{\ii b}\setminus\{0\})$ (resp. $\gamma_v\in \pi_1(E_{\ii b}\setminus\{0\})$) be the fundamental cycle of $E_{\ii b}$ connecting $p_0$ and $p_0+1$ (resp. connecting $p_0$ and $p_0+\ii b$), and let
$\gamma_{0}\in \pi_1(E_{\ii b}\setminus\{0\})$ be a
simple loop encircling $0$ counterclockwise such that
\[
\gamma_h\gamma_v\gamma_h^{-1}\gamma_v^{-1}=\gamma_{0}\text{ \  \ in
}\pi_1(E_{\ii b}\setminus\{0\})  . \label{II-iv1}%
\]
Consequently, by writing $M_{h}(B)=M_{\gamma_h}(B)$ and $M_{v}(B)=M_{\gamma_v}(B)$ for convenience, we have
\begin{equation}\label{eqMhMnu}[M_h(B), M_v(B)]:=M_h(B)M_{v}(B)M_h(B)^{-1}M_v(B)^{-1}=M_{\gamma_0}(B).\end{equation}
Since the local exponents of \eqref{eq:Lame} at \(0\) are \(-\eta\) and
\(\eta+1\), the two eigenvalues of \(M_{\gamma_0}(B)\) are always
\(\e^{-2\pi\ii\eta}\) and
\(\e^{2\pi\ii(\eta+1)}=\e^{2\pi\ii\eta}\), so
\begin{equation}\label{eq:commutator}
 \tr[M_h(B),M_v(B)]=\tr M_{\gamma_0}(B)=2\cos(2\pi\eta).
\end{equation}
Noting that $M_{\gamma_h\gamma_v}(B)=M_h(B)M_v(B)$, we define
the entire trace functions
\begin{align}\label{eq:traces}
&\mathcal{X}(B):=\tr M_h(B),\qquad \mathcal{Y}(B):=\tr M_v(B),\\
&\mathcal{Z}(B):=\tr(M_{\gamma_h\gamma_v}(B))=\tr(M_h(B)M_v(B)),\nonumber
\end{align}
which are all independent of the choice of the basis of local solutions.

Now we need to use the following classical commutator trace formula.

\begin{lemma}[Commutator trace formula]\label{lem:Fricke}
If $A,C\in\mathrm{SL}(2,\C)$ and
$\mathcal{X}=\tr A$, $\mathcal{Y}=\tr C$, $\mathcal{Z}=\tr(AC)$, then
\begin{equation}\label{eq:Fricke}
 \tr[A,C]=\mathcal{X}^2+\mathcal{Y}^2+\mathcal{Z}^2-\mathcal{X}\mathcal{Y}\mathcal{Z}-2.
\end{equation}
\end{lemma}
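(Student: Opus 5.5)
The identity \eqref{eq:Fricke} is a polynomial identity in the entries of $A$ and $C$, so I will derive it from the Cayley--Hamilton theorem in $\mathrm{SL}(2,\C)$. For any $X\in\mathrm{SL}(2,\C)$, Cayley--Hamilton gives $X^2-(\tr X)X+I=0$. Multiplying by $X^{-1}$ yields $X^{-1}=(\tr X)I-X$, and in particular $\tr X^{-1}=\tr X$. Multiplying the same relation by an arbitrary $Y$ and taking traces gives the basic rule
\[
\tr(XY)+\tr(X^{-1}Y)=\tr X\,\tr Y .
\]
This single rule, applied repeatedly together with cyclicity of the trace, will do all the work.

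First, applying the rule to $X=AC$ and $Y=A^{-1}C^{-1}$ gives
\[
\tr[A,C]=\tr(AC)\tr(A^{-1}C^{-1})-\tr\big((AC)^{-1}A^{-1}C^{-1}\big).
\]
Since $\tr(A^{-1}C^{-1})=\tr((CA)^{-1})=\tr(CA)=\mathcal{Z}$, the first term equals $\mathcal{Z}^2$. For the second term I use cyclicity: $\tr(C^{-1}A^{-1}A^{-1}C^{-1})=\tr(C^{-2}A^{-2})$. By $\tr X^{-1}=\tr X$ applied to $X=A^2C^2$, this equals $\tr(A^2C^2)$.

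Next I expand $\tr(A^2C^2)$ using $A^2=\mathcal{X}A-I$ and $C^2=\mathcal{Y}C-I$. This gives
\[
\tr(A^2C^2)=\mathcal{X}\mathcal{Y}\mathcal{Z}-\mathcal{X}\tr A-\mathcal{Y}\tr C+\tr I=\mathcal{X}\mathcal{Y}\mathcal{Z}-\mathcal{X}^2-\mathcal{Y}^2+2.
\]
Substituting back, $\tr[A,C]=\mathcal{Z}^2-\mathcal{X}\mathcal{Y}\mathcal{Z}+\mathcal{X}^2+\mathcal{Y}^2-2$, which is \eqref{eq:Fricke}.

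There is no real obstacle here. The only step requiring care is the sign and inverse bookkeeping in reducing $\tr\big((AC)^{-1}A^{-1}C^{-1}\big)$ to $\tr(A^2C^2)$. As a sanity check I will verify the formula for $A=C=I$, where both sides equal $2$ (the right side is $4+4+4-8-2=2$). Alternatively, one could argue by Zariski density: reduce to diagonalizable $A=\mathrm{diag}(\lambda,\lambda^{-1})$ and check the identity directly by matrix multiplication. The Cayley--Hamilton route above avoids any density argument, so I will use it.
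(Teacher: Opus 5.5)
Your proof is correct. The rule $\tr(XY)+\tr(X^{-1}Y)=\tr X\,\tr Y$ with $X=AC$, $Y=A^{-1}C^{-1}$ gives $\tr[A,C]=\mathcal{Z}^2-\tr(A^2C^2)$. Expanding via $A^2=\mathcal{X}A-I$ and $C^2=\mathcal{Y}C-I$ then finishes it, and the inverse and cyclicity bookkeeping all checks out.

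The paper gives no argument of its own and simply cites Goldman. Your Cayley--Hamilton computation is the standard derivation of this Fricke identity, so it makes the citation self-contained and shows that only $\det A=\det C=1$ is used.
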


\begin{proof} This formula can be seen in \cite[\S2.2.2, (2.2.2.3)]{GoldmanFricke}.
\end{proof}

Combining \eqref{eq:commutator} and \eqref{eq:Fricke}, we obtain the identity
\begin{equation}\label{eq:Fricke-square}
(2\mathcal{Z}(B)-\mathcal{X}(B)\mathcal{Y}(B))^2=(4-\mathcal{X}(B)^2)(4-\mathcal{Y}(B)^2)-16\sin^2(\pi\eta)
\end{equation}
for any $B\in\C$.

To proceed, we use the basic property of the Weierstrass $\wp$ function $\wp(z)=\wp(z;\ii b)$ for a rectangular lattice:
\begin{equation}\label{eq:wp-real}
 \overline{\wp(\bar z)}=\wp(z).
\end{equation}
Define
\begin{equation}\label{eq:zb}
 z_b:=-\frac12-\frac{\ii b}{2},
\end{equation}
and
\begin{align*}&q_h(t):=\eta(\eta+1)\wp(z_b+t),\quad t\in\mathbb R,\\
&q_v(t):=-\eta(\eta+1)\wp(z_b+\ii t),\quad t\in\mathbb R.\end{align*}
Clearly $q_h(t)$ and $q_v(t)$ have no singularities on $\mathbb R$ and hence are smooth on $\mathbb R$.
Since
$$\overline{z_b+t}\equiv z_b+t,\quad\overline{z_b+\ii t}\equiv -(z_b+\ii t)\pmod{\mathbb Z+\mathbb Z\ii b},$$
it follows from \eqref{eq:wp-real} that both $q_h(t)$ and $q_v(t)$ are \emph{real-valued}. Note also that $q_h(t+1)=q_h(t)$ and $q_v(t+b)=q_v(t)$. Therefore, we can apply those results in Section \ref{sec:floquet} to Hill's equations
\begin{align}
 &-Y''(t)+q_h(t)Y(t)=(-B)Y(t),\qquad t\in\mathbb R,\label{eq:horizontal-Hill}\\
 &-Y''(t)+q_v(t)Y(t)=BY(t),\qquad t\in\R.\label{eq:vertical-Hill}
\end{align}
If $Y(z)$ is a solution of the Lam\'{e} equation \eqref{eq:Lame}, then $Y(z_b+t)$ is a solution of \eqref{eq:horizontal-Hill} and $Y(z_b+\ii t)$ is a solution of \eqref{eq:vertical-Hill} as a function of $t$. Consequently, we see from \eqref{eq:monodromyH}-\eqref{trace}, \eqref{eq:monodromyL} and \eqref{eq:traces} that
\begin{equation}\label{eq:Hill-traces}
 \mathcal{X}(B)=\Delta_{h}(-B),\qquad \mathcal{Y}(B)=\Delta_v(B),
\end{equation}
where $\Delta_h(-B):=\Delta_{q_h}(-B)$ is the Hill's discriminant of \eqref{eq:horizontal-Hill} and $\Delta_v(B):=\Delta_{q_v}(B)$ is the Hill's discriminant of \eqref{eq:vertical-Hill}. In particular, Theorem \ref{thm: Hill} $(i)$ implies
\begin{equation}\label{eq:xy-b}
\mathcal{X}(B)\in\mathbb R,\quad \mathcal{Y}(B)\in\mathbb R,\quad\forall B\in\mathbb R.
\end{equation}
Remark that $\mathcal{Z}(B)$ can not be real-valued for $B\in\mathbb R$ in general.
Define
\begin{align}\label{eq:PCG}
& P=P(B):=(4-\mathcal{X}(B)^2)(4-\mathcal{Y}(B)^2),\\
&G=G(B):=2\mathcal{Z}(B)-\mathcal{X}(B)\mathcal{Y}(B),\nonumber\\
& C_\eta:=16\sin^2(\pi\eta)>0,\quad\text{thanks to }\eta\notin\mathbb{Z}.\nonumber
\end{align}
Then \eqref{eq:Fricke-square} is just
\begin{equation}\label{eq:G-square}
 G(B)^2=P(B)-C_\eta,
\end{equation}
and $G(B)^2$ is real-valued for $B\in\mathbb{R}$ although $G(B)$ might not.
We need to establish the following important lemma.

\begin{lemma}\label{lem:no-positive}
Let $I$ be a connected component of
\begin{equation}\label{eq:common-stability}
 \{B\in\R\;:\; \mathcal{X}(B)\in (-2,2),\; \mathcal{Y}(B)\in (-2,2)\}.
\end{equation}
Then $0<P(B)\le C_\eta$ for any $B\in I$. Furthermore, if $P(B_0)=C_\eta$ for some $B_0\in I$, then $B_0$ is the
unique equality point in $I$ and
\begin{equation}\label{eq:strict-contact}
 P'(B_0)=0,\qquad P''(B_0)<0.
\end{equation}
\end{lemma}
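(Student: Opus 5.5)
The plan is to combine the strict log-concavity of Lemma \ref{lem:Hill-concavity} with the identity \eqref{eq:G-square} and an analytic-continuation argument for $\mathcal{Z}(B)$. Positivity is immediate: on $I$ we have $\mathcal{X}(B),\mathcal{Y}(B)\in(-2,2)$, so both factors $4-\mathcal{X}(B)^2$ and $4-\mathcal{Y}(B)^2$ are positive. Hence $P(B)>0$.

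\emph{Step 1: the geometry of $I$.} First I check that $I$ is a bounded open interval with $P\to 0$ at both endpoints. By \eqref{eq:Hill-traces} and Theorem \ref{thm: Hill} $(iv)$, $|\mathcal{X}(B)|<2$ forces $-B\ge E_0^h$, the lowest periodic eigenvalue of \eqref{eq:horizontal-Hill}. Likewise $|\mathcal{Y}(B)|<2$ forces $B\ge E_0^v$. So $I\subset[E_0^v,-E_0^h]$ is bounded. At each endpoint of $I$ one of $\mathcal{X},\mathcal{Y}$ equals $\pm 2$, so $P$ extends continuously to $\bar I$ with value $0$ at both endpoints.

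\emph{Step 2: strict log-concavity of $P$ on $I$.} Write
\[
\log P(B)=\log\bigl(4-\Delta_h(-B)^2\bigr)+\log\bigl(4-\Delta_v(B)^2\bigr).
\]
Apply Lemma \ref{lem:Hill-concavity} to $q_h$ and $q_v$; the reflection $B\mapsto -B$ does not change the sign of the second derivative. This gives $(\log P)''<0$ on $I$.

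\emph{Step 3: the bound $P\le C_\eta$ (the main step).} Suppose instead that $P(B_1)>C_\eta$ for some $B_1\in I$. On a small real interval $J\ni B_1$ we then have $G(B)^2=P(B)-C_\eta>0$ by \eqref{eq:G-square}. Since $G$ is continuous and nonvanishing on $J$ and $G^2$ is positive there, $G$ is real-valued on $J$. Because $\mathcal{X},\mathcal{Y}$ are real on $\R$ by \eqref{eq:xy-b}, this means $\mathcal{Z}=\tfrac12(G+\mathcal{X}\mathcal{Y})$ is real on $J$.

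Now $\mathcal{Z}$ is entire, so the function $\mathcal{Z}(B)-\overline{\mathcal{Z}(\bar B)}$ is entire and vanishes on $J$, hence vanishes identically. Therefore $\mathcal{Z}$, and with it $G$, is real on all of $\R$. Then \eqref{eq:G-square} gives $P(B)\ge C_\eta>0$ for every real $B$. This contradicts Step 1, since $P\to 0$ at the endpoints of $I$. Hence $P\le C_\eta$ on $I$.

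\emph{Step 4: the equality case.} If $P(B_0)=C_\eta$ with $B_0\in I$, then $B_0$ is a global maximum of $P$ on $I$, so $P'(B_0)=0$. A strictly concave function (here $\log P$) has at most one maximum point, so $B_0$ is the unique point of $I$ where equality holds. Finally,
\[
(\log P)''=\frac{P''}{P}-\Bigl(\frac{P'}{P}\Bigr)^2 ,
\]
and at $B_0$ this reduces to $P''(B_0)/C_\eta$. Step 2 says this is negative, which yields \eqref{eq:strict-contact}.

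I expect Step 3 to be the only delicate point. It needs the observation that realness of $G$ on a small interval propagates to all of $\R$ by analyticity, together with the boundedness of $I$ from Step 1, which ensures $P$ actually drops below $C_\eta$ near the endpoints.
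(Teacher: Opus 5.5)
Your proof is correct. Steps 1, 2 and 4 agree with the paper in substance; the paper gets $P'(B_0)=0$ from $P'=2GG'$ and $G(B_0)=0$ rather than from $B_0$ being an interior maximum, which makes no difference. Step 3, however, takes a genuinely different route. The paper argues locally on $I$: if $P>C_\eta$ somewhere, it takes a component $(a,d)$ of $\{P>C_\eta\}$ with $[a,d]\Subset I$. Strict concavity of $\log P$ forces $P'(a)>0>P'(d)$, whereas $G(a)=G(d)=0$ and $P'=2GG'$ force $P'(a)=P'(d)=0$. You instead propagate realness. On $\{P>C_\eta\}$ the number $G$ is a square root of a positive real, hence real, so $\mathcal Z$ is real on an interval. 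By the identity theorem applied to $\mathcal Z(B)-\overline{\mathcal Z(\bar B)}$, it is then real on all of $\R$, so $P\ge C_\eta$ on $\R$, contradicting the vanishing of $P$ at the endpoints of $I$. Your version does not use Lemma~\ref{lem:Hill-concavity} for the bound at all; concavity enters only in the equality case. It also yields more. The argument works verbatim for any real $B_1$, not just $B_1\in I$, so $P\le C_\eta$ on the whole real axis. Moreover, $\operatorname{Im}\mathcal Z$ then has only isolated real zeros, so $G$ is purely imaginary along $\R$, i.e.\ $2\operatorname{Re}\mathcal Z=\mathcal X\mathcal Y$ there. The paper's route stays inside $I$ and uses only the local identity $P'=2GG'$, at the cost of needing concavity already for the inequality. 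An even shorter argument is also available: $P-C_\eta=G^2$ with $G$ entire has only even-order real zeros, so it cannot change sign along $\R$, and it is negative wherever $P=0$.
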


\begin{proof} 
By Lemma~\ref{lem:Hill-concavity} and \eqref{eq:Hill-traces}, we get that for any $B\in I$,
\begin{equation}\label{eq:logP}
 (\log P(B))''=
 \bigl[\log(4-\Delta_h^2)\bigr]''(-B)
 +\bigl[\log(4-\Delta_v^2)\bigr]''(B)<0.
\end{equation}
Thus \(f(B):=\log P(B)\) is strictly concave on \(I\).

By Theorem \ref{thm: Hill} $(iv)$ and \eqref{eq:Hill-traces}, there is a large $\widetilde B>0$ such that
$$\mathcal{X}(B)=\Delta_{h}(-B)>2,\quad\forall B>\widetilde B,$$
$$\mathcal{Y}(B)=\Delta_{v}(B)>2,\quad\forall B<-\widetilde B.$$
This implies that as a connected component of the set defined in \eqref{eq:common-stability},
 \(I=(\alpha,\beta)\) is a bounded open interval.  By the maximality of the component
and continuity of \(\mathcal{X}(B)\) and \(\mathcal{Y}(B)\), at each endpoint of $I$ at least one of the
equalities \(\mathcal{X}=\pm 2\) or \(\mathcal{Y}=\pm 2\) holds, so
\begin{equation}\label{eq:P-endpoints}
 P(\alpha)
 =P(\beta)=0.
\end{equation}

Suppose \(P(B)>C_\eta>0\) for some $B\in I$.  By \eqref{eq:P-endpoints}, there is a connected component
\((a,d)\) of \(\{P>C_\eta\}\) containing this $B$ such that
\begin{equation}\label{eq:papd}
 [a,d]\Subset I,\qquad P(a)=P(d)=C_\eta.
\end{equation}
Take any \(\xi\in(a,d)\), then the strict concavity implies
\begin{align*}
 f'(a)\ge \frac{f(\xi)-f(a)}{\xi-a}>0,\quad
 f'(d)\le \frac{f(d)-f(\xi)}{d-\xi}<0.
\end{align*}
  Since \(P>0\) and $P'=Pf'$ on \(I\), so
$$P'(a)>0,\qquad P'(d)<0.$$

On the other hand, \eqref{eq:G-square} and \eqref{eq:papd} imply \(G(a)^2=G(d)^2=0\),
i.e., \(G(a)=G(d)=0\).  Differentiating  \eqref{eq:G-square}  gives
\[
 P'(a)=2G(a)G'(a)=0,\qquad
 P'(d)=2G(d)G'(d)=0,
\]
a contradiction.  Thus \(P(B)\le C_\eta\) for any \(B\in I\).

Suppose $P(B_0)=C_\eta>0$ for some $B_0\in I$.  Then \eqref{eq:G-square} implies
\(G(B_0)=0\) and so
\(P'(B_0)=2G(B_0)G'(B_0)=0\).  Together with $P'=Pf$, we get \(f'(B_0)=0\).
Because \(f''(B)<0\) on $I$, the function \(f'\) is strictly decreasing, so it
can vanish at most once.  This proves that $B_0$ is the unique point in $I$ such that $P=C_\eta$ holds.
Finally, differentiating \(P=\exp f\) twice gives
\[
 P''=P\bigl(f''+(f')^2\bigr).
\]
Since \(f'(B_0)=0\), we get
$
P''(B_0)
 =P(B_0)f''(B_0)<0.
$
This completes the proof.
\end{proof}

\section{Axisymmetry and Lam\'e transversality}\label{Sec 4}

Let \(\rho=8\pi\eta\notin8\pi\N_{\geq 1}\) and $u(z)$ be a solution of the singular Liouville equation \eqref{eq:mfe}. 
First, we briefly review its basic connection with the Lam\'{e} equation \cite[Section 3]{ChaiLinWang}.
Define
\begin{equation}\label{eq:Tdef}
 T(z)=T_u(z):=u_{zz}(z)-\frac12u_z(z)^2.
\end{equation}
Using $u_{z\bar z}=-\e^u/4$, we get
\[
 T_{\bar z}=u_{zz\bar z}-u_zu_{z\bar z}=-\frac14\e^uu_z+\frac14\e^uu_z=0,\quad z\in E_{\ii b}\setminus\{0\},
\]
so $T$ is holomorphic in $E_{\ii b}\setminus\{0\}$.  By \eqref{ulog},  we see that
$$T(z)=\frac{-2\eta(\eta+1)}{z^2}+O(z^{-1}),\quad\text{near }z=0,$$
so $T(z)$ is elliptic function with a unique (double) pole at $0$.
Furthermore, the residue of $T(z)$ at $0$ is zero because an
elliptic function has total residue zero.  Hence there is $B_0=B_0(u)\in \mathbb{C}$ such that
\begin{equation}\label{eq:TLame}
 T(z)=-2\bigl[\eta(\eta+1)\wp(z)+B_0\bigr].
\end{equation}

On the other hand,
the Liouville theorem says that there
is a local meromorphic function $f(z)$ away from the singularity $0$ such that%
\begin{equation}
u(z)=\log \frac{8|f^{\prime}(z)|^{2}}{(1+|f(z)|^{2})^{2}}. \label{502}
\end{equation}
We remark that the classical Liouville theorem holds only for the case when
the domain is simply connected and the equation does not include any
singularity. For our present case, see \cite[Section 1]{ChaiLinWang} for a proof.

This $f(z)$ is called a developing map of $u(z)$ or the spherical metric $g=\frac12 e^{u}|dz|^2$. By differentiating (\ref{502}), we
have
\begin{equation}
\mathcal{S}(f):=\frac{f^{\prime\prime\prime}
}{f^{\prime}}-\frac{3}{2}\left(  \frac{f^{\prime \prime}%
}{f^{\prime}}\right)  ^{2}=u_{zz}-\frac{1}{2}u_{z}^{2}=T. \label{new22}%
\end{equation}
Conventionally, $\mathcal{S}(f)$ is called the Schwarzian derivative
of $f(z)$. 
Since $\mathcal{S}(f)=T(z)=-2[\eta(\eta+1)\wp(z)+B_0]$, a classical result in complex analysis says that there exist a local basis of solutions $Y_1(z), Y_2(z)$ (in a small neighborhood of the base point $p_0$) of the corresponding Lam\'{e} equation
\begin{equation}\label{eq:Lame0}
 Y''(z)=\bigl[\eta(\eta+1)\wp(z)+B_0\bigr]Y(z),
\end{equation}
 such that
\begin{equation}\label{fy}f(z)=\frac{Y_1(z)}{Y_2(z)}.\end{equation}
For any $\gamma\in\pi_1(E_{\ii b}\setminus\{0\})$, the analytic
continuation gives
\[
\gamma^*
\begin{pmatrix}
Y_1\\ Y_2
\end{pmatrix}
=
M_\gamma(B_0)
\begin{pmatrix}
Y_1\\ Y_2
\end{pmatrix},
\qquad
M_\gamma(B_0)\in\mathrm{SL}(2,\mathbb C).
\]
Hence, if
\[
M_\gamma(B_0)=
\begin{pmatrix}
a_\gamma&b_\gamma\\
c_\gamma&d_\gamma
\end{pmatrix},
\]
then
\[
\gamma^*f=f\circ\gamma
=\frac{a_\gamma f+b_\gamma}{c_\gamma f+d_\gamma}.
\]
The matrices $M_\gamma(B_0)$ and $-M_\gamma(B_0)$ induce the same
M\"obius transformation. Therefore the action on $f$ depends only on
the projective class of $M_\gamma(B_0)$. Thus the natural projection
\[
\pi:\mathrm{SL}(2,\C)\longrightarrow\mathrm{PSL}(2,\C)
\]
defines the projective monodromy representation
\[
\varrho(\gamma):=\pi\bigl(M_\gamma(B_0)\bigr),
\qquad
\gamma^*f=\varrho(\gamma)\circ f=\frac{a_\gamma f+b_\gamma}{c_\gamma f+d_\gamma}.
\]

The Wronskian
$W:=Y_{1}(z)Y_2'(z)-Y_1'(z)Y_2(z)$ is a nonzero constant. By replacing $(Y_1(z), Y_2(z))$ with $(Y_1(z)/W^{\frac12}, Y_2(z)/W^{\frac12})$ if necessary, we may always assume $W\equiv 1$. By inserting (\ref{fy}) into (\ref{502}), a direct computation leads to
\begin{equation}\label{eq:wy1y2} e^{-\frac{1}{2}u(z)}=\frac{1}{2\sqrt{2}}(|Y_1(z)|^2+|Y_2(z)|^2).\end{equation}
Since $u(z)$ is single-valued and doubly periodic, we immediately see that for any $\gamma\in\pi_1(E_{\ii b}\setminus\{0\})$, the monodromy matrix $M_{\gamma}(B_0)$ with respect to $(Y_1(z), Y_2(z))$ satisfies
\begin{equation}\label{eq:y1y2}|Y_1(z)|^2+|Y_2(z)|^2=(\overline{Y_1(z)},\overline{Y_2(z)})\overline{M_{\gamma}(B_0)}^{\mathsf T}M_{\gamma}(B_0)\begin{pmatrix}Y_1(z)\\Y_2(z)\end{pmatrix},\end{equation}
so $M_{\gamma}(B_0)\in \mathrm{SU}(2)$ is a unitary matrix. Consequently, the monodromy group of \eqref{eq:Lame0} with respect to $(Y_1(z), Y_2(z))$ is a subgroup of
 $\mathrm{SU}(2)$, namely the monodromy is unitarizable. Since the trace of any unitary matrix belongs to $[-2,2]$, we conclude from \eqref{eq:traces} that
\begin{equation}\label{eq:traces0}
\mathcal{X}(B_0)\in [-2,2],\quad \mathcal{Y}(B_0)\in [-2,2],\quad
\mathcal{Z}(B_0)\in [-2,2].
\end{equation}
In particular, the projective monodromy group of the developing map $f(z)$ is contained in $\mathrm{PSU}(2)$.

Now we prove the symmetry of $u(z)$.

\begin{lemma}\label{symmetric lem}
\label{lem:reflection-rigidity} We have $B_0\in\mathbb R$ and
\[
u(z)=u(-z)= u(\bar z),
\]
namely $u(z)$ is axisymmetric.
\end{lemma}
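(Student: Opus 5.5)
We prove the three claims in order: $B_0\in\R$, evenness of $u$, and reflection symmetry $u(z)=u(\bar z)$.

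\emph{Step 1: $B_0\in\R$.} By \eqref{eq:traces0}, $\mathcal X(B_0)=\Delta_h(-B_0)\in[-2,2]$, using \eqref{eq:Hill-traces}. Since $q_h$ is real-valued and periodic, Theorem~\ref{thm: Hill}~$(i)$ applies: if $-B_0\notin\R$, then $\Delta_h(-B_0)\notin[-2,2]$, a contradiction. Hence $B_0\in\R$. (The vertical discriminant gives the same conclusion.)

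\emph{Step 2: evenness.} This is \cite[Lemma~4.3]{LinWang}, valid because $\rho\notin 8\pi\N_{\ge1}$. Alternatively, one can argue directly as in Step 3, since $\wp$ is even and $u(-z)$ is again a solution with the same $T$.

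\emph{Step 3: $u(z)=u(\bar z)$.} Set $\tilde u(z):=u(\bar z)$. Because the lattice is rectangular, $\tilde u$ is again a solution of \eqref{eq:mfe} on $E_{\ii b}$. We compute its projective connection:
\[
T_{\tilde u}(z)=\overline{T_u(\bar z)}=-2\bigl[\eta(\eta+1)\overline{\wp(\bar z)}+\overline{B_0}\bigr]=T_u(z),
\]
by \eqref{eq:wp-real} and $B_0\in\R$. So $u$ and $\tilde u$ share the same Schwarzian, and their developing maps satisfy $\tilde f=\sigma\circ f$ for some $\sigma\in\mathrm{PSL}(2,\C)$. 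Both metrics are spherical with $\mathrm{PSU}(2)$ monodromy, and by \eqref{eq:wy1y2} we may write
\[
e^{-\tilde u/2}=\tfrac{1}{2\sqrt2}\,(\overline{Y},\ H\,Y)
\]
with $H=\overline{S}^{\mathsf T}S$ positive definite of determinant one, where $S$ lifts $\sigma$. The monodromy matrices $M_\gamma(B_0)$ must preserve both the identity form and $H$. If $H\neq I$, then $H$ has two distinct eigenlines. In that case every $M_\gamma$ is diagonal in a common unitary basis, so the monodromy group is abelian and $[M_h,M_v]=I$. This contradicts \eqref{eq:commutator}, since $\tr M_{\gamma_0}=2\cos(2\pi\eta)\neq2$ for $\eta\notin\mathbb Z$. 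Therefore $H=I$ (up to scaling, fixed by $\det=1$), and $\tilde u=u$.

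The main obstacle is the non-coaxiality argument in Step 3. Two points need care there. First, one must justify that the same basis $(Y_1,Y_2)$ can be used for both metrics; this works because $\tilde u$ gives the same Lam\'e equation \eqref{eq:Lame0}. Second, one must verify that preserving a second Hermitian form $H\neq I$ really forces the group to be abelian. This holds because the unitary matrices commuting with a non-scalar $H$ form a torus, and the commutator of two elements of a torus is trivial.
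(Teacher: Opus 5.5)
Your proposal is correct and follows the paper's proof essentially step for step. You get $B_0\in\R$ from Theorem~\ref{thm: Hill}~$(i)$ applied to $\mathcal X(B_0)=\Delta_h(-B_0)\in[-2,2]$, and evenness from \cite[Lemma~4.3]{LinWang}. Reflection symmetry then comes from $T_{\tilde u}=T_u$, a determinant-one change of basis (your $S$, the paper's $A$) with $H=\bar A^{\mathsf T}A$ commuting with the unitary monodromy, and the contradiction $\tr M_{\gamma_0}=2\cos(2\pi\eta)\neq2$ if $H$ is non-scalar.

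Your alternative for Step 2 is also valid. Since $\wp$ is even, $T_{u(-z)}=T_u$ holds even for complex $B_0$, so the same rigidity argument would make the citation of Lin--Wang unnecessary.
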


\begin{proof}
Since \eqref{eq:Hill-traces} and \eqref{eq:traces0} imply $\Delta_h(-B_0)=\mathcal{X}(B_0)\in [-2,2]$, it follows from Theorem \ref{thm: Hill} $(i)$ that 
\(B_0\in\R\).
On the other hand, thanks to $\eta\notin\mathbb{Z}$, it was proved in \cite[Lemma~4.3]{LinWang} that
$u(z)$ is even, i.e., $u(-z)=u(z).$

Thus, it suffices to prove $u(z)=u(\bar z)$. Since $E_{\ii b}$ is a rectangular torus,
we see that
\[
 \tilde{u}(z):=u(\bar z)
\]
is also a solution of \eqref{eq:mfe}.  Furthermore, it follows from \eqref{eq:wp-real} and \(B_0\in\R\) that
\begin{align}
 T_{\tilde u}(z)
 &=\overline{T_u(\bar z)}
 =-2\bigl[\eta(\eta+1)\overline{\wp(\bar z)}+\overline {B_0}\bigr]\nonumber\\
  &=-2\bigl[\eta(\eta+1)\wp(z)+B_0\bigr]
  =T_u(z),\label{eq:reflected-T}
\end{align}
namely $u(z)$ and $\tilde u(z)$ corresponds to the same $B_0\in\mathbb R$.

By repeating the arguments \eqref{502}-\eqref{eq:y1y2}, there is a local basis of solutions $\tilde Y_1(z), \tilde Y_2(z)$ of the same Lam\'{e} equation
\eqref{eq:Lame0} such that $\tilde Y_{1}(z)\tilde Y_2'(z)-\tilde Y_1'(z)\tilde Y_2(z)\equiv1$,
$\tilde f(z):=\tilde Y_1(z)/\tilde Y_2(z)$ is a developing map of $\tilde u(z)$,
\begin{equation}\label{eq:0wy1y2} e^{-\frac{1}{2}\tilde u(z)}=\frac{1}{2\sqrt{2}}(|\tilde Y_1(z)|^2+|\tilde Y_2(z)|^2),\end{equation}
 and the monodromy matrices $\tilde M_\gamma (B_0)$ with respect to 
$(\tilde Y_1(z), \tilde Y_2(z))$ are also unitary matrices.

Clearly there is an invertible matrix $A$ such that
\begin{equation}\label{eqy1y2w}\begin{pmatrix}\tilde Y_1(z)\\ \tilde Y_2(z)\end{pmatrix}=A\begin{pmatrix}Y_1(z)\\Y_2(z)\end{pmatrix}\quad\text{and}\quad \det A=1.\end{equation}
Then
\begin{equation}\label{eqmb0}\tilde M_\gamma (B_0)=AM_\gamma (B_0) A^{-1},\quad\forall \gamma\in \pi_1(E_{\ii b}\setminus\{0\}).\end{equation}
Set
\[
 H:=\bar{A}^{\mathsf T}A.
\]
Using $\tilde M_\gamma (B_0), M_\gamma (B_0)\in \mathrm{SU}(2)$, we easily see from \eqref{eqmb0} that
\begin{equation}\label{eq:H-invariant}
 HM_\gamma (B_0)=M_\gamma (B_0) H, \quad\forall \gamma\in \pi_1(E_{\ii b}\setminus\{0\}).
\end{equation}
The matrix \(H\) is positive definite Hermitian and so can be diagonalized as
$$A_1HA_1^{-1}=\begin{pmatrix}\lambda_1&\\ &\lambda_2\end{pmatrix}$$
for some $\lambda_j>0$ and invertible matrix $A_1$. Suppose $\lambda_1\neq \lambda_2$, then  \eqref{eq:H-invariant} implies
$$\begin{pmatrix}\lambda_1&\\ &\lambda_2\end{pmatrix}A_1M_\gamma (B_0)A_1^{-1}=A_1M_\gamma (B_0)A_1^{-1}\begin{pmatrix}\lambda_1&\\ &\lambda_2\end{pmatrix},$$
so $A_1M_\gamma (B_0)A_1^{-1}$ is diagonal for any $\gamma\in \pi_1(E_{\ii b}\setminus\{0\})$. But this together with \eqref{eqMhMnu} implies that
$M_{\gamma_0}(B_0)$ is the identity matrix $I_2=\mathrm{diag}(1,1)$, a contradiction with $\mathrm{tr}M_{\gamma_0}(B_0)=2\cos(2\pi\eta)\neq 2$.

Thus, \(\bar{A}^{\mathsf T}A=H=\lambda I_2\) for some \(\lambda>0\), and $\det A=1$ implies $\lambda=1$, i.e.,
\(\bar{A}^{\mathsf T}A=I_2\).  From here and \eqref{eqy1y2w}, we obtain
$$|\tilde Y_1(z)|^2+|\tilde Y_2(z)|^2=|Y_1(z)|^2+|Y_2(z)|^2.$$
This proves $u(z)=\tilde u(z)=u(\bar z)$ by using \eqref{eq:wy1y2} and \eqref{eq:0wy1y2}.
\end{proof}

\begin{corollary}\label{coro:11}
Let $n\in\mathbb N_{\ge 1}$ and $\rho=8\pi\eta$ with $\eta\in (n-1,n)$. Then there is a one-to-one correspondence between solutions of \eqref{eq:mfe} and those $B$'s such that the monodromy of \eqref{eq:Lame} is unitarizable.
\end{corollary}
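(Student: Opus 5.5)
The plan is to construct the map $u\mapsto B_0(u)$ and show that it is a bijection onto the set of $B$ for which the monodromy of \eqref{eq:Lame} is unitarizable.

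\emph{Well-definedness and image.} Given a solution $u$ of \eqref{eq:mfe}, formula \eqref{eq:TLame} determines $B_0(u)$ uniquely, because $T_u$ is determined by $u$. The discussion leading to \eqref{eq:y1y2} shows that the monodromy of \eqref{eq:Lame0} is unitarizable, so $B_0(u)$ lies in the target set.

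\emph{Injectivity.} Let $u$ and $\hat u$ be two solutions with $B_0(u)=B_0(\hat u)$. I will repeat verbatim the argument in the proof of Lemma \ref{symmetric lem}. Take normalized bases $(Y_1,Y_2)$ and $(\hat Y_1,\hat Y_2)$ of the same Lam\'e equation with Wronskian $1$ and unitary monodromy. They are related by some $A\in \mathrm{SL}(2,\C)$, and then $H=\bar A^{\mathsf T}A$ commutes with the whole monodromy group. If $H$ were not scalar, the group would be simultaneously diagonalizable. The commutator relation \eqref{eqMhMnu} would then force $M_{\gamma_0}=I_2$, which contradicts $\tr M_{\gamma_0}=2\cos(2\pi\eta)\neq 2$ (recall $\eta\notin\mathbb Z$). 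Hence $A$ is unitary, and by \eqref{eq:wy1y2} we get $e^{-u/2}=e^{-\hat u/2}$, that is, $u=\hat u$. Note that this step does not need $\tau=\ii b$.

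\emph{Surjectivity.} Let $B$ be such that the monodromy is unitarizable. Choose a basis $(Y_1,Y_2)$ with Wronskian $1$ whose monodromy lies in $\mathrm{SU}(2)$; unitarizable monodromy in $\mathrm{SL}(2,\C)$ can be conjugated into $\mathrm{SU}(2)$ with determinant preserved. Define $u$ through \eqref{eq:wy1y2}:
\[
u=-2\log\Bigl(\tfrac{1}{2\sqrt2}\bigl(|Y_1|^2+|Y_2|^2\bigr)\Bigr).
\]
Because the monodromy is unitary, $u$ is single-valued and doubly periodic on $E_{\ii b}\setminus\{0\}$. Since $Y_1,Y_2$ cannot vanish simultaneously (their Wronskian is $1$), $u$ is smooth there. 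A direct computation, reversing the one behind \eqref{502} with $f=Y_1/Y_2$ and $f'=-1/Y_2^2$, gives $\Delta u+e^u=0$ away from $0$.

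\emph{Behaviour at $0$: the main obstacle.} The remaining task is to check that $u$ has the singularity $4\eta\log|z|+O(1)$ at $0$, so that $\Delta u+e^u=\rho\delta_0$ holds in the sense of distributions. Near $0$, I will use a local basis of Frobenius solutions $z^{-\eta}g_1(z)$ and $z^{\eta+1}g_2(z)$ with $g_j(0)\neq0$. Because $2\eta+1\notin\mathbb Z$ there are no logarithmic terms. Moreover $M_{\gamma_0}$ is diagonal in this basis with distinct eigenvalues $e^{\pm 2\pi\ii\eta}$, since $\eta\notin\frac12\mathbb Z$; the half-integer case is excluded as well by $\eta\notin\mathbb Z$ unless $2\eta$ is odd, and in that case I will argue separately that the absence of logarithms together with unitarity still gives a unitary basis adapted to the local exponents.

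Unitarity of $M_{\gamma_0}$ with distinct eigenvalues means its eigenvectors are orthogonal. So after a unitary change, which preserves $|Y_1|^2+|Y_2|^2$, the pair $(Y_1,Y_2)$ becomes $(c_1z^{-\eta}g_1,\;c_2z^{\eta+1}g_2)$. Then
\[
|Y_1|^2+|Y_2|^2=|c_1|^2|z|^{-2\eta}\bigl(|g_1|^2+O(|z|^{4\eta+2})\bigr),
\]
which gives $u=4\eta\log|z|+O(1)$, with the $O(1)$ term continuous. The distributional identity follows since $\Delta\log|z|=2\pi\delta_0$, and $4\eta\cdot 2\pi=8\pi\eta=\rho$.

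Finally, by construction $T_u=-2[\eta(\eta+1)\wp+B]$ through \eqref{new22}, so $B_0(u)=B$. This proves surjectivity and completes the bijection.
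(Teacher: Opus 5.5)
Your route is the paper's own. Injectivity uses the Hermitian-form argument from the proof of Lemma~\ref{lem:reflection-rigidity}. Surjectivity builds $u$ from an $\mathrm{SU}(2)$-normalized, Wronskian-one basis via \eqref{eq:wy1y2}, and you write out the local analysis at $0$ that the paper delegates to Chai--Lin--Wang.

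\textbf{The gap.} The case $\eta=n-\tfrac12$ lies in $(n-1,n)$, so it is not ``excluded'', and you only promise to treat it separately. There the exponent difference $2\eta+1=2n$ is an integer, so the absence of logarithms is not automatic. By Remark~\ref{rmk1-5}, logarithms do occur for $B$ off the zeros of $P_n$. Your orthogonal-eigenvector step is also unavailable, since $e^{-2\pi\ii\eta}=e^{2\pi\ii\eta}=-1$.

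\textbf{How to close it.} Unitarizability settles both points. $M_{\gamma_0}$ is conjugate to a unitary matrix whose only eigenvalue is $-1$, hence $M_{\gamma_0}=-I_2$. A logarithmic term would make $M_{\gamma_0}$ a nontrivial Jordan block, so there is none. Hence there are local solutions $Y_a=z^{-\eta}g_1$ and $Y_b=z^{\eta+1}g_2$ with $g_j$ holomorphic and $g_j(0)\neq0$.

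After that, no adapted unitary basis is needed, in this case or the generic one. Write $Y_j=a_jY_a+b_jY_b$; then $(a_1,a_2)\neq(0,0)$ because $(Y_1,Y_2)$ is a basis, and
\[
|Y_1|^2+|Y_2|^2=|z|^{-2\eta}\Bigl((|a_1|^2+|a_2|^2)|g_1(z)|^2+O\bigl(|z|^{2\eta+1}\bigr)\Bigr).
\]
So $v:=u-4\eta\log|z|$ extends continuously to $0$ for every $\eta\in(n-1,n)$. In the generic case this also makes your orthogonality argument unnecessary.

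\textbf{The distributional step.} You should also make the last step explicit. The function $v$ is bounded and satisfies $\Delta v=-|z|^{4\eta}e^{v}\in L^\infty$ on the punctured disk. Hence the point singularity is removable: subtract a Newtonian potential and use that bounded harmonic functions extend across points. Then $\Delta v=-e^u$ holds distributionally across $0$, and only at that point does $\Delta\log|z|=2\pi\delta_0$ yield $\Delta u+e^u=\rho\delta_0$. With these additions your proof is complete.
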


\begin{proof}
Take $B$ such that the monodromy of \eqref{eq:Lame} is unitarizable. Then there is a local basis of solutions $Y_1(z), Y_2(z)$ such that the monodromy matrices $M_\gamma(B)\in \mathrm{SU}(2)$. Define $f:=Y_1/Y_2$ and define $u$ via this $f$ by the Liouville formula \eqref{502}. Then it is easy to prove that $u(z)$ is a solution of \eqref{eq:mfe} corresponding to this $B$; see e.g. \cite[Section 3]{ChaiLinWang} for details. Finally, the proof of Lemma \ref{symmetric lem} implies that the solution of \eqref{eq:mfe} corresponding to this $B$ is unique. This completes the proof.
\end{proof}

The main result of this section is the following Lam\'e transversality, which plays a crucial in the proof of the nondegeneracy in the next section.

\begin{theorem}[Lam\'e transversality]\label{thm:transversality}
We have
\begin{equation}\label{eq:mixed-transverse}
 \frac{\dd}{\dd B}\mathrm{Im}\tr(M_h(B)M_v(B))\bigg|_{B=B_0}\ne0.
\end{equation}
\end{theorem}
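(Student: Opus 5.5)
Here $B_0$ is the real accessory parameter attached to a solution $u$, as in Lemma \ref{symmetric lem}. The plan is to read \eqref{eq:mixed-transverse} as a statement about $G(B)=2\mathcal{Z}(B)-\mathcal{X}(B)\mathcal{Y}(B)$. For real $B$ the traces $\mathcal{X}(B),\mathcal{Y}(B)$ are real by \eqref{eq:xy-b}, so
\[
\mathrm{Im}\,\mathcal{Z}(B)=\tfrac12\,\mathrm{Im}\,G(B),\qquad B\in\R .
\]
Hence it suffices to show $\frac{\dd}{\dd B}\mathrm{Im}\,G(B)\big|_{B_0}\neq 0$.

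\textbf{Step 1: locate $B_0$ and evaluate $P(B_0)$.} By \eqref{eq:traces0}, $\mathcal{X}(B_0),\mathcal{Y}(B_0),\mathcal{Z}(B_0)\in[-2,2]$, so $G(B_0)$ is real. Since $P(B_0)-C_\eta=G(B_0)^2\ge0$ and $C_\eta>0$, we get $P(B_0)\ge C_\eta>0$, so $\mathcal{X}(B_0),\mathcal{Y}(B_0)$ both lie in $(-2,2)$ or both equal $\pm2$. The latter case needs to be excluded. If $\mathcal{X}(B_0)=\pm2$ with $M_h\in\mathrm{SU}(2)$, then $M_h=\pm I$, so $M_h$ commutes with $M_v$ and the commutator $M_{\gamma_0}$ would be $I$, contradicting $\tr M_{\gamma_0}=2\cos(2\pi\eta)\ne2$. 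Thus $B_0$ lies in a component $I$ of \eqref{eq:common-stability}. Lemma \ref{lem:no-positive} then gives $P(B_0)\le C_\eta$, hence $P(B_0)=C_\eta$, $G(B_0)=0$, and by \eqref{eq:strict-contact}, $P'(B_0)=0$ and $P''(B_0)<0$.

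\textbf{Step 2: expand $G$ near $B_0$.} Since $G$ is entire in $B$, write $G(B)=g_1(B-B_0)+g_2(B-B_0)^2+\cdots$. Then $G^2=g_1^2(B-B_0)^2+O((B-B_0)^3)$. Comparing with $P-C_\eta=\frac12P''(B_0)(B-B_0)^2+\cdots$ yields
\[
g_1^2=\tfrac12P''(B_0)<0 .
\]
So $g_1$ is purely imaginary and nonzero. Because $\mathcal{X}\mathcal{Y}$ is real on $\R$, we have $\frac{\dd}{\dd B}\mathrm{Im}\,G(B_0)=\mathrm{Im}\,g_1\ne0$, and dividing by $2$ gives \eqref{eq:mixed-transverse}.

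\textbf{Main obstacle.} The delicate point is Step 1, specifically the boundary exclusion: showing that $B_0$ lies in the open common stability set, so that the log-concavity of Lemma \ref{lem:Hill-concavity} applies. The unitary argument above should handle it. After that, the strict concavity inequality $P''(B_0)<0$ from Lemma \ref{lem:no-positive} does all the work, and the rest is the elementary order-two comparison in Step 2.
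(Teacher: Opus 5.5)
Your proposal is correct and follows the paper's proof essentially step for step. The structure is the same: exclude $\mathcal{X}(B_0),\mathcal{Y}(B_0)=\pm2$ via the commutator, squeeze $P(B_0)=C_\eta$ between $G(B_0)^2\ge0$ and Lemma~\ref{lem:no-positive}, get $G'(B_0)^2=\tfrac12P''(B_0)<0$ (your Taylor comparison is the paper's second differentiation of \eqref{eq:G-square}), and use that $\mathcal{X}\mathcal{Y}$ is real on $\R$.

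One small point: the boundary exclusion you flag as the main obstacle is already automatic from your own bound $P(B_0)\ge C_\eta>0$. Both factors $4-\mathcal{X}(B_0)^2$ and $4-\mathcal{Y}(B_0)^2$ lie in $[0,4]$, and a single trace equal to $\pm2$ would force $P(B_0)=0$. So neither the commutator argument nor your ``both equal $\pm2$'' alternative is actually needed.
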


\begin{proof} Recall \eqref{eq:traces0} that
\[
\mathcal{X}(B_0)\in [-2,2],\quad \mathcal{Y}(B_0)\in [-2,2],\quad
\mathcal{Z}(B_0)\in [-2,2].
\]
Note that for a unitary matrix, its trace is $\pm 2$ if and only if it is $\pm I_2$.
If \(\mathcal{X}(B_0)=\pm2\), then
\(M_h(B_0)=\pm I_2\), so the commutator $[M_h(B_0),M_v(B_0)]$ is \(I_2\),  a contradiction with
\[
 \tr[M_h(B_0),M_v(B_0)]=2\cos(2\pi\eta)\ne2
\]
because \(\eta\notin\mathbb Z\).  The same argument applies to
\(\mathcal{Y}(B_0)\).  Therefore,
\[
\mathcal{X}(B_0)\in (-2,2),\quad \mathcal{Y}(B_0)\in (-2,2),
\]
namely \(B_0\) belongs to a connected component \(I\) of
\eqref{eq:common-stability}.  Since
$
 \mathcal{X}(B_0), \mathcal{Y}(B_0)$, $\mathcal{Z}(B_0)
$
are real, we have \(G(B_0)\in\R\), so \(G(B_0)^2\ge0\).  On the other
hand, Lemma~\ref{lem:no-positive} gives
\(G(B_0)^2=P(B_0)-C_\eta\le0\).  This implies
\begin{equation}\label{eq:Gzero}
 G(B_0)=0,\qquad P(B_0)=C_\eta.
\end{equation}
Differentiating \eqref{eq:G-square} leads to
\begin{align}
 2(G')^2+2GG''=P''.\label{eq:G-square-second}
\end{align}
Using \eqref{eq:Gzero}, \eqref{eq:G-square-second} and \eqref{eq:strict-contact}, we obtain
\begin{equation}\label{eq:Gprime}
 2G'(B_0)^2=P''(B_0)<0.
\end{equation}
Thus
\[
 G'(B_0)=\ii a_1,\qquad\text{for some }\; a_1\in\mathbb{R}\setminus\{0\}.
\]

Recall \eqref{eq:xy-b} that
 \(\mathcal{X}(B), \mathcal{Y}(B)\in\R\) for $B\in I$, so \(\mathcal{X}'(B),\mathcal{Y}'(B)\in \mathbb R\) for $B\in I$. 
By \(G(B)=2\mathcal{Z}(B)-\mathcal{X}(B)\mathcal{Y}(B)\), we have
\[G'(B_0)-2\mathcal{Z}'(B_0)=-\mathcal{X}'(B_0)\mathcal{Y}(B_0)-\mathcal{X}(B_0)\mathcal{Y}'(B_0)\in\mathbb R,\]
which finally implies
\[
\frac{\dd}{\dd B}\mathrm{Im} \mathcal{Z}(B)\bigg|_{B=B_0}= \frac12\mathrm{Im} G'(B_0)=\frac12 a_1\neq 0.
\]
The proof is complete.
\end{proof}

\section{Infinitesimal monodromy of the exact Lam\'e path}
\label{sec:axisymmetric-nondegeneracy}

In this section, we prove the nondegeneracy result Theorem \ref{thm:zeroj}. It is equivalent to showing that every axisymmetric Jacobi field $\phi$ of \eqref{eq:mfe} is trivial. Set
\[
X_{\rm ax}:=\{f:f(z)=f(-z)=f(\bar z)\}.
\]
Let \(u\in X_{\rm ax}\) be a solution, and suppose that \(\phi\) is a Jacobi field satisfying
\begin{equation}\label{eq:Jacobi-3}
0\neq \phi\in X_{\rm ax},
\qquad
(\Delta+\e^u)\phi=0.
\end{equation}
Our goal is to obtain a contradiction with Theorem \ref{thm:transversality}.

Define the Jacobi differential by
\begin{equation}\label{eq:Qdef}
Q_\phi:=\phi_{zz}-u_z\phi_z .
\end{equation}
The first simple observation is that \(Q_\phi\) must be a constant.

\begin{proposition}\label{prop:Qconstant}
There exists $c\in\R$ such that 
\begin{equation}\label{eq:Qc}
 Q_\phi=c.
\end{equation}
\end{proposition}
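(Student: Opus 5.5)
The plan is to show that $Q_\phi$ is a holomorphic, doubly periodic function on $E_{\ii b}$, hence constant, and then to use the axisymmetry of $u$ and $\phi$ to conclude that this constant is real.

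\emph{Holomorphy away from $0$.} Using $u_{z\bar z}=-\e^u/4$ and $\phi_{z\bar z}=-\e^u\phi/4$ (from $(\Delta+\e^u)\phi=0$), compute
\[
\partial_{\bar z}Q_\phi=\phi_{zz\bar z}-u_{z\bar z}\phi_z-u_z\phi_{z\bar z}
=-\tfrac14(\e^u\phi)_z+\tfrac14\e^u\phi_z+\tfrac14 u_z\e^u\phi .
\]
Since $(\e^u\phi)_z=\e^u u_z\phi+\e^u\phi_z$, this equals zero. So $Q_\phi$ is holomorphic on $E_{\ii b}\setminus\{0\}$. It is also doubly periodic, because $u$ and $\phi$ are single-valued on the torus. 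Formally, $Q_\phi$ is the first variation of $T_u=u_{zz}-\frac12u_z^2$ in the direction $\phi$.

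\emph{Removability at $0$ (the main obstacle).} We need to control $\phi$ near $0$. Write $u=4\eta\log|z|+v$. Then $\e^u=|z|^{4\eta}\e^v$, which is locally integrable, indeed Hölder, so $\phi$ solves $\Delta\phi=-|z|^{4\eta}\e^v\phi$ with a bounded right-hand side. Elliptic regularity gives $\phi\in C^{1,\alpha}$ near $0$, and in fact $\phi=\phi(0)+\text{(harmonic part)}+O(|z|^{2+4\eta})$. Since $\phi$ is even, $\nabla\phi(0)=0$. Hence $\phi_z=O(|z|)$, and $\phi_{zz}$ is bounded up to a term $O(|z|^{4\eta})$. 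Together with $u_z=2\eta/z+O(1)$, this gives $Q_\phi=O(1)\cdot O(1)+O(1)$ near $0$; more carefully, $u_z\phi_z=(2\eta/z)\,O(|z|)=O(1)$. So $Q_\phi$ is bounded near $0$, and the singularity is removable. To justify these expansions I would first use $\phi_{z\bar z}=O(|z|^{4\eta})$ to write $\phi$ as the sum of a holomorphic plus antiholomorphic part and a Newtonian potential of a $C^{0,\alpha}$-type weighted density. The only delicate point is that the potential term has $\partial_z^2$ bounded, which holds because its density is Hölder continuous (when $4\eta<1$, use the weight's Hölder exponent). If this direct route is awkward, an alternative is to note that $Q_\phi$ has at most a pole at $0$ with zero residue, and evenness of $\phi$ and $u$ forces $Q_\phi$ to be even; then I would rule out a double pole by the growth estimate $Q_\phi=O(|z|^{-1})$.

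\emph{Conclusion.} $Q_\phi$ is now an entire elliptic function, hence a constant $c\in\C$. For reality, use $u(\bar z)=u(z)$ and $\phi(\bar z)=\phi(z)$, exactly as in \eqref{eq:reflected-T}. Then $Q_\phi(z)=\overline{Q_\phi(\bar z)}$, since conjugation exchanges $\partial_z$ and $\partial_{\bar z}$ appropriately for the reflected function. This gives $c=\bar c$, so $c\in\R$.
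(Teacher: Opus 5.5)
Your proposal is correct and follows essentially the same route as the paper. The common steps are: the $\bar\partial$-computation giving holomorphy off the origin; local $C^2$ (Schauder-type) regularity of $\phi$ from $\Delta\phi=-|z|^{4\eta}\e^v\phi$, with $\nabla\phi(0)=0$ from evenness, so that $Q_\phi$ is bounded at $0$; Liouville on the compact torus; and $Q_\phi(z)=\overline{Q_\phi(\bar z)}$ for reality. The paper additionally uses $\phi_{xy}(0)=0$ and $\Delta\phi(0)=0$ to write $\phi=a+d\operatorname{Re}(z^2)+o(|z|^2)$, but, as your argument shows, only $\nabla\phi(0)=0$ and boundedness of $\phi_{zz}$ are actually needed.
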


\begin{proof}
Away from $0$, the equations for $u$ and $\phi$ give
\begin{align*}
 (Q_\phi)_{\bar z}
 &=\phi_{zz\bar z}-u_{z\bar z}\phi_z-u_z\phi_{z\bar z}\\
 &=-\tfrac14\e^u(u_z\phi+\phi_z)
   +\tfrac14\e^u\phi_z+\tfrac14u_z\e^u\phi=0.
\end{align*}
Thus $Q_\phi$ is holomorphic on the punctured torus.

We justify the removability and the Taylor expansion at the source.  As mentioned in Section \ref{sec:introduction}, we set
\[
 v:=u-4\eta\log|z|.
\]
Then $v$ is bounded near $0$.
On a small punctured disk \(D\setminus\{0\}\), they satisfy
\begin{equation}\label{eq:local-regularized-system}
 \Delta v=-|z|^{4\eta}\e^v,\qquad
 \Delta\phi=-|z|^{4\eta}\e^v\phi.
\end{equation}
The right-hand side of \eqref{eq:local-regularized-system} belongs to $L^\infty$. Hence, by the standard elliptic reguarity theory \cite{BM,GilbargTrudinger}, we obtain
\[
 v,\phi\in C^{2,\alpha} \quad\text{near \(0\), \;for every \(0<\alpha<1\).}
\]
  In particular,
\eqref{eq:local-regularized-system} is valid pointwise at the origin.

The symmetries \(\phi(x,y)=\phi(-x,y)=\phi(x,-y)\) give
\[
 \phi_x(0)=\phi_y(0)=\phi_{xy}(0)=0.
\]
Moreover, \(\e^u\) extends continuously by zero at \(0\), so the Jacobi
equation \eqref{eq:Jacobi-3}, now valid pointwise there, gives
\(\phi_{xx}(0)+\phi_{yy}(0)=0\).  Taylor's theorem
therefore yields
\[
 \phi=a+d\operatorname{Re}(z^2)+o(|z|^2).
\]
Since \(u_z=2\eta/z+O(1)\), \(\phi_z=d\,z+o(|z|)\), and
\(\phi_{zz}=d+o(1)\), the function \(Q_\phi\) is bounded at \(0\).
Riemann's removable-singularity theorem extends it holomorphically
there.  A holomorphic function on the compact torus is a constant.  The
identity
$Q_\phi(\bar z)=\overline{Q_\phi(z)}$ makes this constant real.
\end{proof}

\begin{proposition}\label{prop:c-nonzero}
The constant $c$ in \eqref{eq:Qc} is nonzero.
\end{proposition}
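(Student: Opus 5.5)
The plan is to argue by contradiction: assume $c=0$, so that $Q_\phi\equiv0$, and show that then $\phi$ must be an ``infinitesimal rotation'' of the developing map. Single-valuedness of $\phi$ then forces the projective monodromy to fix a nonzero vector in $\R^3$, which is incompatible with $\tr M_{\gamma_0}(B_0)=2\cos(2\pi\eta)\neq2$.

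\emph{Step 1 (local solution space).} Work on a simply connected domain $U\subset E_{\ii b}\setminus\{0\}$ where the developing map $f=Y_1/Y_2$ of \eqref{502} is defined. Consider the overdetermined system
\[
\phi_{zz}=u_z\phi_z,\qquad \phi_{z\bar z}=-\tfrac14\e^u\phi ,\qquad \phi\ \text{real}.
\]
These two equations (together with their conjugates) express all second derivatives of $\phi$ linearly in terms of $(\phi,\phi_z,\phi_{\bar z})$. Hence, along any path, $(\phi,\phi_z)$ satisfies a linear first-order ODE. Consequently a solution on $U$ is determined by the value of $(\phi,\phi_z)$ at one point, and the real solution space $\mathcal K$ has dimension at most $3$.

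\emph{Step 2 (three explicit solutions).} Let $N:=\bigl(2f,\;|f|^2-1\bigr)/(1+|f|^2)\in S^2\subset\C\times\R\cong\R^3$ be $f$ followed by inverse stereographic projection. For $X\in\mathfrak{su}(2)$, the family $f_t=\e^{tX}\cdot f$ produces solutions $u_t$ via \eqref{502}, and $u_t=u$ because $\e^{tX}$ is an isometry of the round sphere. So I will instead use the noncompact directions: for $X\in\ii\,\mathfrak{su}(2)$, the maps $f_t=\e^{tX}\cdot f$ give genuinely new local solutions $u_t$ with the same Schwarzian $T$. The derivatives $\partial_t u_t|_{t=0}$ are exactly (up to constant factors) the three components $\langle v,N\rangle$, $v\in\R^3$. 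Each such component solves the Jacobi equation, since $u_t$ solves $\Delta u+\e^u=0$ on $U$, and has $Q=\partial_tT_{u_t}/2=0$ by \eqref{new22}. These three functions are linearly independent because $f'\neq0$, so $N$ is a local immersion. By Step 1 we conclude $\mathcal K=\{\langle v,N\circ f\rangle: v\in\R^3\}$. Thus $\phi=\langle v,N\circ f\rangle$ on $U$ for some $v\in\R^3$, and $v\neq0$ because $\phi\not\equiv0$.

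\emph{Step 3 (monodromy).} Continue analytically along $\gamma\in\pi_1(E_{\ii b}\setminus\{0\})$. Since $\gamma^*f=\varrho(\gamma)\circ f$ with $\varrho(\gamma)\in\mathrm{PSU}(2)$, we get $\gamma^*(N\circ f)=R_\gamma(N\circ f)$, where $R_\gamma\in SO(3)$ is the image of $M_\gamma(B_0)$ under $SU(2)\to SO(3)$. The function $\phi$ is single-valued, and the components of $N\circ f$ are linearly independent. Hence $R_\gamma^{\mathsf T}v=v$ for all $\gamma$, so every $R_\gamma$ is a rotation about the fixed axis $\R v$. The preimage of this rotation subgroup in $SU(2)$ is an abelian circle group (conjugate to the diagonal $U(1)$). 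Therefore $M_h(B_0)$ and $M_v(B_0)$ commute, and by \eqref{eqMhMnu} we get $M_{\gamma_0}(B_0)=I_2$. This contradicts $\tr M_{\gamma_0}(B_0)=2\cos(2\pi\eta)\neq2$, which holds since $\eta\notin\mathbb Z$. This argument also covers half-integer $\eta$, where the commutator is $-I_2$ and the projective monodromy is the Klein four group, which fixes no axis.

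The main obstacle is Step 2. The delicate points are verifying that the infinitesimal Möbius deformations generate exactly the kernel of the system $\{Q=0,\ (\Delta+\e^u)\phi=0\}$, with the correct identification of the three directions as the coordinates of $N\circ f$, and checking their linear independence. I will handle this by a direct computation: from \eqref{502} one has $\e^{u}=8|f'|^2/(1+|f|^2)^2$, and I will differentiate $\langle v,N\circ f\rangle$ explicitly, using $u_z=f''/f'-2\bar f f'/(1+|f|^2)$. Step 1 and Step 3 are then routine.
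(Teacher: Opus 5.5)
Your argument is correct, but it takes a different route from the paper's proof.

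The paper never identifies the kernel of the system $\{Q_\phi=0,\ (\Delta+\e^u)\phi=0\}$. Instead it passes to the holomorphic function $\psi=\e^{-u}\phi_{\bar z}$. Because $c=0$ and $\phi$ is real, $\psi$ is single-valued on $E_{\ii b}\setminus\{0\}$ and solves the symmetric-square equation $\psi'''+2T\psi'+T'\psi=0$, so $\psi=\mathbf Y^{\mathsf T}H\mathbf Y$ with $H$ symmetric and nonzero. Single-valuedness gives $M_\gamma^{\mathsf T}HM_\gamma=H$. Hence the traceless nonscalar matrix $A=J^{-1}H$ commutes with every $M_\gamma$. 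The centralizer of a nonscalar $2\times2$ matrix is the commutative algebra $\operatorname{span}_{\C}\{I_2,A\}$, so the monodromy is abelian. That argument is purely complex-algebraic: it needs only $M_\gamma\in\mathrm{SL}(2,\C)$ and $\tr[M_h,M_v]\neq2$, and does not use unitarity.

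Your route stays on the real side:
\begin{itemize}
\item the first-order system bounds the real kernel by $3$;
\item the coordinate functions of $N\circ f$ fill it (these are the pulled-back first spherical harmonics, i.e.\ the infinitesimal non-isometric M\"obius motions, which up to the factor $-2$ are exactly the functions $H_1,H_2,H_3$ of \eqref{eq:direct-Hj});
\item unitarity of the monodromy turns single-valuedness into an $\mathrm{SO}(3)$-fixed vector $v$, so the monodromy lies in a maximal torus.
\end{itemize}
This is more geometric and reuses ingredients the paper needs anyway in Step~1 of Theorem~\ref{Key thm}. Your $v\in\R^3\cong\ii\,\mathfrak{su}(2)$ is the real counterpart of the paper's $A$.

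Two small points to tidy up:
\begin{itemize}
\item $Q_{\dot u}=\partial_tT[u_t]|_{t=0}$, with no factor $\tfrac12$. This is harmless, since the quantity vanishes.
\item In Step~3, state explicitly that $\phi=\langle v,N\circ f\rangle$ holds on the whole universal cover of $E_{\ii b}\setminus\{0\}$, and that $\phi\not\equiv0$ on $U$. Both follow from the uniqueness in your Step~1, because $N\circ f$ is smooth there, even at poles of $f$.
\end{itemize}
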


\begin{proof}
Assume for contradiction that $c=0$, and set
$\psi:=e^{-u}\phi_{\bar z}$. Since $u$ and $\phi$ are real, the
complex conjugate of \eqref{eq:Qdef} gives
$$\phi_{\bar z\bar z}-u_{\bar z}\phi_{\bar z}=0,$$ and hence
$\psi_{\bar z}=0$. Thus $\psi$ is a single-valued holomorphic function
on $E_{\ii b}\setminus\{0\}$. Moreover, $\psi\not\equiv0$. Indeed, otherwise
$\phi_{\bar z}=0$, so the real-valued function $\phi$ is a constant, and
the Jacobi equation then gives $\phi=0$, contrary to \eqref{eq:Jacobi-3}.

Since $4\phi_{z\bar z}+e^u\phi=0$ and
$\phi_{\bar z}=e^u\psi$, we have
\[
\phi=-4e^{-u}\phi_{z\bar z}=-4(\psi_z+u_z\psi).
\]
Writing
\[
T=u_{zz}-\frac12u_z^2=-2[\eta(\eta+1)\wp(z)+B_0],
\]
a direct differentiation gives
\[
0=\phi_{zz}-u_z\phi_z
=-4\bigl(\psi_{zzz}+2T\psi_z+T_z\psi\bigr).
\]
Thus $\psi$ satisfies the third-order equation
\begin{equation}\label{eq:symmetric-square}
\psi'''+2T\psi'+T'\psi=0.
\end{equation}

Let $Y_1,Y_2$ be the (local) Wronskian-one fundamental system of the Lam\'e equation
\[
Y''=-\frac12TY=[\eta(\eta+1)\wp(z)+B_0]Y
\]
given by \eqref{fy}-\eqref{eq:wy1y2} in Section \ref{Sec 4}. A direct
calculation shows that
\[
Y_1^2,\qquad Y_1Y_2,\qquad Y_2^2
\]
form a (local) fundamental system of \eqref{eq:symmetric-square}. Therefore, by $\psi\not\equiv 0$,
\begin{equation}\label{eq:syms}
\psi=\mathbf Y^{\mathsf T}H\mathbf Y,\qquad
\mathbf Y:=
\begin{pmatrix}
Y_1\\
Y_2
\end{pmatrix},
\end{equation}
for a unique nonzero symmetric matrix
$H\in\operatorname{M}_{2\times2}(\mathbb C)$. 

With the monodromy convention fixed in Section \ref{Sec 4}, the analytic
		continuation along $\gamma$ gives
		$\gamma^*\mathbf Y=M_\gamma\mathbf Y$, where we write $M_\gamma=M_\gamma(B_0)$ for convenience. Since $\psi$ is single-valued, then $\psi\circ\gamma=\psi$,
		and the uniqueness of the preceding symmetric-square representation \eqref{eq:syms} gives
		\begin{equation}\label{eq:H-invariance}
			M_\gamma^{\mathsf T}HM_\gamma=H
			\qquad\text{for every }\gamma\in\pi_1(E_{\ii b}\setminus\{0\}).
		\end{equation}
		Let
		\[
		J:=
		\begin{pmatrix}
			0&1\\
			-1&0
		\end{pmatrix},
		\qquad
		A:=J^{-1}H.
		\]
		For every $M\in\mathrm{SL}(2,\mathbb C)$ one has
		$M^{\mathsf T}JM=J$. Combining this identity with
		\eqref{eq:H-invariance} gives
		\[
		M_\gamma^{-1}AM_\gamma=A
		\qquad\text{for every }\gamma\in\pi_1(E_{\ii b}\setminus\{0\}).
		\]
		Since $H$ is symmetric, $\operatorname{tr}A=0$. Moreover $A\ne0$
		because $H\ne0$, and hence $A$ is not a scalar matrix.
		
		Since $A$ is a nonscalar $2\times2$ matrix, it is nonderogatory.
		Hence its centralizer is
		\[
		C(A)=\{B: AB=BA\}
		=\operatorname{span}_{\mathbb C}\{I_2,A\},
		\]
		and is therefore commutative; see, for example,
		\cite[Corollary~4.4.18]{HornJohnsonTopics}. For the convenience of the reader, we provide the details.
		Indeed, choose \(P\in \mathrm{GL}(2,\mathbb C)\) such that \(\Lambda:=P^{-1}AP\) is in the Jordan normal form. For \(B\in C(A)\), set \(\widetilde B:=P^{-1}BP\). Then \(\Lambda\widetilde B=\widetilde B\Lambda\). There are two cases to consider:

 \textbf{Case 1:} If \(A\) has two distinct eigenvalues. Then \(\Lambda=\begin{pmatrix}\lambda_1&\\ &\lambda_2\end{pmatrix}\) with \(\lambda_1\neq\lambda_2\). Writing
\(\widetilde B=\begin{pmatrix}a&b\\ c&d\end{pmatrix}\), the relation
\(\Lambda\widetilde B=\widetilde B\Lambda\) gives
\((\lambda_1-\lambda_2)b=(\lambda_1-\lambda_2)c=0\), so \(b=c=0\).
Thus \(\widetilde B=\begin{pmatrix}a&\\ &d\end{pmatrix}=\alpha I_2+\beta \Lambda\) for suitable
\(\alpha,\beta\in\mathbb C\).

\textbf{Case 2:}  If \(A\) has only one eigenvalue \(\lambda\). Then, since \(A\) is non-scalar,
\[
\Lambda=\begin{pmatrix}\lambda&1\\0&\lambda\end{pmatrix}.
\]
Writing again \(\widetilde B=\begin{pmatrix}a&b\\ c&d\end{pmatrix}\), the
identity \(\Lambda\widetilde B=\widetilde B\Lambda\) yields \(c=0\) and \(d=a\). Hence
\(\widetilde B=\begin{pmatrix}a&b\\0&a\end{pmatrix}
=(a-\lambda b)I_2+b\Lambda\).

Therefore, in either case, \(\widetilde B=\alpha I_2+\beta \Lambda\), and conjugating
back gives
\[
B=\alpha I_2+\beta A.
\]
Hence \(C(A)=\operatorname{span}_{\mathbb C}\{I_2,A\}\), which is commutative.
Consequently, since every monodromy matrix \(M_\gamma\) commutes with \(A\),
all the \(M_\gamma\)'s commute with one another. In particular,
\[
[M_h,M_v]=I_2,
\qquad
\operatorname{tr}[M_h,M_v]=2,
\]
		again a contradiction with 
		\[
		\operatorname{tr}[M_h,M_v]=2\cos(2\pi\eta)\neq 2
		\]
		because $\eta\notin\mathbb Z$. This contradiction proves that $c\ne0$.
\end{proof}

\medskip
\noindent\textbf{The exact Lam\'e path.}
The constant $Q_\phi=c$ describes the first-order variation of the
projective connection generated by the formal perturbation $u+t\phi$.
Indeed,
\[
T[u+t\phi]
=T+tc-\frac{t^2}{2}\phi_z^2,
\qquad
\text{where }\; T[v]:=v_{zz}-\frac12v_z^2.
\]
However, $u+t\phi$ does not solve the Liouville equation for $t\ne0$.
We therefore introduce the exact Lam\'e path
\begin{equation}\label{eq:exact-q-path-preview}
q_t:=T+tc
=-2\bigl[\eta(\eta+1)\wp+B(t)\bigr],
\quad\text{where }\;
B(t):=B_0-\frac c2t,
\end{equation}
so that
\[
\dot B(0)=-\frac c2\ne0.
\]
Thus $q_t$ agrees with the projective connection of $u+t\phi$ to first
order at $t=0$, while it remains an exact Lam\'e projective connection
for every $t$. The next theorem compares the infinitesimal deformation
generated by $\phi$ with the monodromy of this exact path.

\medskip
\noindent\textbf{An infinitesimal character argument.}
We now realize the Jacobi deformation above by an infinitesimal
deformation of the developing map. Denote by $M:=E_{\ii b}\setminus\{0\}$ for convenience, and let $\widetilde M$ be the universal cover of $M$. As usual, we use the same notation
for the lifts of $u$ and $\phi$ to $\widetilde M$.

Let \(\widetilde p_0\in\widetilde M\) be a fixed lift of the base point \(p_0\in M\) used in Section \ref{Sec 3}. For simplicity, we still denote it by \(p_0\). Choose the two fixed lifted period paths from $p_0$ to $\gamma_hp_0$
and $\gamma_v p_0$ so that their union $\Gamma$ is a finite tree.
Let $$F_0: \widetilde M \to \mathbb S^2$$ is a local orientation-preserving isometry with respect to the
round metric on \(\mathbb S^2\).
Since
$F_0(\Gamma)$ is a finite union of smooth arcs, we may choose
$a_*\in\mathbb S^2\setminus F_0(\Gamma)$. Let
$$\sigma_*:\mathbb S^2\setminus\{a_*\}\to\mathbb C$$ be an
orientation-preserving stereographic coordinate. After taking a
sufficiently thin simply connected neighborhood $\Omega$ of $\Gamma$,
we may assume that $\overline\Omega\Subset\widetilde M$ and
$F_0(\overline\Omega)\cap\{a_*\}=\varnothing$. Set
$f:=\sigma_*\circ F_0$ on $\Omega$. Then $f$ is holomorphic and locally
univalent, and
\begin{equation}\label{eq:direct-developing-formula}
e^u=\frac{8|f_z|^2}{(1+|f|^2)^2},
\qquad
\mathcal S(f)=T.
\end{equation}

We now identify the infinitesimal metric variation generated by a
holomorphic deformation of $f$. For a holomorphic function $v$ on
$\Omega$, define
\begin{equation}\label{eq:direct-Vf}
\mathcal V_f[v]
:=
2\operatorname{Re}\left(
\frac{v_z}{f_z}
-\frac{2\overline f\,v}{1+|f|^2}
\right).
\end{equation}
Indeed, for real $s$, put $f_s=f+sv$ and
\[
u_s:=\log\frac{8|(f_s)_z|^2}{(1+|f_s|^2)^2}.
\]
Since $f_z\ne0$, the map $f_s$ remains locally univalent on compact
subsets of $\Omega$ for all sufficiently small $|s|$. Differentiating
at $s=0$ gives
\[
\left.\frac{\partial u_s}{\partial s}\right|_{s=0}
=
2\operatorname{Re}\left(
\frac{v_z}{f_z}
-\frac{2\overline f\,v}{1+|f|^2}
\right)
=\mathcal V_f[v].
\]
Thus $\mathcal V_f[v]$ is precisely the first variation of the
Liouville conformal factor induced by the deformation $f_s=f+sv$.

For every such $s$, the metric defined by $f_s$ has curvature one, so
$\Delta u_s+e^{u_s}=0$. Moreover, the Liouville--Schwarzian identity
gives $T[u_s]=\mathcal S(f_s)$, where
$T[w]:=w_{zz}-\frac12w_z^2$. Differentiating these two identities at
$s=0$, we obtain
\begin{equation}\label{eq:direct-linearization-identities}
(\Delta+e^u)\mathcal V_f[v]=0,
\qquad
Q_{\mathcal V_f[v]}=D\mathcal S_f[v].
\end{equation}
Here
\begin{equation}\label{eq:direct-linearized-Schwarzian}
D\mathcal S_f[v]
=
\frac{v_{zzz}}{f_z}
-3\frac{f_{zz}}{f_z^2}v_{zz}
+
\left(
3\frac{f_{zz}^2}{f_z^3}
-\frac{f_{zzz}}{f_z^2}
\right)v_z.
\end{equation}
Since \(f_z\neq 0\), this is a genuine third-order holomorphic linear ordinary differential operator. Then, we  obtain
\begin{align}\label{equ: ker DS}
\ker D\mathcal S_f
=
\operatorname{span}_{\mathbb C}\{1,f,f^2\}.
\end{align}
Indeed, this follows from the M\"obius invariance of \(\mathcal S\). More precisely,
\[
\mathcal S(f+t)=\mathcal S(e^t f)=\mathcal S\left(\frac{f}{1-tf}\right)=\mathcal S(f).
\]
Taking the derivative at \(t=0\), we get
$
\operatorname{span}_{\mathbb C}\{1,f,f^2\}\subset \ker D\mathcal S_f .
$
Moreover, the Wronskian of \(1,f,f^2\) is \(2f_z^3\neq0\). Since \(D\mathcal S_f\) is a third-order linear ordinary differential operator, we obtain \eqref{equ: ker DS}.

Our strategy is to realize the Jacobi deformation by an infinitesimal
deformation of the developing map. In view of
\eqref{eq:direct-linearization-identities}, we first seek a holomorphic
function $v$ on $\Omega$ such that
\[
\mathcal V_f[v]=\phi,
\qquad
D\mathcal S_f[v]=c.
\]
We then use the two-jet of $v$ at $p_0$ to normalize the exact
Schwarzian family
\[
\mathcal S(f_t)=q_t,
\]
so that $\dot f:=\frac{\dd}{\dd t}f_t\big|_{t=0}=v$. The single-valuedness of $\phi$ and the invariance
of the round metric will imply that, for each period loop $\gamma$,
the corresponding projective monodromy satisfies
\[
\dot\varrho_0(\gamma)
\in T_{\varrho_0(\gamma)}\mathrm{PSU}(2).
\]
Thus the infinitesimal deformation determined by the exact Lam\'e path has the same first-order spherical character as the Jacobi deformation. Therefore, we can construct a special infinitesimal deformation satisfying \eqref{eq:specialized-character-tangency}.

\begin{theorem}\label{Key thm}
\label{thm:infinitesimal-unitary-character}
Let \(u\in X_{\rm ax}\) be a solution of \eqref{eq:mfe}, and suppose that a nonzero function \(\phi\in X_{\rm ax}\) satisfies
\[
(\Delta+\e^u)\phi=0.
\]
Let \(q_t\) be the holomorphic path defined in \eqref{eq:exact-q-path-preview}. 
Given that the base point \(p_0\), let \(M_h(B(t))\) and \(M_v(B(t))\) be the corresponding monodromy matrices, as defined in Section \ref{Sec 3}. Recall $\mathcal{Z}(B)=\operatorname{tr}
\bigl(M_h(B)M_v(B)\bigr) $, then
\begin{equation}\label{eq:specialized-character-tangency}
 \frac{\dd}{\dd t}\operatorname{Im}\mathcal{Z}(B(t))\bigg|_{t=0}=0.
\end{equation}

\end{theorem}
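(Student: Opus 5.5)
The plan is to realize $\phi$ locally as the metric variation $\mathcal V_f[v]$ of a holomorphic deformation $v$ of the developing map with $D\mathcal S_f[v]=c$. We then identify $v$ with the derivative of the exact Schwarzian family $f_t$ solving $\mathcal S(f_t)=q_t$. Finally, single-valuedness of $\phi$ forces the infinitesimal monodromy to be tangent to $\mathrm{PSU}(2)$, and hence $\frac{\dd}{\dd t}\mathcal Z(B(t))|_{t=0}\in\R$.

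\emph{Step 1: construct $v$ on $\Omega$.} Since $\Omega$ is simply connected and $D\mathcal S_f$ is a third-order holomorphic linear ODE operator with nonvanishing leading coefficient, there is a holomorphic $w$ on $\Omega$ with $D\mathcal S_f[w]=c$. By \eqref{eq:direct-linearization-identities}, $\phi_1:=\phi-\mathcal V_f[w]$ is a real Jacobi field on $\Omega$ with $Q_{\phi_1}=0$. We must show $\phi_1=\mathcal V_f[k]$ for some $k\in\operatorname{span}_{\C}\{1,f,f^2\}$, and then take $v:=w+k$.

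For this we count dimensions. As in the proof of Proposition \ref{prop:c-nonzero}, every real Jacobi field with $Q=0$ is $\phi_1=-4(\psi_z+u_z\psi)$ with $\psi=\mathbf Y^{\mathsf T}H\mathbf Y$. The reality of $\phi_1$ cuts the three complex parameters of $H$ to a real $3$-dimensional space. On the other side, $k\mapsto\mathcal V_f[k]$ is real-linear on the $6$-real-dimensional space $\operatorname{span}_{\C}\{1,f,f^2\}$, and its kernel is exactly the Killing fields of the round metric written in the chart, i.e. $\mathfrak{su}(2)$, which is $3$-dimensional. So the image is $3$-dimensional and lies in the space above, and equality follows. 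This step is the main obstacle. If the real-dimension count of Jacobi fields with $Q=0$ proves awkward, I would instead compute directly: writing $k=\alpha+\beta f+\gamma f^2$, I would show the $\mathcal V_f[k]$ are exactly the restrictions of the linear functions $\frac{1-|F|^2}{1+|F|^2},\ \frac{2\operatorname{Re}F}{1+|F|^2},\ \frac{2\operatorname{Im}F}{1+|F|^2}$. These are the classical Jacobi fields pulled back from $\mathbb S^2$, and they span everything by the $\psi$-description.

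\emph{Step 2: normalize the exact family.} For $t$ near $0$, choose the Wronskian-one basis $\mathbf Y(t)$ of the Lam\'e equation with $B=B(t)$ whose $2$-jet at $p_0$ is prescribed so that $f_t:=Y_1(t)/Y_2(t)$ satisfies $f_0=f$ and $\partial_t f_t|_{t=0}$ has the same $2$-jet at $p_0$ as $v$. This is possible because $\mathcal S(f_t)=q_t$ and the initial $2$-jet of a solution of $\mathcal S(g)=q$ can be chosen freely. Differentiating $\mathcal S(f_t)=q_t$ gives $D\mathcal S_f[\dot f]=c=D\mathcal S_f[v]$, so $\dot f-v\in\ker D\mathcal S_f=\operatorname{span}\{1,f,f^2\}$ by \eqref{equ: ker DS}. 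Matching $2$-jets, using the Wronskian $2f_z^3\ne0$, then forces $\dot f=v$.

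\emph{Step 3: monodromy tangency.} We have $f_t\circ\gamma=\varrho_t(\gamma)\circ f_t$, with $\varrho_0(\gamma)\in\mathrm{PSU}(2)$. Differentiating at $t=0$ gives
\[
v\circ\gamma=D\varrho_0(\gamma)[v]+X_\gamma(f\circ\gamma),
\]
where $X_\gamma$ is the quadratic-polynomial holomorphic vector field of $\dot\varrho_0(\gamma)\varrho_0(\gamma)^{-1}\in\mathfrak{sl}(2,\C)$. Two facts now combine. First, $\phi\circ\gamma=\phi$. Second, $\mathcal V$ is equivariant under isometries: $\mathcal V_{g\circ f}[Dg\,v]=\mathcal V_f[v]$ for $g\in\mathrm{PSU}(2)$. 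Together these give $\mathcal V_{f\circ\gamma}[X_\gamma(f\circ\gamma)]=0$. By the kernel description in Step 1, $X_\gamma\in\mathfrak{su}(2)$. Lifting to $\mathrm{SL}(2,\C)$ by continuity in $t$ (the lift is traceless), we get $\dot M_\gamma=X_\gamma M_\gamma$ with $X_\gamma\in\mathfrak{su}(2)$ and $M_\gamma\in\mathrm{SU}(2)$. This holds in the basis for which $M_\gamma(B_0)$ is unitary; traces are basis independent.

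Hence $t\mapsto M_h(B(t))M_v(B(t))$ is tangent to $\mathrm{SU}(2)$ at $t=0$. Since the trace is real on $\mathrm{SU}(2)$, the derivative
\[
\operatorname{tr}\bigl(X_hM_hM_v+M_hX_vM_v\bigr)
\]
is real. This is exactly \eqref{eq:specialized-character-tangency}.
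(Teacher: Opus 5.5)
Your proposal is correct, and Steps 2--4 are essentially the paper's argument. The two-jet normalization gives $\dot f=v$. The identity $\phi\circ\gamma=\phi$ together with the $\mathrm{PSU}(2)$-equivariance of $\mathcal V$ gives $\mathcal V_{f\circ\gamma}[X_\gamma(f\circ\gamma)]=0$. The kernel of $\mathcal V_h$ on quadratic polynomials in $h$ then forces $X_\gamma\in\mathfrak{su}(2)$, and the lifted derivative lies in the tangent space of $\mathrm{SU}(2)$, where the trace is real.

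The genuine difference is Step 1. The paper matches the value and first derivatives of $\phi$ at $p_0$ using the pulled-back coordinate functions $H_j$ and the determinant \eqref{eq:direct-Hj-determinant}. It then kills the remainder by uniqueness for the first-order system satisfied by $(\psi,\psi_z,\psi_{\bar z})$. You instead identify the whole real space $V$ of Jacobi fields on $\Omega$ with $Q=0$ by a dimension count. This reuses the symmetric-square description from Proposition \ref{prop:c-nonzero}, and the kernel computation for $k\mapsto\mathcal V_f[k]$ that the paper only carries out in Step 3.

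Two assertions in your Step 1 need to be made explicit.
\begin{itemize}
\item[(1)] The bound $\dim_{\R}V\le 3$ does not follow from ``reality cuts the parameters in half'' as written. It does follow from a short argument: $V$ lies in the image $W$ of the complex-linear map $H\mapsto -4(\psi_z+u_z\psi)$ with $\psi=\mathbf Y^{\mathsf T}H\mathbf Y$, so $\dim_{\C}W\le 3$. Since $V\cap \ii V=\{0\}$ (a function that is both real and purely imaginary vanishes), one gets $2\dim_{\R}V\le 6$. Alternatively, $-4(\psi_z+u_z\psi)$ is a constant multiple of $\mathbf Y^{*}JH\mathbf Y/\mathbf Y^{*}\mathbf Y$, and reality forces the traceless matrix $JH$ to be Hermitian.
\item[(2)] The claim that the kernel of $k\mapsto\mathcal V_f[k]$ consists exactly of the $\mathfrak{su}(2)$ fields needs the explicit formula for $(1+|h|^2)\mathcal V_h[A+Bh+Ch^2]$, as in the paper's Step 3. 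The word ``Killing'' alone does not establish it.
\end{itemize}

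With these supplied, your route yields a cleaner structural fact: real Jacobi fields with $Q=0$ on a simply connected piece are exactly pullbacks of linear functions on $\mathbb S^2$. It also uses a single kernel computation for both Step 1 and Step 3. The paper's route is purely local and needs no description of that solution space, at the cost of the $3\times3$ determinant and the ODE uniqueness argument.
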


\begin{proof}

\textbf{Step 1: Construction of $v$.}
Firstly, we construct a holomorphic function $v$ on $\Omega$ such that
\begin{equation}\label{eq:direct-representation}
\mathcal V_f[v]=\phi,
\qquad
D\mathcal S_f[v]=c.
\end{equation}
Here, we use the same notation
for the lifts of  $\phi$ to $\widetilde M$. Since $\Omega$ is simply connected and the coefficients in
\eqref{eq:direct-linearized-Schwarzian} are holomorphic, the equation
$D\mathcal S_f[v_0]=c$ has a holomorphic solution $v_0$ on $\Omega$.
Since $
\ker D\mathcal S_f
=
\operatorname{span}_{\mathbb C}\{1,f,f^2\}$, 
consider the following three elements of this kernel:
\[
k_1=f,\qquad
k_2=\frac{1-f^2}{2},\qquad
k_3=\frac{i(1+f^2)}{2},
\]
and put $H_j:=\mathcal V_f[k_j]$. A direct calculation from
\eqref{eq:direct-Vf} gives
\begin{equation}\label{eq:direct-Hj}
H_1=2\frac{1-|f|^2}{1+|f|^2},\qquad
H_2=-2\frac{f+\overline f}{1+|f|^2},\qquad
H_3=2i\frac{f-\overline f}{1+|f|^2}.
\end{equation}
Writing $z=x+iy$, another direct calculation gives, at every point of
$\Omega$,
\begin{equation}\label{eq:direct-Hj-determinant}
\det
\begin{pmatrix}
H_1&H_2&H_3\\
(H_1)_x&(H_2)_x&(H_3)_x\\
(H_1)_y&(H_2)_y&(H_3)_y
\end{pmatrix}
=
\frac{32|f_z|^2}{(1+|f|^2)^2}\ne0.
\end{equation}
Consequently, there exist unique real numbers $a_1,a_2,a_3$ such that,
for
\[
v:=v_0+a_1k_1+a_2k_2+a_3k_3,
\]
the function $\mathcal V_f[v]$ has the same value and first derivatives
as $\phi$ at $p_0$:
\begin{align}\label{equ V_f}
\mathcal V_f[v](p_0)=\phi(p_0),\quad
\partial_x\mathcal V_f[v](p_0)=\phi_x(p_0),\quad
\partial_y\mathcal V_f[v](p_0)=\phi_y(p_0).
\end{align}
Since $k_j\in\ker D\mathcal S_f$, we still have
$D\mathcal S_f[v]=c$.

Set $\psi:=\phi-\mathcal V_f[v]$. By
\eqref{eq:direct-linearization-identities} and the hypothesis
$Q_\phi=c$, we have
\[
(\Delta+e^u)\psi=0,\qquad Q_\psi=0,
\]
while the choice of $a_1,a_2,a_3$, namely,  \eqref{equ V_f} gives
$\psi(p_0)=\psi_z(p_0)=\psi_{\bar z}(p_0)=0$. The preceding two
equations and their complex conjugates can be written as
\[
\psi_{zz}=u_z\psi_z,\qquad
\psi_{z\bar z}=-\frac14e^u\psi,\qquad
\psi_{\bar z\bar z}=u_{\bar z}\psi_{\bar z}.
\]
Let $z=z(s)$ be any $C^1$ path in $\Omega$ starting at $p_0$. Along
this path, the vector
$\Psi=(\psi,\psi_z,\psi_{\bar z})^{\mathsf T}$ satisfies
\[
\frac{\dd \Psi}{\dd s}
=
\begin{pmatrix}
0&z'&\overline{z'}\\
-\frac14e^u\overline{z'}&u_zz'&0\\
-\frac14e^uz'&0&u_{\bar z}\overline{z'}
\end{pmatrix}
\Psi.
\]
Since $\Psi(0)=0$, uniqueness for this homogeneous first-order linear
system gives $\Psi\equiv0$ along the path. As $\Omega$ is connected,
$\psi\equiv0$ on $\Omega$. This proves
\eqref{eq:direct-representation}.

\textbf{Step 2: Solve the $f_t$ with $\dot f:=\frac{\dd}{\dd t}\big|_{t=0}f_t=v$.}
We now pass to the exact Schwarzian family. Let
$f_t:\widetilde M\to\mathbb P^1$ be the meromorphic solution of
\begin{equation}\label{eq:direct-exact-Schwarzian}
\mathcal S(f_t)=q_t=T+tc
\end{equation}
whose two-jet at $p_0$ is prescribed by
\begin{equation}\label{eq:direct-exact-jet}
J(f_t)(p_0)=J(f+tv)(p_0),
\qquad
J(h):=(h,h_z,h_{zz}).
\end{equation}
The existence of $f_t$ follows either from the Schwarzian initial-value
problem or by taking the quotient of two independent solutions of
$Y''=-q_tY/2$. Since $f_z(p_0)\ne0$, the prescribed jet remains
nondegenerate for all sufficiently small $|t|$. Standard parameter
dependence gives a $C^1$ family on compact continuation domains.

At $t=0$, the maps $f_0$ and $f$ have the same Schwarzian derivative
and the same nondegenerate two-jet at $p_0$, so $f_0=f$. Differentiating
\eqref{eq:direct-exact-Schwarzian} and
\eqref{eq:direct-exact-jet} at $t=0$ gives
\[
D\mathcal S_f[\dot f]=c,\qquad
J(\dot f)(p_0)=J(v)(p_0).
\]
Since $D\mathcal S_f[v]=c$, the difference $\dot f-v$ satisfies the
homogeneous third-order equation
$D\mathcal S_f[\dot f-v]=0$ and has zero two-jet at $p_0$. The uniqueness
for this equation therefore gives
\begin{equation}\label{eq:direct-fdot-v}
\dot f=v
\end{equation}
near $p_0$ and, by analytic continuation, along the two fixed lifted
period paths and in neighborhoods of their endpoints.

\textbf{Step 3: The projective monodromy $\dot\varrho_0(\gamma)
\in T_{\varrho_0(\gamma)}\mathrm{PSU}(2)$.}
Let
$
\varrho_t:\pi_1(M,p_0)\to\mathrm{PSL}(2,\mathbb C)
$
be the projective monodromy representation of the Schwarzian equation
$\mathcal{S}(f_t)=q_t$, characterized by
\begin{align}\label{eq:direct-projective-monodromy} 
   f_t\circ\gamma=\varrho_t(\gamma)\circ f_t. 
\end{align}
Here, $\varrho_t(\gamma)\circ f$ should be understood as follows:
\begin{align*}
    \varrho_t(\gamma)=\begin{pmatrix}a_t&b_t\\ c_t&d_t\end{pmatrix},\quad \varrho_t(\gamma)\circ f=\varrho_t(\gamma)(f) =\frac{a_tf+b_t}{c_tf+d_t}.
\end{align*}
Since \(q_t\) depends smoothly on \(t\), so does \(\varrho_t(\gamma)\). Choose a
neighborhood \(V\) of \(p_0\) such that
\(\overline V,\gamma\overline V\subset\Omega\). After shrinking the
parameter interval, all maps in \eqref{eq:direct-projective-monodromy}
are finite in the chosen affine coordinate on \(V\). Differentiating at
\(t=0\) and using \eqref{eq:direct-fdot-v}, we obtain
\begin{equation}\label{eq:direct-monodromy-linearization}
v\circ\gamma
=
\dot\varrho_0(\gamma)[f]
+
\varrho'_{0}(\gamma)(f)\,v .
\end{equation}
Here \(\dot\varrho_0(\gamma)\in T_{\varrho_0(\gamma)}\mathrm{PSL}(2,\mathbb C)\),
and \(\dot\varrho_0(\gamma)[\zeta]\) denotes the derivative
\[
\dot\varrho_0(\gamma)[\zeta]
:=
\left.\frac{\dd}{\dd t}\right|_{t=0}\varrho_t(\gamma)(\zeta)
\]
in the chosen affine coordinate. And, $\varrho'_0(\gamma)(\zeta):=\frac{\partial}{\partial \zeta}\varrho_0(\gamma)(\zeta) $.

Put  \(h:=f\circ\gamma=\varrho_0(\gamma)\circ f\). Define
\[
d_\gamma
:=
v\circ\gamma-\varrho'_{0}(\gamma)(f)\,v .
\]
We claim that
\begin{equation}\label{eq:direct-zero-metric-variation}
\mathcal V_h[d_\gamma]=0 .
\end{equation}
First, direct substitution into \eqref{eq:direct-Vf} and using \eqref{eq:direct-representation} gives the
precomposition identity
\[
\mathcal V_{f\circ\gamma}[v\circ\gamma]
=
(\mathcal V_f[v])\circ\gamma=\phi\circ\gamma=\phi .
\]
The last equality follows from the fact that \(\phi\) is single-valued on \(M\), and hence its lift is invariant under deck transformations. 
On the other hand, from \eqref{eq:y1y2}, we have \(\varrho_0(\gamma)\in\mathrm{PSU}(2)\). In the chosen stereographic coordinate, the invariance
of the round metric under \(\varrho_0\) is the identity
\[
\frac{|\varrho'_{0}(\gamma)(\zeta)|^2}{(1+|\varrho_0(\gamma)(\zeta)|^2)^2}
=
\frac{1}{(1+|\zeta|^2)^2}.
\]
Applying this identity to \(f+sv\), we have
\[
\frac{|\varrho'_{0}(\gamma)(f+sv)|^2|(f+sv)_z|^2}{(1+|\varrho_0(\gamma)(f+sv)|^2)^2}
=
\frac{|(f+sv)_z|^2}{(1+|f+sv|^2)^2}.
\]
Differentiating at \(s=0\) gives
\[
\mathcal V_{\varrho_0(\gamma)\circ f}
\left[\varrho'_{0}(\gamma)(f)\,v\right]
=
\mathcal V_f[v]
=\phi .
\]
Since \(\varrho_0(\gamma)\circ f=h\), subtracting the last two identities proves
\eqref{eq:direct-zero-metric-variation}.

By \eqref{eq:direct-linearization-identities}, we have
\(D\mathcal S_h[d_\gamma]=0\). Since \(h_z\ne0\), the kernel of
\(D\mathcal S_h\) is
\[
\operatorname{span}_{\mathbb C}\{1,h,h^2\}.
\]
Hence
\[
d_\gamma=A_\gamma+B_\gamma h+C_\gamma h^2
\]
for some constants \(A_\gamma,B_\gamma,C_\gamma\in\mathbb C\). A direct
calculation from \eqref{eq:direct-Vf} gives
\[
(1+|h|^2)\mathcal V_h[d_\gamma]
=
(B_{\gamma}+\overline B_{\gamma})(1-|h|^2)
+2(C_{\gamma}-\overline A_{\gamma})h
+2(\overline C_{\gamma}-A_{\gamma})\overline h .
\]
Since \(h\) is locally univalent, \(h(V)\) contains an open subset of
\(\mathbb C\). Therefore \eqref{eq:direct-zero-metric-variation} implies
\[
B_\gamma+\overline{B_\gamma}=0,
\qquad
C_\gamma=\overline{A_\gamma}.
\]
Writing \(B_\gamma=2i\alpha_\gamma\) with \(\alpha_\gamma\in\mathbb R\),
we obtain
\[
d_\gamma
=
A_\gamma+2i\alpha_\gamma h
+\overline{A_\gamma}h^2 .
\]
Set
\[
L_\gamma:=
\begin{pmatrix}
i\alpha_\gamma&A_\gamma\\
-\overline{A_\gamma}&-i\alpha_\gamma
\end{pmatrix}
\in\mathfrak{su}(2).
\]
With the convention
\[
\begin{pmatrix}a&b\\c&d\end{pmatrix}\cdot\zeta
=
\frac{a\zeta+b}{c\zeta+d},
\]
the infinitesimal M\"obius vector field induced by \(L_\gamma\) is
\[
X_\gamma(\zeta)
=
A_\gamma+2i\alpha_\gamma\zeta
+\overline{A_\gamma}\zeta^2 .
\]
which is equivalent to
\begin{align*}
    X_{\gamma}(\zeta)=\frac{\dd}{\dd t}\Big|_{t=0}(\exp(tL_{\gamma})\cdot\zeta).
\end{align*}
Consequently, \[ d_\gamma=X_\gamma\circ h =X_\gamma\circ\varrho_0(\gamma)\circ f. \] On the other hand, \eqref{eq:direct-monodromy-linearization} gives $d_\gamma=\dot\varrho_0(\gamma)\circ f$. Since $f(V)$ is open, it follows that \[ \dot\varrho_0(\gamma) = X_\gamma\circ\varrho_0(\gamma). \]
 Therefore, as tangent vectors
to \(\mathrm{PSL}(2,\mathbb C)\) at \(\varrho_0(\gamma)\),
\[
\dot\varrho_0(\gamma)
=
\left.\frac{\dd}{\dd t}\right|_{t=0}
\big(\exp(tL_\gamma)\varrho_0(\gamma)\big).
\]
Here we have used the group structure of M\"obius transformations.   Indeed, the right-hand side acts on \(\zeta\) as
\[
\left.\frac{\dd}{\dd t}\right|_{t=0}
\big(\exp(tL_\gamma)\varrho_0(\gamma)\big)\cdot\zeta
=
X_\gamma(\varrho_0(\gamma)\cdot\zeta).
\]
Since \(L_\gamma\in\mathfrak{su}(2)\) and
\(\varrho_0(\gamma)\in\mathrm{PSU}(2)\), the curve
\[
t\mapsto \exp(tL_\gamma)\varrho_0(\gamma)
\]
lies in \(\mathrm{PSU}(2)\). Hence
\begin{equation}\label{eq:direct-projective-tangency}
\dot\varrho_0(\gamma)
\in T_{\varrho_0(\gamma)}\mathrm{PSU}(2),
\qquad
\gamma=\gamma_h,\gamma_v.
\end{equation}

\textbf{Step 4: The final conclusion.}
 Near $p_0$,
choose a $C^1$ family of square roots
$s_t=((f_t)_z)^{-1/2}$ and define
\[
\widehat Y_{1,t}:=if_ts_t,\qquad
\widehat Y_{2,t}:=is_t,\qquad
\widehat\Psi_t:=
\begin{pmatrix}
\widehat Y_{1,t}&(\widehat Y_{1,t})_z\\
\widehat Y_{2,t}&(\widehat Y_{2,t})_z
\end{pmatrix}.
\]
Then
\[
\det\widehat\Psi_t
=
(f_t)_zs_t^2=1.
\]
Recalling $\mathcal{S}(f_t)=q_t$, 
the standard Schwarzian quotient calculation shows that both $\widehat Y_{1,t}$ and $\widehat Y_{2,t}$ solve the Lam\'{e} equation \[
Y''=-\frac12q_tY
=
\bigl[\eta(\eta+1)\wp(z)+B(t)\bigr]Y.
\] Continuing $\widehat\Psi_t$ from $p_0$
defines a determinant-one fundamental matrix on $\widetilde M$.
Let $\widehat M_\gamma(t)$ denote its monodromy along $\gamma$. Its
projection to $\mathrm{PSL}(2,\mathbb C)$ is precisely
$\varrho_t(\gamma)$ because $f_t=\widehat Y_{1,t}/\widehat Y_{2,t}$.

At $t=0$, $\varrho_0(\gamma)\in\mathrm{PSU}(2)$, and the inverse image of
$\mathrm{PSU}(2)$ under
$\mathrm{SL}(2,\mathbb C)\to\mathrm{PSL}(2,\mathbb C)$ is
$\mathrm{SU}(2)$. Hence
\begin{align}\label{equ: SU2}
    \widehat M_\gamma(0)\in\mathrm{SU}(2).
\end{align}
 Since the covering map is a
local diffeomorphism, \eqref{eq:direct-projective-tangency} also gives
\begin{align}\label{equ hat M}
\dot{\widehat M}_\gamma(0)
\in
T_{\widehat M_\gamma(0)}\mathrm{SU}(2),
\qquad
\gamma=\gamma_h,\gamma_v. 
\end{align}
Although \(\widehat M_\gamma(t)\) does not belong to \(\mathrm{SU}(2)\) for $t\neq 0$, \eqref{equ: SU2} and \eqref{equ hat M} imply that, the value and velocity of \(\widehat M_\gamma(t)\) at $t=0$ agree with those of an
$\mathrm{SU}(2)$-valued path, and we may replace
$\widehat M_\gamma(t)$ by such a path when computing the first derivative at
$t=0$. The differential of the multiplication map
$\mathrm{SU}(2)\times\mathrm{SU}(2)\to\mathrm{SU}(2)$ sends tangent
vectors to tangent vectors. Therefore
\[
\left.
\frac{\dd}{\dd t}
\bigl(\widehat M_{\gamma_h}(t)
\widehat M_{\gamma_v}(t)\bigr)
\right|_{t=0}
\in
T_{\widehat M_{\gamma_h}(0)
\widehat M_{\gamma_v}(0)}\mathrm{SU}(2).
\]
Since the trace of every matrix in $\mathrm{SU}(2)$ is real, the
differential of the imaginary part of the trace vanishes on every
tangent space of $\mathrm{SU}(2)$. It follows that
\begin{equation}\label{eq:direct-trace-tangency}
\left.
\frac{\dd}{\dd t}
\operatorname{Im}\operatorname{tr}
\bigl(\widehat M_{\gamma_h}(t)
\widehat M_{\gamma_v}(t)\bigr)
\right|_{t=0}
=0.
\end{equation}

On the other hand, recalling \eqref{eq:monodromyL}, we have that $\widehat M_{\gamma_h}(t)
\widehat M_{\gamma_v}(t)=\widehat M_{\gamma_h\gamma_v}(t)$ is conjugate to $M_{\gamma_h\gamma_v}(B(t))$, and it follows from \eqref{eq:traces} that
$$\mathcal{Z}(B(t))=\tr(M_{\gamma_h\gamma_v}(B(t)))=\tr\widehat M_{\gamma_h\gamma_v}(t)=\tr\bigl(\widehat M_{\gamma_h}(t)
\widehat M_{\gamma_v}(t)\bigr).$$
Combining this identity with
\eqref{eq:direct-trace-tangency} gives
\[
\left.
\frac{\dd}{\dd t}\operatorname{Im} \mathcal{Z}(B(t))
\right|_{t=0}
=0,
\]
as required. The proof is complete.
\end{proof}

\begin{theorem}[Axisymmetric nondegeneracy]\label{thm:ax-nondeg}
For every $\rho\in(8\pi(n-1),8\pi n)$ and every axisymmetric solution
$u$, the kernel satisfies
\begin{equation}\label{eq:ax-kernel}
 \ker(\Delta+\e^u)\cap X_{\rm ax}=\{0\}.
\end{equation}
\end{theorem}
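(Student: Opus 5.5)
The plan is a proof by contradiction that assembles the results of Sections \ref{Sec 4} and \ref{sec:axisymmetric-nondegeneracy}. Fix $\rho=8\pi\eta$ with $\eta\in(n-1,n)$, so $\eta\notin\mathbb Z$. Let $u$ be an axisymmetric solution with accessory parameter $B_0=B_0(u)$. By Lemma \ref{symmetric lem}, $B_0\in\mathbb R$. Suppose, contrary to \eqref{eq:ax-kernel}, that there is $0\neq\phi\in X_{\rm ax}$ with $(\Delta+\e^u)\phi=0$.

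First, Proposition \ref{prop:Qconstant} gives $Q_\phi\equiv c$ for a real constant $c$, and Proposition \ref{prop:c-nonzero} gives $c\neq0$. The latter step uses $\eta\notin\mathbb Z$ to rule out a commuting monodromy group. Hence the exact Lam\'e path $q_t$ of \eqref{eq:exact-q-path-preview} has $B(t)=B_0-\frac c2 t$ with $\dot B(0)=-\frac c2\neq 0$. This path runs along the real $B$-axis, since $c$ and $B_0$ are real.

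Next, Theorem \ref{Key thm} applies to this $\phi$ and gives
\[
\frac{\dd}{\dd t}\operatorname{Im}\mathcal Z(B(t))\Big|_{t=0}=0 .
\]
Because $\mathcal Z$ is entire, the chain rule gives
\[
\frac{\dd}{\dd t}\operatorname{Im}\mathcal Z(B(t))\Big|_{t=0}=\operatorname{Im}\bigl(\mathcal Z'(B_0)\dot B(0)\bigr)=-\frac c2\operatorname{Im}\mathcal Z'(B_0).
\]
Here we use that $\dot B(0)$ is real. Moreover, for real $B$ we have $\operatorname{Im}\mathcal Z'(B_0)=\frac{\dd}{\dd B}\operatorname{Im}\mathcal Z(B)\big|_{B=B_0}$. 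Since $c\neq 0$, we conclude that $\frac{\dd}{\dd B}\operatorname{Im}\tr(M_h(B)M_v(B))\big|_{B=B_0}=0$.

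Finally, this contradicts the Lam\'e transversality of Theorem \ref{thm:transversality}, which asserts that this derivative is nonzero at every $B_0$ arising from a solution $u$. That theorem's hypotheses hold because $u$ is a solution, so its monodromy is unitary by \eqref{eq:traces0}. Therefore $\phi=0$, and \eqref{eq:ax-kernel} follows.

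The substantive work is already contained in Theorem \ref{Key thm} and Theorem \ref{thm:transversality}, so this proof is only an assembly. The one point I would check carefully is the bookkeeping between the $t$-derivative and the $B$-derivative. Specifically, the monodromy matrices $\widehat M_\gamma(t)$ used in Theorem \ref{Key thm} are conjugate to $M_\gamma(B(t))$, so the traces $\mathcal Z(B(t))$ agree, and $B(t)$ is a real affine reparametrization with nonzero speed. Both facts are stated explicitly in Step 4 of that proof and in \eqref{eq:exact-q-path-preview}, so I expect no obstacle there.
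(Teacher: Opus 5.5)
Your proposal is correct and follows the paper's own proof: assume a nonzero axisymmetric Jacobi field, use Propositions \ref{prop:Qconstant} and \ref{prop:c-nonzero} to get $Q_\phi=c\in\mathbb R\setminus\{0\}$, apply Theorem \ref{Key thm} along the real path $B(t)=B_0-\frac c2 t$, and contradict Theorem \ref{thm:transversality}. Your explicit chain-rule step only spells out the one-line identity $\frac{\dd}{\dd t}\operatorname{Im}\mathcal Z(B(t))\big|_{t=0}=-\frac c2\,\frac{\dd}{\dd B}\operatorname{Im}\mathcal Z(B)\big|_{B=B_0}$ that the paper uses; it relies on $\mathcal Z$ being entire and $\dot B(0)$ being real.
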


\begin{proof}
Assume for contradiction that there exists
$0\neq\phi\in X_{\rm ax}$ such that $(\Delta+e^u)\phi=0$.
By Propositions \ref{prop:Qconstant} and \ref{prop:c-nonzero},
\[
Q_\phi=\phi_{zz}-u_z\phi_z=c
\qquad\text{for some }c\in\mathbb R\setminus\{0\}.
\]
Let $B_0\in\mathbb R$ be the accessory parameter of $u$ and set
$B(t)=B_0-\frac c2t$. By Theorem \ref{Key thm},
\[
0=\frac{\dd}{\dd t}\mathrm{Im} \mathcal{Z}(B(t))\bigg|_{t=0}
=-\frac c2\frac{\dd}{\dd B}\mathrm{Im} \mathcal{Z}(B)\bigg|_{B=B_0}.
\]
Since $c\neq0$, this gives
\[
\frac{\dd}{\dd B}\mathrm{Im} \mathcal{Z}(B)\bigg|_{B=B_0}=0,
\]
a contradiction with
Theorem \ref{thm:transversality}.
\end{proof}

\section{Continuation and proof of the main theorem}\label{Sec 6}

In this final section, we complete the proofs of Theorems \ref{thm:main} and \ref{thm:Lame}.

\begin{proof}[Proof of Theorem \ref{thm:main}]    We need to show that the number of axisymmetric solutions is constant on
	the whole interval.  We include the functional-analytic and compactness
	details because the singular order depends on $\rho$. Recall the Green function $G(z)$ defined in \eqref{eq:Green}. Since $E_{\ii b}$ is a rectangular torus,
the Green function $G(z)$ is axisymmetric. Writing
\[
w:=u+\rho G,\qquad h_\rho:=e^{-\rho G},
\]
the singular Liouville equation \eqref{eq:mfe} becomes
\begin{equation}\label{eq:regularized-equation}
F(\rho,w):=
\Delta w+h_\rho e^w-\frac{\rho}{|E_{ib}|}=0.
\end{equation}
For every $0<\beta<1$, this defines a $C^1$ map
\[
F:(8\pi(n-1),8\pi n)\times C_{\rm ax}^{2,\beta}(E_{\ii b})
\longrightarrow C_{\rm ax}^{0,\beta}(E_{\ii b}).
\]
where $C_{\rm ax}^{k,\beta}(E_{\ii b})=C^{k,\beta}(E_{\ii b})\cap X_{\rm ax}$.   At a solution,
\[
D_wF(\rho,w)=\Delta+e^u.
\]
This operator is self-adjoint, Fredholm of index zero, and commutes
with the rectangular reflections. By Theorem \ref{thm:ax-nondeg}, its restriction to
the axisymmetric subspace has trivial kernel and is therefore an
isomorphism. The implicit function theorem consequently gives a unique
local $C^1$ solution branch through every axisymmetric solution.

We next record the properness. Let
\[
K\Subset(8\pi(n-1),8\pi n).
\]
By Theorem \ref{thm A},
\[
\|w\|_{C^2(E_{\ii b})}
=\|u+\rho G\|_{C^2(E_{\ii b})}
\le C_K
\]
for every solution with $\rho\in K$. Fix
$0<\beta<\beta_1<1$. Since
\[
h_\rho(z)=|z|^{\rho/(2\pi)}e^{-\rho R(z)}
\]
near $0$ and
$\inf_{\rho\in K}\rho/(2\pi)>4$, the family $h_\rho$ is uniformly
bounded in $C^{0,\beta_1}(E_{\ii b})$. Equation
\eqref{eq:regularized-equation} and the Schauder estimates therefore
give
\[
\|w\|_{C^{2,\beta_1}(E_{\ii b})}\le C_{K,\beta_1}.
\]
Hence the solution set over $K$ is compact in
$C_{\rm ax}^{2,\beta}(E_{\ii b})$.

It follows that the number
\[
N_{\rm ax}(\rho)
:=
\#\left\{
w\in C_{\rm ax}^{2,\beta}(E_{\ii b}):
F(\rho,w)=0
\right\}
\]
is locally constant. Indeed, each fiber is compact and, by the
implicit function theorem, discrete, hence finite. The finitely many
local solution branches through the fiber over $\rho_0$ contain every
solution for all $\rho$ sufficiently close to $\rho_0$; otherwise,
properness would produce an additional limiting solution in the fiber
over $\rho_0$. Thus $N_{\rm ax}$ is a constant on the connected interval
$(8\pi(n-1),8\pi n)$.

By the results mentioned in Remark \ref{rmk1-3}-(2), there exists
$\rho_0\in(8\pi(n-1),8\pi n)$ such that
equation \eqref{eq:mfe} has exactly $n$ solutions, which are all axisymmetric by Lemma \ref{lem:reflection-rigidity}. Therefore
\[
N_{\rm ax}(\rho)=n
\qquad
\text{for every }\rho\in(8\pi(n-1),8\pi n).
\]
Finally, Lemma \ref{symmetric lem} shows that every solution is axisymmetric. Hence
equation \eqref{eq:mfe} has exactly $n$ solutions for every
$\rho\in(8\pi(n-1),8\pi n)$.
\end{proof}

\begin{proof}[Proof of Theorem \ref{thm:Lame}]
This theorem follows directly from Theorem \ref{thm:main} and Corollary \ref{coro:11}.
\end{proof}

\subsection*{Acknowledgements}  Z. Chen is supported by National Key R\&D Program of China (No. 2023YFA1010002) and NSFC (No. 12222109). 
S. Zhang is supported by the Postdoctoral Fellowship Program and China Postdoctoral Science Foundation under Grant Numbers BX20250062 and 2026M793381.
The authors acknowledge the use of AI tools. All mathematical arguments and proofs in the final manuscript were written and checked by the authors.


\begin{thebibliography}{99}

\bibitem{BCLT} D.~Bartolucci, C.-C.~Chen, C.-S.~Lin, G.~Tarantello, \emph{Profile of blow-up solutions to mean
field equations with singular data}, Comm. Partial Differential Equations \textbf{29} (2004),
1241--1265. 

\bibitem{BMM} D.~Bartolucci, F.~De Marchis and A.~Malchiodi, \emph{Supercritical conformal metrics on surfaces with conical singularities}, Int. Math. Res. Not. \textbf{2011} (2011), 5625--5643.

\bibitem{BartolucciTarantelloCMP}
D.~Bartolucci and G.~Tarantello,
\emph{Liouville type equations with singular data and their applications
to periodic multivortices for the electroweak theory},
Comm. Math. Phys. \textbf{229} (2002), no.~1, 3--47.

\bibitem{BM} H.~Brezis and F.~Merle, \emph{Uniform estimates and blow-up behavior for
solutions of $-\Delta u=V(x)e^u$ in two dimensions}, Comm. Partial Differential Equations \textbf{16} (1991), 1223--1253.

\bibitem {CLMP}
E.~Caglioti, P. L.~Lions, C.~Marchioro and M.~Pulvirenti,
\emph{A special class of stationary flows for two-dimensional Euler
equations: a statistical mechanics description}, Comm. Math. Phys.
\textbf{143} (1992), 501--525.

\bibitem{CM} A.~Carlotto and A.~Malchiodi, \emph{Weighted barycentric sets and singular Liouville equations on compact surfaces}, J. Funct. Anal. \textbf{262} (2012), 409--450. 

\bibitem{ChaiLinWang}
C.-L.~Chai, C.-S.~Lin, and C.-L.~Wang,
\emph{Mean field equations, hyperelliptic curves and modular forms: I},
Camb. J. Math. \textbf{3} (2015), no.~1--2, 127--274.

\bibitem{ChenLinCPAM}
C.-C.~Chen and C.-S.~Lin,
\emph{Mean field equation of Liouville type with singular data:
topological degree},
Comm. Pure Appl. Math. \textbf{68} (2015), no.~6, 887--947.



\bibitem {CLW4}C.-C.~Chen, C.-S.~Lin and G. Wang; \emph{Concentration
phenomena of two-vortex solutions in a Chern-Simons model}, Ann. Scuola Norm.
Sup. Pisa Cl. Sci. (5) \textbf{3} (2004), 367--397.

\bibitem{ChenLinAJM}
Z.~Chen and C.-S.~Lin,
\emph{Sharp nonexistence results for curvature equations with four
singular sources on rectangular tori},
Amer. J. Math. \textbf{142} (2020), no.~4, 1269--1300.

\bibitem{ChenLinJDG}
Z.~Chen and C.-S.~Lin,
\emph{Exact number and non-degeneracy of critical points of multiple Green
functions on rectangular tori},
J. Differential Geom. \textbf{118} (2021), no.~3, 457--485.

\bibitem{DKM} M.~del Pino, M.~Kowalczyk and M.~Musso, \emph{Singular limits in Liouville-type equations}, Cal. Var.
Partial Differential Equations \textbf{24} (2005), 47--81.



\bibitem{Eastham}
M.~S.~P.~Eastham,
\emph{The Spectral Theory of Periodic Differential Equations},
Scottish Academic Press, Edinburgh, 1973.

\bibitem{Eremenko} A.~Eremenko,  \emph{Metrics of constant positive curvature with four conic singularities on the sphere},
Proc. Amer. Math. Soc. \textbf{148} (2020), 3957--3965.

\bibitem{EG}
A.~Eremenko and A.~Gabrielov, \emph{Spherical Rectangles},
Arnold Math. J. \textbf{2} (2016), 463--486.

\bibitem{EMP}
A.~Eremenko, G.~Mondello, and D.~Panov,
\emph{Moduli of spherical tori with one conical point},
Geom. Topol. \textbf{27} (2023), no.~9, 3619--3698.

\bibitem {GW-Acta}F.~Gesztesy and R.~Weikard; \emph{Picard potentials and Hill's
equation on a torus}. Acta Math. \textbf{176} (1996), 73--107.

\bibitem{GilbargTrudinger}
D.~Gilbarg and N.~S.~Trudinger,
\emph{Elliptic Partial Differential Equations of Second Order},
Classics in Mathematics, Springer-Verlag, Berlin, 2001.

\bibitem{GoldmanFricke}
W.~M. Goldman,
\emph{Trace coordinates on Fricke spaces of some simple
hyperbolic surfaces},
in Handbook of Teichm\"uller Theory, Vol.~II,
IRMA Lect. Math. Theor. Phys. \textbf{13},
European Mathematical Society, Z\"urich, 2009,
611--684.

\bibitem{HornJohnsonTopics}
R.~A. Horn and C.~R. Johnson,
\emph{Topics in Matrix Analysis},
Cambridge University Press, Cambridge, 1991.


\bibitem{JWY}  A.~Jevnikar, J.~Wei and W.~Yang, \emph{On the topological degree of the mean field equation with two parameters}, Indiana Univ. Math. Journal \textbf{67} (2018), 29--88.

\bibitem{Kuo} T.-J. Kuo, \emph{Sharp results for spherical metric on flat tori with conical angle $6\pi$ at two symmetric points},
J. Differential Geom. \textbf{133} (2026), 275--334.

\bibitem{LinJDG} C.-S.~Lin, \emph{Spherical metrics with one singularity and odd integer angle on flat tori, I}. J. Differ. Geom. \textbf{132} (2026),  321--382.

\bibitem{LinWangAnnals}
C.-S.~Lin and C.-L.~Wang,
\emph{Elliptic functions, Green functions and the mean field equations
on tori},
Ann. of Math. (2) \textbf{172} (2010), no.~2, 911--954.


\bibitem{LinWang}
C.-S.~Lin and C.-L.~Wang,
\emph{On the minimality of extra critical points of Green functions on
flat tori},
Int. Math. Res. Not. 2017 (2017), no.~18, 5591--5608.

\bibitem{LinWangJEP}
C.-S.~Lin and C.-L.~Wang,
\emph{Mean field equations,
Hyperelliptic curves, and Modular forms: II. With Appendix A written by Y.C.
Chou}, J. \'{E}c. polytech. Math. \textbf{4} (2017), 557--593.


\bibitem {LY}C.-S.~Lin and S.~Yan, \emph{Existence of bubbling solutions for
Chern-Simons model on a torus.} Arch. Ration. Mech. Anal. \textbf{207} (2013), 353--392.

\bibitem{MZ-IMRN} R.~Mazzeo and X.~Zhu, \emph{Conical metrics on Riemann surfaces, II: Spherical metrics},
Int. Math. Res. Not. \textbf{2022} (2022), 9044--9113.

\bibitem{MP-IMRN} R. Mondello and D. Panov, \emph{Spherical metrics with conical singularities on
a 2-sphere: angle constraints}, Int. Math. Res. Not. \textbf{2016} (2016), 4937--4995.

\bibitem{MP-GAFA} G. Mondello and D. Panov, \emph{Spherical surfaces with conical points: systole inequality
and moduli spaces with many connected components}, Geom. Funct. Anal. \textbf{29} (2019),
1110--1193.

\bibitem {NT1}M.~Nolasco and G.~Tarantello; \emph{Double vortex condensates
in the Chern-Simons-Higgs theory.} Calc. Var. PDE. \textbf{9} (1999), 31--94.

\bibitem{MagnusWinkler}
W.~Magnus and S.~Winkler,
\emph{Hill's Equation},
Interscience Tracts in Pure and Applied Mathematics, No.~20,
Interscience Publishers, New York, 1966.

\bibitem{MR} A.~Malchiodi and D.~Ruiz, \emph{New improved Moser-Trudinger inequalities and singular Liouville equations on compact surfaces}, Geom. Funct. Anal. \textbf{21} (2011), 1196--1217.

\bibitem{McLaughlinNabelek}
K.~T.-R.~McLaughlin and P.~V.~Nabelek,
\emph{A Riemann--Hilbert problem approach to infinite gap Hill's operators
and the Korteweg--de Vries equation},
Int. Math. Res. Not. IMRN 2021 (2021), no.~2, 1288--1352.


\bibitem{Tarantello} G.~Tarantello, \emph{A Harnack inequality for Liouville-type equations with singular sources}, Indiana
Univ. Math. J. \textbf{54} (2005), 599--615. 

\bibitem{WWX} Z.~Wei, Y.~Wu and B.~Xu, 
\emph{Geometric structure and existence of reducible spherical conical metrics},
Math. Ann. \textbf{395} (2026), Paper No. 2, 48 pp.

\end{thebibliography}
\end{document}